\documentclass[a4paper,reqno, 11pt]{amsart}

\usepackage{amsmath}
\usepackage{paralist}
\usepackage{graphics}
\usepackage{epsfig}
\usepackage{amsfonts}
\usepackage{amssymb}
\usepackage{color}
\usepackage{hyperref}
\usepackage{epstopdf}
\usepackage{bm}

\allowdisplaybreaks

\topmargin=-0.20in
\textheight=24cm
 \textwidth=15cm
\oddsidemargin=0.25in
\evensidemargin=0.25in

\setcounter{page}{1}

\newtheorem{theorem}{Theorem}[section]
\newtheorem{lemma}[theorem]{Lemma}

\def\ifl{\iffalse }

\def\bc{\begin{center}}       \def\ec{\end{center}}
\def\ba{\begin{array}}        \def\ea{\end{array}}
\def\be{\begin{equation}}     \def\ee{\end{equation}}
\def\bea{\begin{eqnarray}}    \def\eea{\end{eqnarray}}
\def\beaa{\begin{eqnarray*}}  \def\eeaa{\end{eqnarray*}}

\numberwithin{equation}{section}

\newtheorem{remark}[theorem]{Remark}
\numberwithin{equation}{section}

\begin{document}
\author{Huicong Li}
\address{School of Mathematics, Sun Yat-sen University, Guangzhou, 510275, Guangdong Province, China}
\email{lihuicong@mail.sysu.edu.cn}
\author{Tian Xiang$^\dagger$}
\address{Institute for Mathematical Sciences, Renmin University of China, Bejing, 100872, China}
\email{txiang@ruc.edu.cn}
\thanks{$^\dagger$ Corresponding author.}

\title[On a cross-diffusive SIS epidemic model with nonlinear incidence]{On a cross-diffusive SIS epidemic model with power-like nonlinear incidence}

\begin{abstract}
We study global existence, boundedness and convergence of nonnegative classical solutions of a Neumann initial-boundary value problem for the following cross-diffusive SIS (susceptible-infected-susceptible) epidemic model with power-like infection mechanism generalizing the standard mass action mechanism:
$$
\begin{cases}
S_t=d_S \Delta S+\chi\nabla\cdot( S \nabla I)-\beta S^ q I^ p +\gamma I,& x\in\Omega,\, t>0,\\[0.25cm]
I_t=d_I\Delta I+\beta  S^ q I^ p -(\gamma+\mu) I,& x\in\Omega,\, t>0,
\end{cases}
$$
in a bounded smooth domain $\Omega\subset\mathbb{R}^n \ (n\geq 1)$. The incidence of the form $\beta S^q I^p$ with $p, \, q>0$ is a natural extension of the classical mass action type $\beta SI$, and the cross-diffusive term $\chi \nabla \cdot (S \nabla I)$ with $\chi\geq 0$ describes the effect that the susceptible individuals tend to move away from higher concentration of infected populations. Global existence and boundedness of classical solutions are established in certain parameter ranges, and threshold/non-threshold long-time behaviors of global bounded solutions are also detected. Our findings significantly  improve and extend  previous related studies.
\end{abstract}

\subjclass[2010]{35K57, 35Q92, 35A01, 35B40, 92D30}

\keywords{epidemic models, cross-diffusion, global existence and boundedness, threshold dynamics, non-threshold dynamics.}

\maketitle

\numberwithin{equation}{section}

\section{Introduction}

In the theory of epidemiology, incidence is the occurrence of new cases of a disease via contact between infected individuals and susceptible ones within a specified period of time, and it is usually characterized by the infection mechanism in the modeling of infectious diseases. In the seminal work of Kermack and McKendrick \cite{K-M}, the authors studied a susceptible-infected-recovered compartmental model with bilinear incidence rate $\beta SI$, where $S$ and $I$, respectively, are the numbers of susceptible and infected populations, and the positive constant $\beta$ measures the disease transmission rate. However, as argued in \cite{deJong, McCallum}, this type of bilinear infection mechanism, describing the simple mass action law, has its own shortcomings and may not fit the field data well. Thus, various different types of transmission mechanism were proposed and investigated, such as the general nonlinear transmission rate of the form $\beta S^q I^p$ with $p$, $q>0$ \cite{Hethcote, Koro, Lei1, Lei2, MLi, Liu1, Liu2, PW19, Severo}, the standard incidence type of $\beta \frac{SI}{S+I}$ \cite{Allen1, Allen2, Cui1, CLPZ, Cui2, Kuto, LiLiTong, LiPeng, LiPengWang-JDE, Peng, Peng-Liu, PengYi, PengZhao}, the saturating incidence rate $\beta\frac{SI}{1+mI}$ with parameter $m>0$ \cite{Capasso-Serio, Cui21, Hu, Liu-Cui, Liu, Raja, Wang, Xu}, etc; see also references therein for many other types of nonlinear infection mechanisms.

In this paper, inspired mainly from \cite{DW16, LPX18, PW19}, we are interested in the dynamical properties of solutions to the following homogeneous Neumann IBVP  (initial-boundary value problem)  of  (possibly) cross-diffusive SIS epidemic model with generalized  mass action infection mechanism:
\be\label{SIS-mass-C}
\begin{cases}
S_t=d_S \Delta S+\chi\nabla\cdot( S \nabla I)-\beta S^ q I^ p +\gamma I,& x\in\Omega,\, t>0,\\[0.25cm]
I_t=d_I\Delta I+\beta  S^ q I^ p -(\gamma+\mu) I,& x\in\Omega, \, t>0,\\[0.25cm]
\frac{\partial S}{\partial \nu}=\frac{\partial I}{\partial \nu}=0, & x\in \partial \Omega, \, t>0,\\[0.25cm]
S(x,0)=S_0(x),  \ \   I(x,0)= I_0(x), & x\in \Omega.
\end{cases}
\ee
Here and below, $\Omega\subset\mathbb{R}^n~(n\geq 1)$ is a bounded domain with smooth boundary $\partial\Omega$. The unknown functions
$S=S(x,t)$ and $I=I(x,t)$, respectively, denote the population density of susceptible and infected individuals at location $x$ and time $t$; $d_S$ and $d_I$ are positive constants measuring the random mobility of susceptible and infected populations respectively; the featured cross-diffusion term $\chi \nabla \cdot (S\nabla I)$ describes the biased  movement (widely known as repulsive chemotaxis effect in chemotactic models) of those \lq\lq smart" susceptible individuals  who  tend to stay away from regions of  higher concentration of infected individuals  with nonnegative constant $\chi$ measuring the strength  of such effect; positive constants $\beta$ and $\gamma$, respectively, stand for the disease transmission rate and recovery rate, whereas the parameter $\mu \geq 0$ is the disease-induced death rate. The homogeneous Neumann boundary conditions imply there is no population flux across the boundary $\partial \Omega$. As explained in \cite{PW19}, we assume within this paper that the initial datum $(S_0,I_0)$ satisfies
 \be\label{initial data-req}
\begin{cases}
S_0\geq 0, \   I_0\geq, \, \not\equiv 0, \   (S_0,I_0)\in C^0(\overline{\Omega})\times W^{1,r_0}(\Omega) \text{ for  some } r_0>\max\{2,n\}, \\[0.2cm]
\inf_{\Omega}S_0>0,  \ \  \text{ if  } 0<q<1, \\[0.2cm]
\inf_{\Omega}I_0>0, \ \  \text{ if  } 0<p<1.
\end{cases}
\ee

The PDE model \eqref{SIS-mass-C} with $\chi=0$, $q=p=1$ and $\mu=0$ was proposed and studied by Deng and Wu \cite{DW16}, wherein they established the global attractiveness of the DFE (disease-free equilibrium) and EE (endemic equilibrium) in some special cases, and the existence (and uniqueness) of EE in a heterogeneous environment was also discussed when the basic reproduction number is greater than one. The asymptotic behavior of EE with respect to diffusion rates $d_S$ and $d_I$ was investigated later in \cite{Wu-Zou}; see also further development in \cite{Salako, Wen, LiPengWang}. Recently, Peng and Wu \cite{PW19} studied the reaction-diffusion model \eqref{SIS-mass-C} without cross-diffusion ($\chi=0$) and they obtained the global existence and boundedness of classical solutions for any $p, \, q>0$ by using a duality argument, and then,  they derived long time behaviors of solutions for  $\mu>0$ and uniform persistence for $\mu=0$ upon establishing the ultimately uniform boundedness. More dynamics on many special cases of the reaction-diffusion model \eqref{SIS-mass-C} without cross-diffusion can be found in the introduction of \cite{PW19}.

Due to the strong coupling in the (highest) 2nd order term, in study of chemotaxis systems, it is well-known that the chemotactic cross-diffusion $\chi \nabla \cdot (S \nabla I)$ has  a strong tendency towards driving the solutions of the underlying models to blow-up in finite or infinite time; see \cite{BBTW15, Win-blow1, Win-blow2} for example. Compared to the extensive literature on various chemotactic systems, it seems that epidemic models with cross-diffusive terms were much less studied. To the best of our knowledge, we are only aware of very few papers \cite{Wang2, LPX18, Wang1} dealing with such mathematical models (see \cite{BPTW19,  Tao-Win21, Win19}, etc, for May-Nowak type cross-diffusive models for virus infection). Moreover, it should be noted that in these works, the infection mechanism is taken to be the standard incidence type of the form $\beta\frac{SI}{S+I}$ (in SIS models) or $\beta \frac{SI}{S+I+R}$ (in SIRS models). From the mathematical point of view, such frequency-dependent transmission term grows at most linearly in $I$, and thus one can take full advantage of this property to derive not only the $L^\infty$-bound of $I$ from its $L^1$-bound, but also the $L^\infty$-bound of $\nabla I$. As a result, global existence and boundedness of solutions can be established in \cite{Wang2, LPX18, Wang1} in any spatial dimensions, regardless of the magnitude of $\chi$. Nevertheless, in our cross-diffusive model \eqref{SIS-mass-C}  with power-like nonlinear incidence $\beta S^q I^p$, the above strategy is no longer available, and the duality argument  used in \cite{PW19} also fails because of the presence of the chemotactic cross-diffusion. Even when $p=q=1$, performing similar yet lengthy computations as done in \cite{TW12-JDE, Win12-CPDE, Xiang16-ppt} aimed  to cancel out the chemotaxis involving term from \eqref{SIS-mass-C}, we find that
\be\label{S+gradI/I-id}
\begin{split}
&\frac{d}{dt}\int_\Omega \left[2\beta S(\ln S-1)+2\chi \left|\nabla I^\frac{1}{2}\right|^2\right] + 8\beta d_S\int_\Omega \left|\nabla S^\frac{1}{2}\right|^2\\
&\quad + 4(\gamma+\mu)\chi\int_\Omega \left|\nabla I^\frac{1}{2}\right|^2 + 2d_I\chi\int_\Omega I \left|D^2\ln I\right|^2 \\
&=-2\beta^2\int_\Omega SI \ln S+2\beta\gamma\int_\Omega I \ln S+d_I\chi\int_{\partial\Omega} \frac{1}{I} \frac{\partial \left|\nabla I\right|^2}{\partial\nu} + \beta\chi \int_\Omega \frac{S}{I}\left|\nabla I\right|^2.
\end{split}
\ee
Thus,  the chemotactic cross-diffusion induced term has been offset but the last term seems uncontrollable in terms of the dissipative terms on the left-hand-side of \eqref{S+gradI/I-id}. Even facing such challenges (strong -no saturation- chemotactic cross-diffusion in the second order term  and strong nonlinear coupling in the zeroth order term), we shall employ subtle energy estimates and/or semi-group arguments to bootstrap the easily obtained a priori $L^1$-boundedness  to $L^\infty$-boundedness of classical solutions to the IBVP \eqref{SIS-mass-C} under either
$$
np+ \left(n-2\right)^+q<n+\min\left\{n, \  2\right\}, \ \ \  |\chi|<\chi_0 \  \text{with} \ \chi_0 \  \text{defined by}\  \eqref{chi-max},
$$
or
\be\label{thm-con+0}
 q<\frac{1}{n+1}\quad \mbox{and} \quad \ p+(n+1)q < 1+ \min\left\{1,\frac{2}{n}\right\},
\ee
or
\be\label{kappa-lambda-con0}
\begin{cases}
 10 q+4 p <15, \ \ \ \  q+ p <3,  & \text{ if } n=1, \\[0.25cm]
  3 q+ p <3, \ \ \ \  q+ p <2,  & \text{ if } n=2.
  \end{cases}
\ee
\textbf{We here underline that, under either  \eqref{thm-con+0} or \eqref{kappa-lambda-con0}, boundedness of classical solutions to the IBVP \eqref{SIS-mass-C} is ensured without any smallness assumption on the initial data; see Theorems \ref{glob bdd-semi1}, \ref{th+-semi-} and \ref{glob bdd3} for details}.

Going beyond boundedness, the second part of this project is devoted to  long-time behaviors of global bounded solutions to  the IBVP  \eqref{SIS-mass-C}. \textbf{In the presence  of  mortality, i.e.,  $\mu>0$,  it is shown in Theorem \ref{thm-lt1}  that the usual threshold dynamics for epidemic models in terms of a so-called basic reproduction number do not exist}. More specifically,  as $t\rightarrow \infty$, any bounded global solution $(S,I)$ converges   uniformly on $\overline{\Omega}$  according to
\be
\nonumber
\left(S(\cdot,t),I(\cdot,t)\right)\rightarrow \begin{cases}(0,0),  &\text{ if }  0<p<1, \\[0.2cm]
(S_\infty,0),  &\text{ if }  p\geq 1.
\end{cases}
\ee
Furthermore, $S_\infty\leq \left(\frac{\gamma + \mu }{\beta}\right)^{\frac{1}{q}}$ if $p=1$ and $(S, I)\rightarrow (S_\infty,0)$ exponentially if $p>1$. Here, $S_\infty$ is a positive constant  and is given implicitly by
\be\label{S*-def0}
S_\infty=|\Omega|^{-1}\left(\int_\Omega \left(S_0+I_0\right)-\mu\int_0^\infty\int_\Omega I\right).
\ee
 Comparing with \cite[Theorem 2.4]{PW19}, we see that the dynamics of the cross-diffusive system stay  the same as that of the system without cross-diffusion, indicating  the effect of chemotaxis is overbalanced by mortality in respective of long time behavior. \textbf{Here, we would like to add that, we use a different approach to derive such refined convergence by identifying $S_\infty$ in \eqref{S*-def0} and showing that the convergence is exponential when $p>1$.}

\textbf{In the absence  of  mortality and of cross-diffusion, i.e.,  $\mu = \chi =0$, allowing $\beta$ and $\gamma$ to be spatially-dependent, via the basic reproduction number, threshold dynamics of global solutions to  \eqref{SIS-mass-C} with  $p=q=1$  was shown  in \cite{DW16} in two special cases,  and only  uniform persistence of global solutions is shown in \cite{PW19}. In the above scenario, we strengthen the results in \cite{PW19}}.  More specifically,  for $0<p<1$, we  provide non-threshold dynamics of global and bounded solutions; that is, we prove the global attractivity of the unique constant or non-constant  EE:
 \begin{itemize}
 \item[(C1)] Assume that $0<p<1$ and $\gamma(x)\equiv r\beta(x)$ on $\overline{\Omega}$ for some $r>0$. Then,  as $t\to\infty$,
\be
\left(S(\cdot,t),I(\cdot,t)\right) \to (S^*,I^*)
\nonumber
\ee
uniformly on $\overline\Omega$, where $(S^*, I^*)$ is the constant EE defined by \eqref{S-p1}.
  \item[(C2)] When  $\frac{\gamma}{\beta}\not\equiv \text{const}$, assume that $0<p<1$ and $d_S=d_I$. Then,  as $t\to\infty$,
\be
\left(S(\cdot,t),I(\cdot,t)\right) \to (\tilde S,  \tilde I)
\nonumber
\ee
uniformly on $\overline\Omega$, where $(\tilde S,  \tilde I)$ is the positive non-constant  EE of  \eqref{SIS-mass-C} with $\chi=0$.
 \end{itemize}
 While in the case of $p=1$, we establish threshold dynamics in terms of the basic reproduction number $\mathcal{R}_0$ defined by \eqref{R0}:
  \begin{itemize}
 \item[(C3)] Assume that $p=1$ and $\gamma(x)\equiv r\beta(x)$ on $\overline{\Omega}$ for some $r>0$. Then,  as $t\to\infty$,
\begin{align*}
\left(S(\cdot,t),I(\cdot,t)\right)\rightarrow \begin{cases} \left(\frac{N}{|\Omega|}, 0\right), &\text{if } \mathcal{R}_0\leq 1\Longleftrightarrow \frac{N}{|\Omega|}\leq r^\frac{1}{q},\\[0.25cm]
\left(r^\frac{1}{q}, \frac{N}{|\Omega|}-r^\frac{1}{q}\right), &\text{if } \mathcal{R}_0> 1\Longleftrightarrow \frac{N}{|\Omega|}> r^\frac{1}{q},
\end{cases}
\end{align*}
uniformly on $\overline\Omega$, where $N=\int_\Omega (S_0+I_0)$ is the conserved total population.
  \item[(C4)]When  $\frac{\gamma}{\beta}\not\equiv \text{const}$, assume that $p=1$ and $d_S=d_I$. Then,  as $t\to\infty$,
\begin{align*}
\left(S(\cdot,t),I(\cdot,t)\right)\rightarrow \begin{cases} \left(\frac{N}{|\Omega|}, 0\right), &\text{if } \mathcal{R}_0\leq 1,\\[0.25cm]
(\tilde S,  \tilde I), &\text{if } \mathcal{R}_0> 1,
\end{cases}
\end{align*}
uniformly on $\overline\Omega$, where $(\tilde S,  \tilde I)$ is the positive non-constant  EE of  \eqref{SIS-mass-C} with $\chi=0$.
 \end{itemize}
 See Theorems \ref{thm-glo1}, \ref{thm-glo2}, \ref{thm-glo3} and \ref{thm-glo4} for more details.
Particularly, \textbf{in the special case that both $\beta$ and $\gamma$ are positive constants, for all $0< p \leq 1$ and $q>0$, we provide a complete description of the long-time behavior of solutions to system \eqref{SIS-mass-C} with $\chi=0$}.
Here, we would like to mention that the global asymptotic stability of DFE or EE in general case seems to be rather challenging, which remains largely open even for much simpler SIS epidemic models than  \eqref{SIS-mass-C}. We also mention, besides (C1)-(C4),   many of our other results still hold if $\beta$ and $\gamma$ are spatial-temporally dependent, as long as they are H\"{o}lder continuous and bounded from below and above by positive constants.

The remainder of this project is organized as follows. In Section 2, we present some preliminary results and the local well-posedness of our model. Section 3 is devoted to the global existence and boundedness of solutions where we prove a couple of results based on different approaches. Finally, in the last section, we shall investigate the large-time convergence of bounded and global classical solutions.

For convenience and simplicity, in the sequel, the symbols $C$, $C_i(i=1,2,\cdots)$ or $C_\epsilon$  will denote  generic positive constants which are independent of time and may vary line-by-line. The spatial  integration symbol $dx$ will also  be omitted when no confusion could arise.

\section{Preliminary and local well-posednesss}

Let us start with the elementary  Young's inequality with epsilon: Let $r$  and $s$ be two given positive numbers with $\frac{1}{r}+\frac{1}{s}=1$. Then, for any  $\epsilon>0$,
$$
ab\leq \epsilon a^r+\frac{b^s}{(\epsilon r)^\frac{s}{r}},  \quad \quad \forall\, a,b\geq 0.
$$

In several places, we need the well-known  Gagliardo-Nirenberg interpolation inequality, we state it  here for the convenience of reference.
\begin{lemma}[Gagliardo-Nirenberg  interpolation inequality \cite{Fried,Nirenberg66}]
\label{GNI}
Let $\Omega\subset\mathbb{R}^n \ (n\geq 1)$ be a bounded and  smooth domain. Let $l$ and $k$ be integers satisfying $0\leq l<k$, and let $1\leq s,\, r\leq \infty$,   $0<m\leq \infty$ and $\frac{l}{k}\leq a\leq 1$ such that
$$
\frac{1}{m}-\frac{l}{n}=a\left(\frac{1}{s}-\frac{k}{n}\right)+(1-a)\frac{1}{r}.
$$
Then, for any   $f\in W^{k,s}(\Omega)\cap L^r(\Omega)$, there exist a positive constant $C$ depending only on $\Omega,k,r,s$ and $n$ such that
\begin{equation}\label{G-N-I}
\|D^l f\|_{L^m}\leq C \left(\|D^k f\|_{L^s}^a\|f\|_{L^r}^{1-a}+\|f\|_{L^r}\right),
\end{equation}
with the following exception: if $1<s<\infty$ and $k-l-\frac{n}{s}$ is a nonnegative integer, then $\eqref{G-N-I}$ holds only  for $a$ satisfying $\frac{l}{k}\leq a<1$.
\end{lemma}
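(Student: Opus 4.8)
The plan is to reduce the stated inequality on the bounded smooth domain $\Omega$ to the corresponding \emph{homogeneous} inequality on the whole space $\mathbb{R}^n$, to prove the latter by interpolation anchored at the Gagliardo--Nirenberg--Sobolev embedding, and finally to transfer it to $\Omega$ by an extension operator. Since this is a classical fact, in the paper one would simply cite \cite{Fried,Nirenberg66}; the sketch below is the route those references take.

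First I would treat the whole-space first-order case $l=0$, $k=1$: for $u\in W^{1,s}(\mathbb{R}^n)\cap L^r(\mathbb{R}^n)$,
$$\|u\|_{L^m(\mathbb{R}^n)}\le C\,\|\nabla u\|_{L^s(\mathbb{R}^n)}^{a}\,\|u\|_{L^r(\mathbb{R}^n)}^{1-a},\qquad \frac1m=a\Bigl(\frac1s-\frac1n\Bigr)+(1-a)\frac1r.$$
The starting point is the endpoint estimate $\|v\|_{L^{n/(n-1)}(\mathbb{R}^n)}\le\prod_{i=1}^n\|\partial_i v\|_{L^1(\mathbb{R}^n)}^{1/n}\le C\|\nabla v\|_{L^1}$, obtained by writing $v(x)$ as a line integral of $\partial_i v$ and applying the Loomis--Whitney multilinear H\"older inequality (equivalently, induction on $n$). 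Applying this to $v=|u|^{t}$ with $t$ chosen so the powers match and using H\"older to split the resulting norm yields the homogeneous Sobolev bound $\|u\|_{L^{p^*}}\le C\|\nabla u\|_{L^p}$ when $s<n$; a further H\"older interpolation against $\|u\|_{L^r}$ gives the displayed inequality, while the cases $s=n$ (all finite $m$) and $s>n$ (Morrey, $m=\infty$) are handled separately, with $a<1$ forced precisely where the borderline embedding just fails.

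Next, for higher orders $1\le l<k$, I would iterate. Applying the first-order inequalities to the components of $D^{l}f$ controls $\|D^{l}f\|_{L^m}$ in terms of $\|D^{l+1}f\|^{\theta}\|D^{l-1}f\|^{1-\theta}$, and then a finite induction on the number of derivatives, combined with the elementary convexity estimate $\|D^{j}f\|\le\varepsilon\|D^{j+1}f\|+C_\varepsilon\|D^{j-1}f\|$, collapses everything onto $\|D^{k}f\|_{L^s}$ and $\|f\|_{L^r}$; the dimensional-scaling identity linking $m,l,k,s,r,n$ and $a$ is forced at each step and is what pins down the exponent $a$. The genuinely delicate point, and the main obstacle, is the stated exception: when $1<s<\infty$ and $k-l-\tfrac{n}{s}\in\mathbb{Z}_{\ge 0}$, the pertinent Sobolev embedding is the critical one (the analogue of $W^{1,n}\not\hookrightarrow L^\infty$), so the inequality can hold only for $a<1$; this requires replacing the endpoint step by a logarithmic/BMO-type refinement, or exhibiting the explicit counterexample that kills $a=1$, exactly as in Nirenberg's original argument.

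Finally, to pass from $\mathbb{R}^n$ to the bounded smooth domain $\Omega$, I would use a bounded linear extension operator $E:W^{k,s}(\Omega)\to W^{k,s}(\mathbb{R}^n)$ that is simultaneously bounded $L^r(\Omega)\to L^r(\mathbb{R}^n)$ (available since $\partial\Omega$ is smooth, e.g.\ a higher-order reflection extension or Stein's extension), apply the whole-space inequality to $Ef$ and restrict to $\Omega$, and then absorb the intermediate derivative norms of $Ef$ that appear into $\|D^{k}f\|_{L^s(\Omega)}$ and $\|f\|_{L^r(\Omega)}$ via the same convexity-of-derivatives estimate. This absorption is exactly why the additive lower-order term $+\,\|f\|_{L^r}$ is unavoidable in $\eqref{G-N-I}$, in contrast with the homogeneous whole-space version, which fails on $\Omega$ already because of constants.
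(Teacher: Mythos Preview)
Your proposal is essentially correct and, as you yourself anticipated, the paper does not prove this lemma at all: it is stated as a classical result with citations to \cite{Fried,Nirenberg66} and used as a black box throughout. Your sketch of the standard route (whole-space first-order case via Loomis--Whitney/Sobolev, iteration to higher orders, then extension to the bounded domain) is a faithful outline of the arguments in those references, so there is nothing to compare against in the paper itself.
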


Henceforth,  for notational convenience, we write $\|f\|_{L^p}$ for the usual $L^p(\Omega)$-norm of $f$.

In our subsequent analysis, we shall  apply the commonly used smoothing $L^r$-$L^s$ estimates  of the Neumann heat semi-group $\left(e^{td\Delta}\right)_{t\geq0}$ on a bounded and smooth domain $\Omega$; see, for instance,  \cite{Cao15, LPX18, Win10-JDE}. Again, we list them here for direct reference.
\begin{lemma}
\label{semigroup}
For $d>0$, let $\left(e^{td\Delta}\right)_{t\geq0}$ be the Neumann heat semi-group and $ \lambda_1=: \lambda_1(d)>0$ be the first positive Neumann eigenvalue of $-d\Delta$ on $\Omega$. Then there exist some positive constants $k_i$ {\rm (}$i=1,2,3,4${\rm )} depending only on $d$ and $\Omega$ fulfilling the following smoothing $L^r$-$L^s$ type estimates:
\begin{enumerate}
\item[{\rm (i)}] If $1\leq s\leq r\leq \infty$, then, for any $f\in L^s(\Omega)$,
\begin{equation}
\nonumber
\left\|e^{td\Delta}f\right\|_{L^r}\leq k_1 \left(1+t^{-\frac{n}{2}\left(\frac{1}{s}-\frac{1}{r} \right)} \right)\left\|f\right\|_{L^s},\quad\forall\, t>0.
\end{equation}

\item[{\rm (ii)}] If $1\leq s\leq r\leq \infty$, then, for all $f\in L^s(\Omega)$,
\begin{equation}
\nonumber
\left\|\nabla e^{td\Delta}f\right\|_{L^r}\leq k_2 \left(1+t^{-\frac{1}{2}-\frac{n}{2}\left(\frac{1}{s}-\frac{1}{r} \right)} \right)e^{-\lambda _1t}\left\|f\right\|_{L^s},\quad\forall\, t>0.
\end{equation}

\item[{\rm (iii)}] If $2\leq s\leq r<\infty$, then,  for all $f\in W^{1,s}(\Omega)$,
\begin{equation}
\nonumber
\left\|\nabla e^{td\Delta}f\right\|_{L^r}\leq k_3 \left(1+t^{-\frac{n}{2}\left(\frac{1}{s}-\frac{1}{r} \right)} \right)e^{- \lambda_1t}\left\|\nabla f\right\|_{L^s},\quad\forall\, t>0.
\end{equation}

\item[{\rm (iv)}] If $1<s\leq r\leq\infty$, then, for all $f\in \left(L^s(\Omega)\right)^n$,
\begin{equation}
\nonumber
\left\|e^{td\Delta}\nabla \cdot f\right\|_{L^r}\leq k_4 \left(1+t^{-\frac{1}{2}-\frac{n}{2}\left(\frac{1}{s}-\frac{1}{r} \right)} \right)e^{- \lambda_1t}\left\| f\right\|_{L^s},\quad\forall\, t>0.
\end{equation}
\end{enumerate}
\end{lemma}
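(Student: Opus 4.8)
The plan is to derive all four estimates from two classical ingredients --- sharp pointwise bounds on the Neumann heat kernel $G(x,y,t)$ and the exponential $L^2$-decay of $e^{td\Delta}$ on the orthogonal complement of the constants --- so I will only lay out the structure; full proofs are in \cite{Win10-JDE} (see also \cite{Cao15, LPX18}). First I would rescale time by $t\mapsto t/d$, which turns $e^{td\Delta}$ into $e^{t\Delta}$ and $\lambda_1(d)$ into $\lambda_1(1)=:\lambda_1$, reducing everything to $d=1$, and write $(e^{t\Delta}f)(x)=\int_\Omega G(x,y,t)f(y)\,dy$ and $(\nabla e^{t\Delta}f)(x)=\int_\Omega\nabla_xG(x,y,t)f(y)\,dy$. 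Then I would record three facts: (a) $G\ge0$ and $\int_\Omega G(x,y,t)\,dy\equiv1$, so $e^{t\Delta}$ is a contraction on each $L^p(\Omega)$ and $\int_\Omega\nabla_xG(x,y,t)\,dy\equiv0$, i.e.\ $\nabla e^{t\Delta}$ kills constants; (b) the Aronson/Davies-type bounds $0\le G(x,y,t)\le Ct^{-n/2}e^{-|x-y|^2/(Ct)}$ and $|\nabla_xG(x,y,t)|\le Ct^{-(n+1)/2}e^{-|x-y|^2/(Ct)}$ for $t\in(0,1]$, together with $\|G(\cdot,\cdot,t)\|_{L^\infty(\Omega\times\Omega)}\le C$ for $t\ge1$; (c) the decay $\bigl\|e^{t\Delta}f-\tfrac1{|\Omega|}\int_\Omega f\bigr\|_{L^2}\le e^{-\lambda_1t}\bigl\|f-\tfrac1{|\Omega|}\int_\Omega f\bigr\|_{L^2}$, immediate from the eigenfunction expansion of the self-adjoint Neumann Laplacian.

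For (i): on $t\in(0,1]$, H\"older's inequality together with $\|G(x,\cdot,t)\|_{L^1}=1$ and (b) gives $\|G(x,\cdot,t)\|_{L^{s'}}\le Ct^{-n/(2s)}$, hence the $L^s$-$L^\infty$ bound $\|e^{t\Delta}f\|_{L^\infty}\le Ct^{-n/(2s)}\|f\|_{L^s}$; interpolating $L^r$ between $L^s$ and $L^\infty$ and using the $L^s$-contraction yields the claimed bound, while for $t\ge1$ the crude part of (b) gives $\|e^{t\Delta}f\|_{L^r}\le C\|f\|_{L^s}$, so summing the two regimes produces the factor $1+t^{-\frac n2(\frac1s-\frac1r)}$. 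For (ii) I would run the same scheme with $\nabla_xG$ in place of $G$: (b) gives $\|\nabla e^{t\Delta}f\|_{L^\infty}\le Ct^{-\frac12-\frac n{2s}}\|f\|_{L^s}$, a Schur test gives the endpoint $\|\nabla e^{t\Delta}f\|_{L^s}\le Ct^{-1/2}\|f\|_{L^s}$, interpolation covers $t\in(0,1]$, and for $t\ge1$ I would write $\nabla e^{t\Delta}f=\nabla e^{\frac12\Delta}\bigl(e^{(t-\frac12)\Delta}f_0\bigr)$ with $f_0:=f-\tfrac1{|\Omega|}\int_\Omega f$ (legitimate since $\nabla e^{t\Delta}$ kills constants), bound the outer factor by the time-$\tfrac12$ small-time estimate, and establish $\|e^{\tau\Delta}f_0\|_{L^s}\le Ce^{-\lambda_1\tau}\|f_0\|_{L^s}$ for $\tau\ge\tfrac12$ by pushing $f_0$ into $L^2$ via (i), applying (c), and smoothing back via (i); merging the regimes gives (ii).

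For (iv) I would dualize (ii): since $\int_\Omega(e^{t\Delta}\nabla\cdot f)\varphi=-\int_\Omega f\cdot\nabla e^{t\Delta}\varphi$ for $\varphi\in C^\infty(\overline\Omega)$, testing against (ii) with conjugate exponents $r',s'$ reproduces the asserted bound. For (iii) the notable feature is the absence of the $t^{-1/2}$ prefactor, reflecting that $f$ already carries one derivative; I would exploit the identification $D\bigl((1-\Delta_N)^{1/2}\bigr)=W^{1,s}(\Omega)$ with equivalent norms together with the analytic functional calculus, writing $\nabla e^{t\Delta}f=\nabla(1-\Delta_N)^{-1/2}\cdot e^{t\Delta_N}(1-\Delta_N)^{1/2}f$, applying (i) to $e^{t\Delta_N}$ acting on $(1-\Delta_N)^{1/2}f\in L^s(\Omega)$, and --- after subtracting the mean of $f$ and invoking Poincar\'e's inequality --- reading off the $L^s$-$L^r$ smoothing in terms of $\|\nabla f\|_{L^s}$, with the exponential factor once more coming from (c). A more elementary alternative for (iii) is to integrate by parts in the kernel representation so as to move $\nabla_x$ onto $f$.

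The step I expect to be the genuine obstacle is not in the above chain but rather the classical input (b): the Gaussian and gradient upper bounds for the \emph{Neumann} heat kernel on a bounded smooth domain are considerably less transparent than the free-space Gaussian because of the presence of the boundary, and, for (iii), the $W^{1,s}$-characterization of the domain of $(1-\Delta_N)^{1/2}$ is itself a nontrivial theorem. The remaining steps are bookkeeping, the only delicate point being the propagation of the $L^2$-decay (c) to an $L^s$-$L^r$ decay with the correct algebraic prefactor, and the treatment of the borderline exponents ($s=1$, $r=\infty$, $s=r$) by interpolation or the Schur test rather than a literal endpoint application of H\"older's inequality.
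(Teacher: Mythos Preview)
The paper does not actually prove this lemma: it is stated as a standard tool and simply cited from \cite{Cao15, LPX18, Win10-JDE}, with the preamble ``we list them here for direct reference.'' Your outline is essentially the argument given in those references (kernel bounds plus interpolation for small time, spectral gap for large time, duality for (iv)), so there is nothing to compare against in the paper itself; your sketch is a faithful reconstruction of the standard proof and is more than the paper provides.
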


 Using variation-of-constants formula for $S$ and $I$,  applying these semi-group estimates as in Lemma \eqref{semigroup}, and then, invoking the Banach contraction mapping principle and standard parabolic regularity of quasilinear parabolic systems, one can readily derive the local-in-time  solvability and extensibility of the IBVP for the cross-diffusive SIS model \eqref{SIS-mass-C}; see similar reasonings in \cite{BBTW15, Tao10, Tao11, Win12-CPDE, WSW16}.

\begin{lemma}
\label{local-in-time}
Let  $\Omega\subset \mathbb{R}^n \ (n\geq 1)$ be a bounded domain with a smooth boundary, and suppose that the initial data $(S_0,I_0)$ satisfies \eqref{initial data-req}. Then there is a unique positive and classical maximal solution $(S, I)$ of the IBVP \eqref{SIS-mass-C}  on some maximal interval $[0, T_m)$ with $0<T_m \leq \infty$ such that
$$
(S, I)\in \left(C(\overline{\Omega}\times [0, T_m);W^{1,r_0}(\Omega))\cap C^{2,1}(\overline{\Omega}\times (0, T_m)\right)^2.$$
Furthermore, if  $T_m<\infty$, then
\begin{equation}
\|S(\cdot,t)\|_{L^\infty}+\|I(\cdot,t)\|_{W^{1,r_0}}\to\infty \mbox{ as } t\nearrow T_m.
\label{extend}
\end{equation}
On $[0, T_m)$, the $L^1$-norm of   $S+I$ is non-increasing and hence is uniformly bounded:
\be
\label{L1-id}
\int_\Omega (S+I) \leq  \int_\Omega (S_0+I_0).
\ee
\end{lemma}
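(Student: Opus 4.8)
The plan is to recast \eqref{SIS-mass-C} as a fixed-point problem for its Duhamel (variation-of-constants) system, solve it by the Banach contraction principle in a suitable complete metric space, upgrade regularity and positivity by standard parabolic theory, and read off \eqref{L1-id} by integrating the sum of the two equations. Denoting by $(e^{td_S\Delta})_{t\ge0}$ and $(e^{td_I\Delta})_{t\ge0}$ the Neumann heat semigroups, a mild solution is a fixed point of $\Phi=(\Phi_1,\Phi_2)$,
\begin{align*}
\Phi_1(S,I)(t)&=e^{td_S\Delta}S_0+\chi\int_0^t e^{(t-s)d_S\Delta}\nabla\cdot\big(S(s)\nabla I(s)\big)\,ds\\
&\quad-\beta\int_0^t e^{(t-s)d_S\Delta}\big(S^qI^p\big)(s)\,ds+\gamma\int_0^t e^{(t-s)d_S\Delta}I(s)\,ds,\\
\Phi_2(S,I)(t)&=e^{td_I\Delta}I_0+\beta\int_0^t e^{(t-s)d_I\Delta}\big(S^qI^p\big)(s)\,ds-(\gamma+\mu)\int_0^t e^{(t-s)d_I\Delta}I(s)\,ds,
\end{align*}
which I would study on
$$
\mathcal X_T=\Big\{(S,I)\in C^0(\overline\Omega\times[0,T])\times C^0\big([0,T];W^{1,r_0}(\Omega)\big):\ \|S\|_{L^\infty(\Omega\times[0,T])}\le R,\ \textstyle\sup_{0\le t\le T}\|I(\cdot,t)\|_{W^{1,r_0}}\le R\Big\}
$$
for an $R$ chosen from $\|S_0\|_{L^\infty}$, $\|I_0\|_{W^{1,r_0}}$ and a small $T\in(0,1)$; when $0<q<1$ (resp.\ $0<p<1$) I would additionally require $S\ge\tfrac12\inf_\Omega S_0>0$ (resp.\ $I\ge\tfrac12\inf_\Omega I_0>0$) on $\mathcal X_T$, which is consistent because the Neumann semigroup is order preserving with $\inf_\Omega e^{td_S\Delta}S_0\ge\inf_\Omega S_0$, while the remaining integral terms of $\Phi_1$ have $L^\infty$-norm $O(T^\theta)$ for some $\theta>0$. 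On $\mathcal X_T$ the map $(S,I)\mapsto S^qI^p$ is bounded and Lipschitz---this is precisely where the infimum conditions in \eqref{initial data-req} enter, to absorb the sublinearity when $p$ or $q$ lies below $1$---and $S\nabla I$ is bounded in $L^{r_0}$.

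The crux is that $\Phi$ maps $\mathcal X_T$ into itself and is a contraction there once $T=T(R)$ is small, which rests entirely on Lemma~\ref{semigroup} and the embedding $W^{1,r_0}(\Omega)\hookrightarrow C^0(\overline\Omega)$ (available since $r_0>n$). For the $L^\infty$-bound of $\Phi_1$ the delicate term is the chemotactic flux: by Lemma~\ref{semigroup}(iv) with $(s,r)=(r_0,\infty)$,
$$
\big\|e^{(t-s)d_S\Delta}\nabla\cdot(S\nabla I)\big\|_{L^\infty}\le k_4\big(1+(t-s)^{-\frac12-\frac{n}{2r_0}}\big)\|S\nabla I\|_{L^{r_0}},
$$
and the exponent $-\tfrac12-\tfrac{n}{2r_0}$ exceeds $-1$ exactly because $r_0>n$, so the $s$-integral converges and is $O(T^{\frac12-\frac{n}{2r_0}})$; the other two integrals of $\Phi_1$ are even milder. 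For the $W^{1,r_0}$-bound of $\Phi_2$ I would estimate $\|\nabla\Phi_2(S,I)(t)\|_{L^{r_0}}$: the leading term satisfies $\|\nabla e^{td_I\Delta}I_0\|_{L^{r_0}}\le 2k_3e^{-\lambda_1 t}\|\nabla I_0\|_{L^{r_0}}$ by Lemma~\ref{semigroup}(iii) with $(s,r)=(r_0,r_0)$ and tends to $\nabla I_0$ in $L^{r_0}$ as $t\downarrow0$, while the two integral terms generate only an integrable $(t-s)^{-1/2}$ singularity via Lemma~\ref{semigroup}(ii) with $(s,r)=(r_0,r_0)$. The differences $\Phi(S,I)-\Phi(\tilde S,\tilde I)$ obey the same bounds times $\|(S,I)-(\tilde S,\tilde I)\|_{\mathcal X_T}$ and a positive power of $T$, so $\Phi$ contracts for $T$ small; Banach's theorem yields a unique local mild solution, and parabolic $L^p$--Schauder estimates for quasilinear systems (as in \cite{BBTW15,Tao10,Tao11,Win12-CPDE,WSW16}) bootstrap it to the asserted class $\big(C(\overline\Omega\times[0,T_m);W^{1,r_0}(\Omega))\cap C^{2,1}(\overline\Omega\times(0,T_m))\big)^2$, with $T_m$ maximal.

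Positivity, the extensibility criterion \eqref{extend}, and \eqref{L1-id} are then secured. For $p,q\ge1$ the nonlinearity extends to $\{S\ge0\}$ and the maximum principle applies directly: $I$ solves $I_t-d_I\Delta I+(\gamma+\mu)I=\beta S^qI^p\ge0$ with $I_0\ge0$, $I_0\not\equiv0$, so $I>0$ for $t>0$ by the strong maximum principle, and then the $S$-equation, rewritten as a linear parabolic equation with coefficients locally bounded for $t>0$ and nonnegative source $\gamma I$, gives $S>0$ for $t>0$; for $0<p<1$ or $0<q<1$ positivity (with the uniform lower bounds above) is built into the iteration and then propagated, with $\min_\Omega I(\cdot,t)\ge(\inf_\Omega I_0)e^{-(\gamma+\mu)t}$ by comparison with that ODE and an analogous positive lower bound for $\min_\Omega S(\cdot,t)$ (using that the sink $-\beta S^qI^p$ is small where $S$ is small while $I$ is bounded below), so that neither infimum degenerates while $T_m<\infty$. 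Hence, if $T_m<\infty$ but $\|S(\cdot,t)\|_{L^\infty}+\|I(\cdot,t)\|_{W^{1,r_0}}$ stayed bounded as $t\to T_m^-$, the solution could be restarted from a time close to $T_m$---the local existence span depending only on those norms and, in the sublinear cases, on the above positive lower bounds---and continued past $T_m$, contradicting maximality, so \eqref{extend} holds. Finally, adding the first two equations of \eqref{SIS-mass-C} gives $(S+I)_t=d_S\Delta S+d_I\Delta I+\chi\nabla\cdot(S\nabla I)-\mu I$; integrating over $\Omega$ and using the homogeneous Neumann conditions to kill $\int_\Omega\Delta S$, $\int_\Omega\Delta I$ and $\int_\Omega\nabla\cdot(S\nabla I)$ leaves $\frac{d}{dt}\int_\Omega(S+I)=-\mu\int_\Omega I\le0$, so $t\mapsto\int_\Omega(S+I)$ is non-increasing and \eqref{L1-id} follows.

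I expect the only genuinely nontrivial points to be, first, the self-mapping/contraction step, where the chemotactic flux $\nabla\cdot(S\nabla I)$ forces $I$ to be measured in a norm controlling $\nabla I$ in $L^{r_0}$ and demands integrability of the kernel $(t-s)^{-\frac12-\frac{n}{2r_0}}$---this is exactly the reason for the hypothesis $r_0>\max\{2,n\}$ in \eqref{initial data-req}---while the non-Lipschitz behavior of $S^qI^p$ at $S=0$ or $I=0$ for sublinear exponents must be tamed by confining the iteration to a region bounded below; and, second, propagating a uniform positive lower bound for $S$ up to $T_m$ when $0<q<1$, where the cross-diffusion term precludes the elementary minimum-principle argument and one must exploit the smallness of $S^qI^p$ near $\{S=0\}$ together with the already-established uniform positivity of $I$. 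Everything else is routine.
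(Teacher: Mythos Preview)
Your proposal is correct and follows essentially the same approach as the paper: the paper merely asserts that local existence and the extensibility criterion follow from the variation-of-constants formula, the semigroup estimates of Lemma~\ref{semigroup}, the Banach contraction principle, and standard parabolic regularity (citing \cite{BBTW15,Tao10,Tao11,Win12-CPDE,WSW16}), and derives \eqref{L1-id} exactly as you do by adding the two equations and integrating against the Neumann conditions. You have simply supplied the details the paper leaves to references, including the role of $r_0>n$ in making the kernel $(t-s)^{-\frac12-\frac{n}{2r_0}}$ integrable and the lower-bound constraints in $\mathcal X_T$ needed to handle the sublinear exponents.
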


\begin{proof} As remarked above, the statements concerning the local-in-time existence of  classical solutions to the IBVP \eqref{SIS-mass-C}  and the extensibility criterion \eqref{extend} are well-established. The positivity of $(S, I)$ follows simply from the strong maximum principle.  Thanks to the  homogeneous Neumann boundary conditions, adding  the $S$- and $I$-equation in \eqref{SIS-mass-C} and integrating by parts, we find that
\be\label{S+I-id}
\frac{d}{dt}\int_\Omega \left(S+I\right)=-\mu \int_\Omega I,
\ee
which, upon integration, simply yields \eqref{L1-id}.
\end{proof}
For any $\tau\in(0, T_m)$, we can shift the initial time $t=0$ to $t=\tau$ so that upon retreating $(u(\cdot, \tau), v(\cdot, \tau))$ as the new initial data, we will sometimes assume that $r_0=\infty$ in \eqref{initial data-req}.

\section{A priori estimates and global well-posedness}
In this section, unless otherwise specified, we will assume  that the basic conditions in the local existence of Lemma \ref{local-in-time} hold. With such specifications,  we shall aim to  derive   $(L^\infty, W^{1,r_0})$-type estimates  of $(S, I)$ under two set of conditions: (I) $\chi$ is small and $p,q$ are relatively less restrictive, and (II) $\chi$ is arbitrary and $p,q$ are relatively restrictive. The small-$\chi$ boundedness is mainly based on Neumann semi-group type estimates and the large-$\chi$ boundedness is based on mainly coupled energy estimate method.
\subsection{Small $\chi$-boundedness and global existence}
In this subsection, we renovate a loop argument   from \cite{BPTW19} to provide boundedness of classical solutions to \eqref{SIS-mass-C} with $\chi$  small and $p,q$  relatively less restrictive.

\begin{lemma}\label{Ibdd-lemma} Given  $T\in(0, T_m)$, and suppose there exists $M=M(T)>0$ such that
 \be\label{S-bdd-imply Ibdd-con}
 \|S(\cdot,t)\|_{L^\infty}\leq M, \quad \quad \forall\, t\in(0, T).
 \ee
Let
\begin{align*}
 p<1+\min\left\{1, \  \frac{2}{n}\right\}.
\label{2022-1}
\end{align*}
Then, for
\be\label{mq-range}
  m=\begin{cases}
  \max\left\{1,\,\frac{1}{p}\right\}, & \text{if }   p<2, \  n=1,  \\[0.2cm]
 \frac{1}{p}, & \text{if }   p< \frac{2}{n},  \ n\geq 2,  \\[0.2cm]
   \frac{n}{2-n\epsilon}, & \text{if  } \frac{2}{n} \leq p < 1+\frac{2}{n},\ n\geq 2,
\end{cases}
 \ee
 with $0< \epsilon<\min\left\{\frac{2}{n}, \ 1+\frac{2}{n}-p\right\}$,
one can find $K_1>0$ independent of $M$ such that
\be\label{S-bdd-imply Ibdd0}
L:=\sup_{t\in(0, T)}\|I(\cdot,t)\|_{L^\infty}\leq K_1\left(1+M^\frac{qm}{m+1-pm}\right)
\ee
and, for any
\be\label{rs-def}
 \zeta\in(0, 1], \   \ s=\frac{nr_0}{n+(1-\zeta)r_0} \leq r_0,
\ee
there exists $K_{r_0}>0$ independent of $M, L$ such that
 \be\label{S-bdd-imply gradIbdd0}
\sup_{t\in(0, T)} \left\|\nabla I(\cdot,t)\right\|_{L^{r_0}}\leq K_{r_0} \left[1+ M^{q+\frac{qm}{m+1-pm}\left(p-\frac{1}{s}\right)^+} +M^{\frac{qm}{m+1-pm}\left(1-\frac{1}{s}\right)^+}  \right].
 \ee
\end{lemma}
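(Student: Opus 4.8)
The plan is to obtain both estimates from the variation-of-constants representation of $I$ with respect to the Neumann heat semigroup $(e^{td_I\Delta})_{t\ge0}$, using the a priori mass bound \eqref{L1-id}, the hypothesis $\|S(\cdot,t)\|_{L^\infty}\le M$, and interpolation of the intermediate norms of $I$ between $L^1$ and $L^\infty$. Throughout, fix $t\in(0,T)$ and set $t_0:=(t-1)_+$, so that all time-integrals run over an interval of length at most one and the ``initial-layer'' contributions can be kept uniform in $M$.

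For \eqref{S-bdd-imply Ibdd0}: since $I\ge0$, $\beta S^qI^p\le\beta M^qI^p$, and $-(\gamma+\mu)I\le0$, a comparison with the inhomogeneous heat equation (the Neumann semigroup being order preserving) yields the pointwise bound $0\le I(\cdot,t)\le e^{(t-t_0)d_I\Delta}I(\cdot,t_0)+\beta M^q\int_{t_0}^t e^{(t-\sigma)d_I\Delta}I^p(\cdot,\sigma)\,d\sigma$. Taking $L^\infty$-norms, the first term is $\le C$ with $C$ independent of $M$ (estimated by $\|I_0\|_{L^\infty}$, finite since $r_0>n$, when $t<1$, and by $k_1(1+1)\|I(\cdot,t_0)\|_{L^1}$ via Lemma \ref{semigroup}(i) when $t\ge1$). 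For the integral I would invoke Lemma \ref{semigroup}(i) from $L^m$ to $L^\infty$, i.e. $\|e^{\tau d_I\Delta}I^p\|_{L^\infty}\le k_1(1+\tau^{-\frac n{2m}})\|I\|_{L^{pm}}^p$ (legitimate as $m\ge1$), whose time singularity is integrable over $(0,1)$ because $m>n/2$. Interpolating $\|I\|_{L^{pm}}\le\|I\|_{L^\infty}^{1-\frac1{pm}}\|I\|_{L^1}^{\frac1{pm}}$ (valid since $pm\ge1$) and inserting $\|I\|_{L^1}\le\int_\Omega(S_0+I_0)$, I arrive at
$$
\Phi\le C_1+C_2\,M^q\,\Phi^{\,p-\frac1m},
\qquad \Phi:=\sup_{t\in(0,T)}\|I(\cdot,t)\|_{L^\infty},
$$
where $\Phi<\infty$ because $(S,I)\in C(\overline\Omega\times[0,T_m);W^{1,r_0})$ and $W^{1,r_0}(\Omega)\hookrightarrow L^\infty(\Omega)$. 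The whole point is that $m$, chosen as in \eqref{mq-range} with $\epsilon$ in the stated range, can be made to satisfy simultaneously $m\ge1$ and $pm\ge1$ (to apply Lemma \ref{semigroup}(i) and the interpolation), $m>n/2$ (integrability of the singularity), and $m+1-pm>0$ — the last being exactly what $\epsilon<1+\tfrac2n-p$ secures in the third case, and what $2-p>0$ gives in the $n=1$, $p\ge1$ case, and so on; the feasibility of all four requirements at once is precisely the standing hypothesis $p<1+\min\{1,\tfrac2n\}$. Since then $p-\tfrac1m<1$, Young's inequality absorbs $\tfrac12\Phi$ into the left-hand side and gives $\Phi\le K_1\bigl(1+M^{\frac{qm}{m+1-pm}}\bigr)$, i.e. \eqref{S-bdd-imply Ibdd0}, with $K_1$ independent of $M$.

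For \eqref{S-bdd-imply gradIbdd0} I would differentiate the variation-of-constants formula and keep every term, $\nabla I(\cdot,t)=\nabla e^{(t-t_0)d_I\Delta}I(\cdot,t_0)+\int_{t_0}^t\nabla e^{(t-\sigma)d_I\Delta}[\beta S^qI^p-(\gamma+\mu)I](\cdot,\sigma)\,d\sigma$. The first term is again $\le C$ independently of $M$ and $L$, via Lemma \ref{semigroup}(iii) applied to $\|\nabla I_0\|_{L^{r_0}}$ for $t<1$ and Lemma \ref{semigroup}(ii) applied to $\|I(\cdot,t_0)\|_{L^1}$ for $t\ge1$. For the integral I apply Lemma \ref{semigroup}(ii) from $L^s$ to $L^{r_0}$: the choice $s=\frac{nr_0}{n+(1-\zeta)r_0}\le r_0$ makes $\frac1s-\frac1{r_0}=\frac{1-\zeta}{n}$, so the time weight is $\tau^{-1+\frac\zeta2}e^{-\lambda_1\tau}$, which is integrable on $(0,\infty)$ for every $\zeta\in(0,1]$. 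It then remains to bound the $L^s$-norm of the reaction term by $\beta M^q\|I\|_{L^{ps}}^p+(\gamma+\mu)\|I\|_{L^s}$; one further interpolation between $L^1$ and $L^\infty$ (or Hölder's inequality when $ps<1$ or $s<1$) yields $\|I\|_{L^{ps}}^p\le C\,L^{(p-\frac1s)^+}$ and $\|I\|_{L^s}\le C\,L^{(1-\frac1s)^+}$, and substituting $L\le K_1\bigl(1+M^{\frac{qm}{m+1-pm}}\bigr)$ from \eqref{S-bdd-imply Ibdd0} and collecting powers of $M$ produces exactly the three-term bound in \eqref{S-bdd-imply gradIbdd0}.

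The hard part is closing the loop for \eqref{S-bdd-imply Ibdd0}: the semigroup/interpolation exponent $m$ must be fitted into the (possibly narrow) window dictated by $m>n/2$ on one side and $m+1-pm>0$ (equivalently $p-\tfrac1m<1$) on the other, while keeping $m\ge1$ and $pm\ge1$; showing this window is nonempty is what forces the case split and the $\epsilon$-constraints in \eqref{mq-range} and is where the subcriticality hypothesis is consumed. A subsidiary but essential technical point is to route every initial-layer contribution through the $L^1$-norm of $I(\cdot,t_0)$ rather than its $L^\infty$-norm, so that $K_1$ and $K_{r_0}$ are genuinely independent of $M$ and $L$ as asserted.
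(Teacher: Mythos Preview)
Your proposal is correct and follows essentially the same route as the paper: variation-of-constants for $I$, $L^m\to L^\infty$ semigroup smoothing combined with interpolation of $\|I\|_{L^{pm}}$ between $L^1$ and $L^\infty$, then Young's inequality to close the nonlinear loop (this is exactly where $p-\tfrac1m<1$, equivalently $m+1-pm>0$, is consumed); and the same scheme at the gradient level via Lemma~\ref{semigroup}(ii) with the exponent $s$ chosen so that the time singularity is $\tau^{-1+\zeta/2}$.

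The one noteworthy technical difference is how time-integrability is secured. The paper writes the Duhamel formula over the full interval $[0,t]$, but works with the \emph{damped} semigroup $e^{t(d_I\Delta-C_0)}$ (using $\gamma+\mu\ge C_0>0$), so the factor $e^{-C_0(t-\tau)}$ makes $\int_0^t(1+(t-\tau)^{-n/(2m)})e^{-C_0(t-\tau)}\,d\tau$ finite; the initial-layer term is simply $\|e^{t(d_I\Delta-C_0)}I_0\|_{L^\infty}\le\|I_0\|_{L^\infty}$, which is already $M$-independent because $I_0$ is fixed data. Your $t_0=(t-1)_+$ device is a valid alternative that replaces the exponential damping by a unit-length window and routes the initial layer through $\|I(\cdot,t_0)\|_{L^1}$; it is slightly more elaborate here but has the advantage of not relying on $\gamma+\mu>0$. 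Either way the constants are $M$-independent, and your concern that one must avoid $\|I(\cdot,t_0)\|_{L^\infty}$ is well taken for your split but moot for the paper's, since there $t_0=0$.
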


\begin{proof}
In view of the variation-of-constants formula, we have
\be
\nonumber
0\leq I(\cdot,t) \leq e^{td_I(\Delta-C_0)}I_0 + \int_0^te^{(t-\tau)d_I(\Delta-C_0) } (\beta S^q I^p)(\cdot,\tau)d\tau,
\ee
where $C_0>0$ is such that $\gamma+\mu \geq C_0$ for all $x\in\overline\Omega$ and $t>0$.
Next, we apply  \eqref{S-bdd-imply Ibdd-con}, \eqref{mq-range}  and the  $L^r$-$L^s$-smoothing properties of the Neumann semigroup $\{e^{td_I\Delta}\}_{t\geq0}$  in Lemma \ref{semigroup} along with the maximum principle to derive, for $t\in(0, T)$, that
\be\label{I-infty-est}
\begin{split}
 0\leq I(\cdot,t) &\leq \left\|e^{td_I(\Delta - C_0)} I_0 \right\|_{L^\infty} + \int_0^t \left\| e^{(t-\tau)d_I(\Delta-C_0) }\left(\beta S^q I^p\right)(\cdot,\tau)\right\|_{L^\infty} d\tau\\
   &\leq \|I_0\|_{L^\infty} + C_1\int_0^t\left[1+(t-\tau)^{
   -\frac{n}{2m}}\right]e^{- C_0(t-\tau)}\left\|(S^qI^p)(\cdot,\tau)\right\|_{L^m}d\tau.
   \end{split}
   \ee
We observe from \eqref{L1-id}, \eqref{mq-range} and the definition of $N$ in \eqref{S-bdd-imply Ibdd0}   that, for $\tau \in (0,T)$,
   \be\label{com-est1}
   \begin{split}
&\|\beta (S^qI^p)(\cdot,\tau)\|_{L^m}\leq \beta^0 M^q\|I(\cdot,\tau)\|_{L^{pm}}^p\\
&\leq
\begin{cases}
 \beta^0 M^q \|S_0+I_0\|_{L^1}^{\min\{1,p\}}  N^{(p-1)^+}, & \text{if }  n=1, \\[0.25cm]
 \beta^0 M^q\|S_0+I_0\|_{L^1},  & \text{if }  p< \frac{2}{n},  \   n\geq2, \\[0.25cm]
\beta^0 M^q\|S_0+I_0\|_{L^1}^\frac{1}{m}N^{p-\frac{1}{m}}, & \text{if } \frac{2}{n} \leq  p< 1+ \frac{2}{n}, \  n\geq2.
\end{cases}
\end{split}
\ee
 Thus, we infer from  \eqref{I-infty-est} and \eqref{com-est1}  that
\be\label{I-infty-est2}
L\leq \begin{cases}
C_2\left( 1 + M^q \right),  &\text{if } \left\{ p\leq1, \ n=1\right\} \text{ or } \left\{p< \frac{2}{n}, \ n\geq2\right\}, \\[0.25cm]
C_2\left(1+M^q L^{p-\frac{1}{m}}\right), &\text{if } \left\{1<p< 2,  \ n=1\right\} \text{ or } \left\{p\geq \frac{2}{n}, \ n\geq 2\right\}.
\end{cases}
\ee
In the first case of \eqref{I-infty-est2}, observe that $pm=1$ and \eqref{S-bdd-imply Ibdd0} holds automatically, whereas in  the second scenario,  notice that  the choice of $m$ in \eqref{mq-range} implies $p-\frac{1}{m}<1$, and then we use Young's inequality with epsilon to deduce from  the second case of \eqref{I-infty-est2} that
\begin{align*}
L &\leq  C_2 + C_2 M^q L^{p-\frac{1}{m}}\\
 &\leq C_2 + \frac{1}{2}L+\frac{(m+1-pm)}{m}\left(\frac{2(pm-1)}{m}\right)^\frac{pm-1}{m+1-pm}
 \left(C_2M^q\right)^\frac{m}{m+1-pm},
\end{align*}
which quickly  gives rise to our desired estimate \eqref{S-bdd-imply Ibdd0}.

Next, by the variation-of-constants formula, it holds
\begin{align}
I(\cdot,t) = e^{t d_I \Delta}I_0 + \int_0^t e^{(t-\tau)d_I \Delta}(\beta S^q I^p)(\cdot,\tau)d\tau - \int_0^t e^{(t-\tau)d_I\Delta}((\gamma+\mu)I)(\cdot,\tau)d\tau.
\nonumber
\end{align}
We take the gradient and then use the $L^r$-$L^s$-estimates of the Neumann semigroup $\{e^{td_I\Delta}\}_{t\geq0}$ in Lemma \ref{semigroup} and \eqref{S-bdd-imply Ibdd0} to derive
\begin{equation*}
\begin{split}
\left\|\nabla I(\cdot,t)\right\|_{L^{r_0}}&\leq \left\|\nabla e^{td_I\Delta}I_0\right\|_{L^{r_0}} + \int_0^t \left\|\nabla e^{(t-\tau)d_I\Delta } (\beta S^q I^p)(\cdot,\tau)\right\|_{L^{r_0}}d\tau\\
&\ \ +\int_0^t \left\|\nabla e^{(t-\tau)d_I\Delta }((\gamma+\mu) I)(\cdot,\tau)\right\|_{L^{r_0}} d\tau\\
&\leq \|I_0\|_{W^{1,r_0}} + C_3\int_0^t\left[1+(t-\tau)^{-\frac{1}{2}
-\frac{n}{2}\left(\frac{1}{s}-\frac{1}{r_0}\right)}\right]e^{-\lambda_1 (t-\tau)} \left\| (S^qI^p)(\cdot,t)\right\|_{L^s}d\tau\\
&\ \ + C_3\int_0^t\left[1+(t-\tau)^{-\frac{1}{2}
-\frac{n}{2}\left(\frac{1}{s}-\frac{1}{r_0}\right)}\right]e^{-\lambda_1 (t-\tau)}\|  I(\cdot,t)\|_{L^s}d\tau\\
&\leq C_4\left[1+  M^qL^{ \left(p-\frac{1}{s}\right)^+} + L^{\left(1-\frac{1}{s}\right)^+}\right]
\int_0^\infty\left(1+\tau^{-1+\frac{\zeta}{2}}\right)e^{-\lambda_1 \tau}d\tau,
\end{split}
\end{equation*}
which along with \eqref{S-bdd-imply Ibdd0}  immediately shows  \eqref{S-bdd-imply gradIbdd0}.
\end{proof}

With such preparations, we now close the loop  in the following lemma.

\begin{lemma}
\label{Sbdd-lemma}
Given  $T\in(0, T_m)$, and suppose that \eqref{S-bdd-imply Ibdd-con} and \eqref{mq-range} hold. Then, for $r>\frac{n}{2}$ and $r\geq 1$, one can find constant $K_3>\|S_0\|_{L^\infty}$ independent of $M$ and $L$ such that
\be\label{S-bdd-chismal}
\begin{split}
\sup_{t\in(0, T)}\|S(\cdot,t)\|_{L^\infty}& \leq K_3\Bigg\{1+M^{\frac{qm}{m+1-pm} \left(1-\frac{1}{r}\right)}\\
&\ \ \  \ \  +M|\chi|\left[1+ M^{q+\frac{qm}{m+1-pm} \left(p-\frac{1}{r_0}\right)^+} +M^{\frac{qm}{m+1-pm} \left(1-\frac{1}{r_0}\right)}  \right] \Bigg\}.
\end{split}
 \ee
\end{lemma}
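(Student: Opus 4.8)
The plan is to run the ``return leg'' of the loop: starting from the standing hypothesis $\|S(\cdot,t)\|_{L^\infty}\le M$ on $(0,T)$, together with the bounds \eqref{S-bdd-imply Ibdd0} and \eqref{S-bdd-imply gradIbdd0} on $\|I(\cdot,t)\|_{L^\infty}$ and $\|\nabla I(\cdot,t)\|_{L^{r_0}}$ furnished by Lemma \ref{Ibdd-lemma}, and the conserved $L^1$-estimate \eqref{L1-id}, I would re-estimate $\|S(\cdot,t)\|_{L^\infty}$ through the variation-of-constants representation
\[
S(\cdot,t)=e^{td_S\Delta}S_0+\chi\int_0^t e^{(t-s)d_S\Delta}\nabla\cdot\big(S\nabla I\big)(\cdot,s)\,ds+\int_0^t e^{(t-s)d_S\Delta}\big(\gamma I-\beta S^qI^p\big)(\cdot,s)\,ds,
\]
estimating each of the three terms in $L^\infty$ by the smoothing $L^r$--$L^s$ properties of Lemma \ref{semigroup}. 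The first term is handled by the maximum principle, $\|e^{td_S\Delta}S_0\|_{L^\infty}\le\|S_0\|_{L^\infty}$, and accounts for the leading ``$1$'' in \eqref{S-bdd-chismal} (with $K_3$ enlarged so that $K_3>\|S_0\|_{L^\infty}$).

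For the cross-diffusive term I would first note that $\int_\Omega\nabla\cdot(S\nabla I)=\int_{\partial\Omega}S\,\partial_\nu I=0$ by the homogeneous Neumann condition, so Lemma \ref{semigroup}(iv) applies with source exponent $r_0>\max\{2,n\}$ and target exponent $\infty$:
\[
\big\|e^{(t-s)d_S\Delta}\nabla\cdot(S\nabla I)(\cdot,s)\big\|_{L^\infty}\le k_4\big(1+(t-s)^{-\frac12-\frac{n}{2r_0}}\big)e^{-\lambda_1(t-s)}\,\|(S\nabla I)(\cdot,s)\|_{L^{r_0}}.
\]
Since $r_0>n$, the temporal exponent $\frac12+\frac{n}{2r_0}$ is $<1$, so $\int_0^\infty\big(1+\sigma^{-\frac12-\frac{n}{2r_0}}\big)e^{-\lambda_1\sigma}\,d\sigma<\infty$, while $\|(S\nabla I)(\cdot,s)\|_{L^{r_0}}\le\|S(\cdot,s)\|_{L^\infty}\|\nabla I(\cdot,s)\|_{L^{r_0}}\le M\,\|\nabla I(\cdot,s)\|_{L^{r_0}}$. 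Inserting \eqref{S-bdd-imply gradIbdd0} (with $\zeta=1$, so that $s=r_0$ there) then reproduces exactly the bracket $M|\chi|\big[1+M^{q+\frac{qm}{m+1-pm}(p-\frac1{r_0})^+}+M^{\frac{qm}{m+1-pm}(1-\frac1{r_0})}\big]$ appearing in \eqref{S-bdd-chismal}.

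For the reaction term I would keep $f:=\gamma I-\beta S^qI^p$ undivided and split $\int_0^t e^{(t-s)d_S\Delta}f\,ds$ into its spatial-mean and mean-free parts. The mean part equals $|\Omega|^{-1}\big(\int_\Omega S(\cdot,t)-\int_\Omega S_0\big)$ --- the $S$-equation integrated over $\Omega$ --- and hence is bounded by $|\Omega|^{-1}\int_\Omega(S_0+I_0)$ \emph{uniformly in $t$} by \eqref{L1-id}; it is essential here \emph{not} to discard the sign-favorable term $-\beta S^qI^p$, since $\int_0^t e^{(t-s)d_S\Delta}(\gamma I)\,ds$ on its own need not remain bounded. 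The mean-free part is treated by the $L^r$--$L^\infty$ smoothing of the mean-free Neumann semigroup: for any $r>\frac n2$ with $r\ge1$ the factor $\big(1+\sigma^{-\frac n{2r}}\big)e^{-\lambda_1\sigma}$ is integrable on $(0,\infty)$ (this is where $r>\frac n2$ enters), so the mean-free part is controlled by $\sup_{s\in(0,T)}\|f(\cdot,s)\|_{L^r}\le\gamma\sup_s\|I\|_{L^r}+\beta\sup_s\|S^qI^p\|_{L^r}$. I would bound $\|I\|_{L^r}\le\|I\|_{L^\infty}^{1-\frac1r}\|I\|_{L^1}^{\frac1r}\lesssim L^{1-\frac1r}$ using \eqref{L1-id}, and $\|S^qI^p\|_{L^r}$ by a Hölder/interpolation step that exploits the $L^\infty$- \emph{and} $L^1$-bounds available for both $S$ (that is, $\|S\|_{L^\infty}\le M$ and \eqref{L1-id}) and $I$; in the admissible range \eqref{mq-range} --- where one checks $pm\ge1$, hence $\frac{qm}{m+1-pm}\ge q$ --- this trade-off keeps the ensuing power of $M$ no larger than those already recorded in \eqref{S-bdd-chismal}. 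Finally substituting $\|I\|_{L^\infty}\le L\le K_1\big(1+M^{\frac{qm}{m+1-pm}}\big)$ from \eqref{S-bdd-imply Ibdd0} turns all $L$-powers into $M$-powers; collecting the three contributions and enlarging $K_3$ yields \eqref{S-bdd-chismal}.

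The main obstacle, in my view, is not a single estimate but keeping every bound strictly independent of $T$: the favorable sign of $-\beta S^qI^p$ cannot simply be thrown away but must be exploited \emph{through the mass balance of $S$ together with \eqref{L1-id}}, in tandem with the exponential decay of the mean-free Neumann semigroup; and the interpolation exponents must be tracked so that each power of $M$ on the right of \eqref{S-bdd-chismal} is exactly the claimed one, which is precisely where the restrictions $p<1+\min\{1,2/n\}$, $r>\frac n2$ and $r_0>\max\{2,n\}$ are used (the first through \eqref{mq-range}, the last two through the integrability of the Duhamel time-singularities). I would also emphasize that the \emph{quantitative} shape of \eqref{S-bdd-chismal} is the whole point: combined with \eqref{S-bdd-imply Ibdd0} it yields, after inserting $L$, a closed inequality for $\sup_{(0,T)}\|S\|_{L^\infty}$ of the form $M\le C(1+M^{\theta}+|\chi|\,M^{1+\eta})$ with some $\theta<1$ and $\eta>0$, so that for $|\chi|$ sufficiently small a standard continuity/bootstrap argument forces $\sup_{(0,T)}\|S\|_{L^\infty}$ to remain bounded independently of $T$ --- but that closing step, and the definition of $\chi_0$, belong to the subsequent result, not to this lemma.
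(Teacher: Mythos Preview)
Your Duhamel set-up and the treatment of the initial term and the cross-diffusive term match the paper's proof exactly (including the choice $s=r_0$ in \eqref{S-bdd-imply gradIbdd0}). The divergence occurs at the reaction term, and there your route does not recover \eqref{S-bdd-chismal}.

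The paper does \emph{not} keep $f=\gamma I-\beta S^qI^p$ intact. It uses the positivity of the Neumann heat semigroup (``maximum principle'') to drop $-\beta S^qI^p$ pointwise, obtaining
\[
S(\cdot,t)\le \bigl\|e^{td_S\Delta}S_0\bigr\|_{L^\infty}
+|\chi|\int_0^t\bigl\|e^{(t-\tau)d_S\Delta}\nabla\cdot(S\nabla I)\bigr\|_{L^\infty}\,d\tau
+\int_0^t\bigl\|e^{(t-\tau)d_S\Delta}(\gamma I)\bigr\|_{L^\infty}\,d\tau,
\]
and then only needs the interpolation $\|I\|_{L^r}\le \|I\|_{L^\infty}^{1-1/r}\|I\|_{L^1}^{1/r}\le C\,L^{1-1/r}$; no $\|S^qI^p\|_{L^r}$ ever appears. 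Substituting \eqref{S-bdd-imply Ibdd0} for $L$ gives precisely the exponent $\frac{qm}{m+1-pm}(1-\tfrac1r)$ in \eqref{S-bdd-chismal}.

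Your alternative forces you to estimate $\|S^qI^p\|_{L^r}$ in the mean-free part, and the assertion that ``this trade-off keeps the ensuing power of $M$ no larger than those already recorded'' is not correct. Take the cleanest case $n=1$, $r=1$ (the choice actually used in Theorem~\ref{glob bdd-semi1}): the target exponent in \eqref{S-bdd-chismal} is $\frac{qm}{m+1-pm}\cdot 0=0$, i.e.\ a pure constant, whereas any H\"older/interpolation bound on $\|S^qI^p\|_{L^1}$ using $\|S\|_{L^\infty}\le M$ and the $L^1$-bounds yields at best $C M^q$. More generally one checks that your exponent exceeds $\frac{qm}{m+1-pm}(1-\tfrac1r)$ by $\frac{q}{m+1-pm}>0$. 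This is not cosmetic: \eqref{thm-con} places no upper bound on $q$ when $n\le 2$, so an extra $M^q$ term without the $|\chi|$ prefactor would destroy the sub-linearity \eqref{qmp-range} on which the definition of $\chi_0$ in \eqref{chi-max} and the continuity argument of Theorem~\ref{glob bdd-semi1} rest.

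Your worry that $\int_0^t e^{(t-s)d_S\Delta}(\gamma I)\,ds$ need not stay bounded is legitimate at the level of Lemma~\ref{semigroup}(i), which carries no decay; the paper writes an $e^{-\lambda_1(t-\tau)}$ factor that strictly speaking is only available on mean-zero inputs. But the clean fix is not to abandon the positivity shortcut: one can, for instance, run the Duhamel against $e^{t(d_S\Delta-1)}$ (exactly as the paper does later in \eqref{S-rewriting}), which restores exponential damping at the price of an extra $\|S\|_{L^r}\le C M^{1-1/r}$ term of sublinear exponent---harmless for \eqref{chi-max}. Your mean/mean-free decomposition of the \emph{full} $f$, by contrast, is not salvageable without an additional restriction on $q$.
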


\begin{proof}
We first rewrite the $S$-equation via the  semigroup representation as
\be
\nonumber
S(t)=e^{td_S\Delta}S_0+\chi\int_0^te^{(t-\tau)d_I\Delta }\nabla\cdot(S\nabla I)d\tau+\int_0^te^{(t-\tau)d_I\Delta } \left(-\beta S^qI^p+\gamma I \right)d\tau.
\ee
Then we use \eqref{S-bdd-imply Ibdd-con}, \eqref{S-bdd-imply Ibdd0} and the well-known  $L^r$-$L^s$-smoothing properties of the Neumann semigroup $\{e^{td_I\Delta}\}_{t\geq0}$ along with the maximum principle to deduce, for $t\in(0, T)$, that
\be\label{S-infty-est}
\begin{split}
S(\cdot,t) &\leq \left\|e^{td_S\Delta}S_0 \right\|_{L^\infty} + |\chi| \int_0^t \left\| e^{(t-\tau)d_S\Delta } \nabla\cdot(S\nabla I)(\cdot,\tau)\right\|_{L^\infty} d\tau\\
 &\ \ \  + \int_0^t \left\| e^{(t-\tau)d_S\Delta }\left(\gamma I\right)(\cdot,\tau)\right\|_{L^\infty} d\tau\\
   &\leq \left\|S_0\right\|_{L^\infty} + C_1|\chi| \int_0^t \left[1+(t-\tau)^{-\frac{1}{2}-\frac{n}{2r_0}}\right]e^{- \lambda_1(t-\tau)}\left\|(S\nabla I)(\cdot,\tau)\right\|_{L^{r_0}}d\tau\\
   & \ \ \ + C_2\int_0^t\left[1+(t-\tau)^{
   -\frac{n}{2r}}\right]e^{- \lambda_1(t-\tau)} \left\| I(\cdot,\tau)\right\|_{L^r}d\tau\\
   &\leq \|S_0\|_{L^\infty} + C_3 |\chi| M \sup_{t\in(0, T)}\left\|\nabla I(\cdot,t)\right\|_{L^{r_0}} + C_4 N^{1-\frac{1}{r}},
   \end{split}
   \ee
   which along with \eqref{S-bdd-imply Ibdd0}, \eqref{rs-def}  and \eqref{S-bdd-imply gradIbdd0}   with $s=r_0$ yields the estimate \eqref{S-bdd-chismal}.
\end{proof}

Now, we are ready to establish our first result on the well-posedness of system \eqref{SIS-mass-C}.

\begin{theorem} \label{glob bdd-semi1} Let $\Omega\subset\mathbb{R}^n \ (n\geq1)$ be a bounded and smooth domain.  Assume that
\be\label{thm-con}
 np+(n-2)^+q<n+\min\left\{n, \  2\right\}.
\ee
 Then, for any initial data $(S_0, I_0)$ fulfilling \eqref{initial data-req}, there exists a positive constant  $\chi_0>0$ (cf. \eqref{chi-max} below) such that whenever $|\chi|<\chi_0$, the corresponding classical solution to the IBVP \eqref{SIS-mass-C} exists globally in time  and is uniformly bounded in the sense there exists a positive constant $C=C(S_0, I_0, \chi, \Omega, p,q, \beta,\gamma,\mu)>0$ such that
\be\label{S-I-bdd-fin1}
\|S(\cdot,t)\|_{L^\infty} + \|I(\cdot,t)\|_{W^{1,r_0}}\leq C, \ \ \ \forall \, t>0.
\ee
\end{theorem}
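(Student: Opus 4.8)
The plan is to chain the two loop estimates of Lemmas~\ref{Ibdd-lemma} and~\ref{Sbdd-lemma} and then to close the loop by a continuity (connectedness) argument on the maximal existence time. Fix the maximal solution $(S,I)$ on $[0,T_m)$ from Lemma~\ref{local-in-time}, and for $T\in(0,T_m)$ set $M(T):=\sup_{t\in(0,T)}\|S(\cdot,t)\|_{L^\infty}$, which is finite by the regularity asserted there. Since $q>0$, condition~\eqref{thm-con} already gives $np<n+\min\{n,2\}$, i.e.\ $p<1+\min\{1,\frac2n\}$, so the hypotheses of Lemma~\ref{Ibdd-lemma} are in force; fix $m$ as in~\eqref{mq-range} and write $\theta:=\frac{qm}{m+1-pm}>0$.

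For each $T\in(0,T_m)$, Lemma~\ref{Ibdd-lemma} applied with $M=M(T)$ in~\eqref{S-bdd-imply Ibdd-con} yields $\sup_{(0,T)}\|I\|_{L^\infty}\le K_1(1+M(T)^{\theta})$ together with a bound on $\sup_{(0,T)}\|\nabla I\|_{L^{r_0}}$ by a fixed power of $M(T)$. Feeding these into Lemma~\ref{Sbdd-lemma}, in which the exponent $r>\max\{1,\frac n2\}$ is still free, produces an inequality of the schematic form
\be\label{loopS}
M(T)\ \le\ K_3\Bigl(1+M(T)^{\alpha}+|\chi|\,M(T)\,\Psi\bigl(M(T)\bigr)\Bigr),\qquad \alpha:=\theta\Bigl(1-\frac1r\Bigr),
\ee
where $\Psi$ is a fixed finite sum of positive powers of its argument (the bracket multiplying $M|\chi|$ in~\eqref{S-bdd-chismal}), and the constants $K_1,K_3$ and the quantities $\alpha,\Psi$ depend only on the data, in particular not on $T$ and not on $\chi$.

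Now I would close the loop. The point is that~\eqref{thm-con} is exactly the condition under which $r$ can be chosen with $\alpha<1$: since $1-\frac1r$ decreases to $(1-\frac2n)^+$ as $r\downarrow\max\{1,\frac n2\}$, one needs $\theta(1-\frac2n)^+<1$, and a routine case distinction according to the branch of~\eqref{mq-range} shows this amounts precisely to $np+(n-2)^+q<n+\min\{n,2\}$. Fix such an $r$, so that $\alpha<1$. Because the chemotactic term in~\eqref{loopS} carries a power of $M(T)$ exceeding $1$, it cannot be absorbed into the left-hand side; instead I would choose $M_0>\|S_0\|_{L^\infty}$ \emph{first}, depending only on $K_3$ and $\alpha$, so large that $K_3(1+M_0^{\alpha})\le\frac12M_0$, and then \emph{define} $\chi_0>0$ by the requirement $K_3\,\chi_0\,M_0\,\Psi(M_0)\le\frac14M_0$, which is legitimate since $\Psi(M_0)$ is now a fixed number. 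For $|\chi|<\chi_0$, put $T^{\star}:=\sup\{T\in(0,T_m):M(T)\le M_0\}$; this is positive because $M(T)\to\|S_0\|_{L^\infty}<M_0$ as $T\downarrow0$. If $T^{\star}<T_m$, then $M(T^{\star})\le M_0$, and~\eqref{loopS} at $T=T^{\star}$ (using monotonicity of $x\mapsto x^{\alpha}$ and of $x\mapsto x\Psi(x)$) forces $M(T^{\star})\le\frac12M_0+\frac14M_0<M_0$; by continuity of $t\mapsto\|S(\cdot,t)\|_{L^\infty}$ this strict bound persists slightly beyond $T^{\star}$, contradicting maximality. Hence $T^{\star}=T_m$ and $\|S(\cdot,t)\|_{L^\infty}\le M_0$ on $(0,T_m)$. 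One last application of Lemma~\ref{Ibdd-lemma} with $M=M_0$ then bounds $\|I(\cdot,t)\|_{L^\infty}$ and $\|\nabla I(\cdot,t)\|_{L^{r_0}}$ uniformly on $(0,T_m)$, so $\|S\|_{L^\infty}+\|I\|_{W^{1,r_0}}$ stays bounded; the extensibility criterion~\eqref{extend} then forces $T_m=\infty$, and~\eqref{S-I-bdd-fin1} follows.

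I expect the only genuine difficulty to be the exponent bookkeeping hidden in the equivalence "$\alpha<1\Leftrightarrow$~\eqref{thm-con}": one must verify that the gain produced by composing Lemma~\ref{Ibdd-lemma} with Lemma~\ref{Sbdd-lemma} drops below $1$ exactly when~\eqref{thm-con} holds, pinning down the admissible window for $r$ in each regime of~\eqref{mq-range}; and, since the chemotactic contribution enters~\eqref{loopS} superlinearly in $M$, one must organise the constants so that $\chi_0$ is fixed only \emph{after} $M_0$, which is where the smallness of $\chi$ is actually used.
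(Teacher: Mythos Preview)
Your proposal is correct and follows essentially the same approach as the paper: both chain Lemmas~\ref{Ibdd-lemma} and~\ref{Sbdd-lemma}, argue that the exponent $\alpha=\frac{qm}{m+1-pm}\bigl(1-\frac{1}{r}\bigr)$ can be made strictly less than $1$ under~\eqref{thm-con} by suitable choice of $r$ (and of $\epsilon$ in~\eqref{mq-range}), and then close the loop via a continuity argument on the maximal interval. The only cosmetic differences are that the paper writes down explicit choices of $m,r,\epsilon$ and defines $\chi_0$ as the supremum in~\eqref{chi-max} (thereby optimizing the barrier level $M$), whereas you argue the existence of an admissible $r$ abstractly and fix $M_0$ first before choosing $\chi_0$; since the theorem only asserts the existence of some $\chi_0>0$, either organisation suffices.
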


\begin{proof}
Based on \eqref{mq-range} and  \eqref{thm-con}, we specify $m$ and $r$ as follows:
\be
\nonumber
  \begin{cases}
 m=\max\left\{1, \ \frac{1}{p}\right\},\  \ r=1,  & \text{if } n=1, \\[0.2cm]
m=\frac{1}{p}, \ \ r=\frac{1}{2}\left[\frac{n}{2}+\min\left\{\frac{q}{(q-1)^+}, \ n\right\}\right], & \text{if  } n\geq2, \   p< \frac{2}{n},  \\[0.2cm]
   m=\frac{n}{2-n\epsilon},\ \ r=\frac{1}{2} \left[ \frac{n}{2} + \min\left\{ \frac{q}{\left[q - \left(1+\frac{2}{n}-p\right) + \epsilon \right]^+}, \ n\right\}\right], & \text{if  } n\geq2, \  \frac{2}{n}\leq p<1+\frac{2}{n},
\end{cases}
 \ee
 where the positive constant $\epsilon$ is given by
 $$
  \epsilon=\frac{1}{2}\min\left\{\frac{2}{n}, \ 1+\frac{2}{n}-p- \left(1-\frac{2}{n}\right) q \right\}>0, \ \ n\geq 2.
 $$
 Then by plain verifications we find that both $m$ and $r$ satisfy the conditions set forth in Lemmas \ref{Ibdd-lemma} and \ref{Sbdd-lemma}, and moreover that
\be\label{qmp-range}
\frac{qm}{m+1-pm} \left(1-\frac{1}{r}\right) < 1.
\ee
For those geometrical constants $K_i$ depending on the initial data and $\Omega$  provided by Lemmas \ref{Ibdd-lemma} and \ref{Sbdd-lemma}, thanks to \eqref{qmp-range}, we first define a positive and finite  constant $\chi_0$ by
\be\label{chi-max}
\begin{split}
\chi_0&=\sup_{z>0}\frac{z-K_3 \left[1+z^{\frac{qm}{m+1-pm} \left(1-\frac{1}{r}\right)}\right]}{K_3 z\left[1+ z^{q+\frac{qm}{m+1-pm} \left(p-\frac{1}{r_0}\right)^+} + z^{\frac{qm}{m+1-pm}\left(1-\frac{1}{r_0}\right)}  \right]}\\
&\ \ =\frac{M-K_3\left[1+M^{\frac{qm}{m+1-pm} \left(1-\frac{1}{r}\right)}\right]} {K_3 M \left[1+ M^{q+\frac{qm}{m+1-pm} \left(p-\frac{1}{r_0}\right)^+} + M^{\frac{qm}{m+1-pm}\left(1-\frac{1}{r_0}\right)}  \right]}\in(0, \infty)
\end{split}
\ee
for some $M>K_3$. This, along with the fact $K_3\geq \|S_0\|_{L^\infty}$, necessarily  implies that
\be\label{M-choose}
\|S_0\|_{L^\infty}<M, \quad \mbox{and}\quad  K_3\left(1+M^{\frac{qm}{m+1-pm} \left(1-\frac{1}{r}\right)}\right)<M.
\ee
Now, for $\chi\in[-\chi_0, \chi_0]$, we let $(S, I)$ be the corresponding  maximally extended solution of the IBVP  \eqref{SIS-mass-C} on $\Omega\times (0, T_m)$ with $T_m\in(0, \infty]$. Thanks to  Lemmas \ref{Ibdd-lemma} and \ref{Sbdd-lemma},  we next employ a standard continuity argument to close the loop-argument and then conclude global existence (i.e., $T_m=\infty$) and boundedness \eqref{S-I-bdd-fin1}. To this end,  we  define
\be\label{set}
\mathcal U:=\Bigl\{(0, T_0)\subset(0, T_m):\  \|S(\cdot, t)\|_{L^\infty}\leq M, \  \forall\, t\in(0, T_0)\Bigr\}.
\ee
Then the continuity of $S$ and \eqref{M-choose} imply  that $\mathcal U$ is nonempty and so $T=\sup \mathcal U$ is well-defined and $T\in(0, \infty]$.  Furthermore, by \eqref{S-bdd-chismal}, \eqref{chi-max} and \eqref{M-choose}, we infer that
\be\label{S-bdd-chismal+}
\begin{split}
\sup_{t\in(0, T)}\|S(\cdot,t)\|_{L^\infty}& \leq K_3\left[1+M^{\frac{qm}{m+1-pm} \left(1-\frac{1}{r}\right)}\right]\\
&\ \ \  \ \  + K_3M|\chi|\left[1+ M^{q+\frac{qm}{m+1-pm} \left(p-\frac{1}{r_0}\right)^+} + M^{\frac{qm}{m+1-pm} \left(1-\frac{1}{r_0}\right)}  \right]\\
&\leq M.
\end{split}
 \ee
 Finally, combining \eqref{S-bdd-chismal+}, \eqref{S-bdd-imply Ibdd0} and  \eqref{S-bdd-imply gradIbdd0} and the extensibility criterion in \eqref{extend} of Lemma \ref{local-in-time}, we must have that $T_m=\infty$ and and then we achieve our claimed  uniform $(L^\infty, W^{1,r_0})$-boundedness of $(S,I)$ on $\Omega\times (0, \infty)$.
\end{proof}

\subsection{Large $\chi$-boundedness and global existence}

In this subsection, we shall present two types of large $\chi$-boundedness of solutions to \eqref{SIS-mass-C} via mainly semi-group method and energy estimate method as illustrated in Subsections 3.2.1 and 3.2.2.

\subsubsection{Large $\chi$-boundedness in any dimensions with  $p,q$ restrictive}

We first use  arguments  in  sprit similar to Subsection 3.1 to  obtain the following large $\chi$-boundedness  in any dimensions while with more restrictive $p$ and $q$.

\begin{theorem}
\label{th+-semi-}
Let $\Omega\subset\mathbb{R}^n \ (n\geq1)$ be a bounded and smooth domain.  Assume that
\be
 q<\frac{1}{n+1}\quad \mbox{and} \quad \ p+(n+1)q < 1+ \min\left\{1,\frac{2}{n}\right\}.
\label{thm-con+}
\ee
 Then, for any initial data $(S_0, I_0)$ fulfilling \eqref{initial data-req}, the corresponding classical solution to the IBVP \eqref{SIS-mass-C} exists globally in time  and is uniformly bounded in the sense  of \eqref{S-I-bdd-fin1}.
\end{theorem}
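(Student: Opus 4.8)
The plan is to re-run the Neumann–semigroup bootstrap loop of Subsection~3.1, keeping the $L^\infty$-estimate of $I$ essentially as in Lemma~\ref{Ibdd-lemma}, but replacing the treatment of the chemotactic term in the Duhamel representation of $S$ by one that does not cost a full power of $\|S\|_{L^\infty}$; this is exactly what removes the smallness requirement on $\chi$. Concretely, fix $T\in(0,T_m)$ and suppose $\|S(\cdot,t)\|_{L^\infty}\le M$ on $(0,T)$. Since \eqref{thm-con+} forces $p<p+(n+1)q<1+\min\{1,\frac2n\}$, Lemma~\ref{Ibdd-lemma} applies and yields $L:=\sup_{(0,T)}\|I\|_{L^\infty}\le K_1(1+M^{e_1})$ with $e_1:=\frac{qm}{m+1-pm}$ and $m$ chosen as in \eqref{mq-range}. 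The key arithmetic observation is that, for this choice of $m$, the conditions \eqref{thm-con+} amount to $q<\frac1{n+1}$ together with $e_1<\frac1{n+1}$: one checks this case by case from \eqref{mq-range}, since $e_1=q$ when $p$ is small, $e_1=\frac q{2-p}$ when $n=1$, $p\in(1,2)$, and $e_1\to\frac q{1+\frac2n-p}$ as $\epsilon\downarrow0$ when $n\ge2$, $p\ge\frac2n$.

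For the gradient estimate I would not use \eqref{S-bdd-imply gradIbdd0} verbatim, since the factor $M^q$ there is harmful; instead, exploiting $q<1$, I split $\|(S^qI^p)(\cdot,\tau)\|_{L^s}\le\|S\|_{L^{qa}}^q\|I\|_{L^{pb}}^p$ with $\frac1a+\frac1b=\frac1s$ and $qa\le1$, so that $\|S\|_{L^{qa}}^q\le C\|S\|_{L^1}^q\le C$ by the uniform $L^1$-bound \eqref{L1-id}. Interpolating $\|I\|_{L^{pb}}\le\|I\|_{L^1}^{1-\theta}L^\theta$ and applying Lemma~\ref{semigroup}(ii) then gives, for every $\tilde r$ and every $s$ with $\frac1s<\frac1n+\frac1{\tilde r}$ and $s<\frac1q$, a bound $\sup_{(0,T)}\|\nabla I\|_{L^{\tilde r}}\le K(1+M^{e_1\vartheta})$ with an exponent $\vartheta=\vartheta(p,q,s)\ge0$ that can be pushed down to $\max\{(p+q-\tfrac1s)^+,(1-\tfrac1s)^+\}$ by taking $a$ close to $\frac1q$; the $\gamma I$-term in the $S$-equation is controlled by $C(1+M^{e_1(1-1/r)})$ for $r>\max\{1,\frac n2\}$, exactly as in Lemma~\ref{Sbdd-lemma}.

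The crucial new step is the chemotactic term. In $S(t)=e^{td_S\Delta}S_0+\chi\int_0^te^{(t-\tau)d_S\Delta}\nabla\cdot(S\nabla I)\,d\tau+\int_0^te^{(t-\tau)d_S\Delta}(-\beta S^qI^p+\gamma I)\,d\tau$, I estimate, via Lemma~\ref{semigroup}(iv) with an exponent $\rho\in(n,\tilde r)$ (so that $t\mapsto t^{-\frac12-\frac n{2\rho}}$ is integrable near $0$), $\|e^{(t-\tau)d_S\Delta}\nabla\cdot(S\nabla I)\|_{L^\infty}\le k_4(1+(t-\tau)^{-\frac12-\frac n{2\rho}})e^{-\lambda_1(t-\tau)}\|S\nabla I\|_{L^\rho}$, and then $\|S\nabla I\|_{L^\rho}\le\|S\|_{L^\kappa}\|\nabla I\|_{L^{\tilde r}}$ with $\frac1\rho=\frac1\kappa+\frac1{\tilde r}$. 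Instead of pulling $S$ out in $L^\infty$, I interpolate $\|S\|_{L^\kappa}\le\|S\|_{L^1}^{1/\kappa}\|S\|_{L^\infty}^{1-1/\kappa}\le C\,M^{1-1/\kappa}$, again using \eqref{L1-id}; this replaces the fatal $\|S\|_{L^\infty}^1$ of Subsection~3.1 by $\|S\|_{L^\infty}^{1-1/\kappa}$. Choosing $\tilde r$ large drives $\kappa$ down towards $n$, so that $1-\frac1\kappa$ can be taken as close as we wish to $1-\frac1n$, the smallest value compatible with $\rho>n$. Collecting the three contributions gives $\sup_{(0,T)}\|S\|_{L^\infty}\le K_3\big(1+M^{e_1(1-1/r)}+|\chi|\,M^{(1-1/\kappa)+e_1\vartheta}\big)$, and using $q<\frac1{n+1}$, $e_1<\frac1{n+1}$ and $p+q<1+\min\{1,\frac2n\}$ (all consequences of \eqref{thm-con+}) one verifies that $\tilde r,s,r,\kappa$ may be fixed so that both exponents $e_1(1-\frac1r)$ and $(1-\frac1\kappa)+e_1\vartheta$ are strictly below $1$.

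The step I expect to be the main obstacle is precisely this last piece of exponent bookkeeping: the chemotactic contribution unavoidably carries the factor $M^{1-1/\kappa}$ with $1-\frac1\kappa$ forced close to $1-\frac1n$, leaving only a budget of roughly $\frac1n$ for the $I$-generated exponents, and reconciling this with the exponents produced by the reaction term $\beta S^qI^p$ (which feed back through $L$ and through $\|\nabla I\|_{L^{\tilde r}}$) is what pins down the restrictive ranges \eqref{thm-con+}. Once the self-mapping inequality $M\mapsto K_3(1+M^\theta)$ with some $\theta<1$ is in hand, one concludes exactly as in the proof of Theorem~\ref{glob bdd-semi1}, now with no smallness on $\chi$: pick $M$ so large that $\|S_0\|_{L^\infty}<M$ and $K_3(1+M^\theta)<M$ (possible since $\theta<1$), set $\mathcal U:=\{(0,T_0)\subset(0,T_m):\|S(\cdot,t)\|_{L^\infty}\le M\ \forall t\in(0,T_0)\}$, and use continuity of $S$ together with the above estimates to show $\sup\mathcal U=T_m$ and $\sup_{(0,T_m)}\|S\|_{L^\infty}\le M$. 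The extensibility criterion \eqref{extend} of Lemma~\ref{local-in-time} then forces $T_m=\infty$, and boundedness of $\|I(\cdot,t)\|_{W^{1,r_0}}$ for the prescribed $r_0$ follows from the now-global $L^\infty$-bounds on $S$ and $I$ by one more application of the variation-of-constants formula for $\nabla I$ (Lemma~\ref{semigroup}(ii)), giving \eqref{S-I-bdd-fin1}.
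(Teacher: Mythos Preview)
Your proposal is correct and follows the same overarching semigroup--bootstrap architecture as the paper, but you treat the reaction term $\beta S^qI^p$ inside the $\nabla I$--estimate differently, and this is a genuine (if modest) deviation worth recording. The paper simply pulls $S$ out in $L^\infty$ when bounding $\|S^qI^p\|_{L^s}$, which keeps an explicit $M^q$ factor in the $\|\nabla I\|_{L^\infty}$--estimate \eqref{S-bdd-imply gradIbdd-infty}; it then compensates for this by interpolating $\|S\|_{L^{2r}}\le C\,M^{1-1/(2r)}$ in the chemotactic contribution (cf.\ \eqref{S-infty-est+}) and verifying that $q+\big(1-\tfrac{1}{2r}\big)+e_1(p-\tfrac1n+\tfrac{\zeta}{n})^+<1$ holds under \eqref{thm-con+} via the explicit case list \eqref{mr-take+}--\eqref{coeff<1}. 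You instead exploit $q<1$ to place $S^q$ in $L^{1/q}$ and absorb it into the uniform $L^1$--bound \eqref{L1-id}, so that your $\|\nabla I\|_{L^{\tilde r}}$--estimate carries no $M^q$ at all; the chemotactic term is then handled by the same $L^1$--$L^\infty$ interpolation $\|S\|_{L^\kappa}\le C\,M^{1-1/\kappa}$ that the paper uses. Your resulting critical exponent is $\big(1-\tfrac1\kappa\big)+e_1\,\max\{(p+q-\tfrac1s)^+,(1-\tfrac1s)^+\}$, which under \eqref{thm-con+} can indeed be driven below $1$ for suitable $\kappa,s$ close to $n$ (and large $\tilde r$). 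Your clean reformulation of \eqref{thm-con+} as ``$q<\tfrac{1}{n+1}$ and $e_1<\tfrac{1}{n+1}$'' is correct and makes the arithmetic more transparent than the paper's case-by-case verification. What each approach buys: the paper's route reuses \eqref{S-bdd-imply gradIbdd0} verbatim and keeps all new work in the $S$--estimate, whereas yours requires redoing the $\nabla I$--bound but yields slightly smaller exponents (since $e_1(p+q-\tfrac1n)\le q+e_1(p-\tfrac1n)$ when $e_1\le 1$), so in principle it could cover a marginally larger parameter range than stated. The closing continuity argument is identical in both.
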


\begin{proof}
Taking $r_0=\infty$ and $s=\frac{n}{1-\zeta}$ in \eqref{rs-def} with $\zeta\in(0,1]$, we get  from \eqref{S-bdd-imply gradIbdd0} that
 \be
\label{S-bdd-imply gradIbdd-infty}
\sup_{t\in(0, T)}\|\nabla I(\cdot,t)\|_{L^\infty} \leq K_\zeta \left[1+ M^{q+\frac{qm}{m+1-pm}\left(p-\frac{1}{n}+\frac{\zeta}{n}\right)^+} + M^{\frac{qm}{m+1-pm}\left(1-\frac{1}{n}+\frac{\zeta}{n}\right)^+}  \right].
 \ee
With this, for any $r>\frac{n}{2}$ and $r\geq 1$, we re-bound \eqref{S-infty-est} as follows:
 \be\label{S-infty-est+}
\begin{split}
S(\cdot,t) &\leq \left\|e^{td_S\Delta}S_0\right\|_{L^\infty} + |\chi|\int_0^t \left\| e^{(t-\tau)d_S\Delta } \nabla\cdot(S\nabla I)(\cdot,\tau)\right\|_{L^\infty} d\tau\\
 &\ \ \  + \int_0^t \left\| e^{(t-\tau)d_S\Delta }\left(\gamma I\right)(\cdot,\tau)\right\|_{L^\infty} d\tau\\
   &\leq \|S_0\|_{L^\infty} + C_1|\chi|\int_0^t\left[1+(t-\tau)^{-\frac{1}{2}-\frac{n}{4r}}\right]e^{- \lambda_1(t-\tau)}\left\|(S\nabla I)(\cdot,\tau)\right\|_{L^{2r}}d\tau\\
   & \ \ \  + C_2\int_0^t\left[1+(t-\tau)^{ -\frac{n}{2r}}\right]e^{- \lambda_1(t-\tau)}\| I(\cdot,\tau)\|_{L^r}d\tau\\
   &\leq \|S_0\|_{L^\infty}+C_3|\chi|M^{1-\frac{1}{2r}} \sup_{t\in(0, T)}\|\nabla I(\cdot,t)\|_{L^\infty}+C_4 N^{1-\frac{1}{r}},
   \end{split}
   \ee
 which along with \eqref{S-bdd-imply gradIbdd-infty} and \eqref{S-bdd-imply Ibdd0} allows us to infer
 \be\label{S-infty-est+}
\begin{split}
 \sup_{t\in(0, T)}\|S(\cdot,t)\|_{L^\infty} & \leq C_5 \Bigg[1 + |\chi| M^{1-\frac{1}{2r}} + |\chi| M^{q+1-\frac{1}{2r}+\frac{qm}{m+1-pm}
   \left(p-\frac{1}{n}+\frac{\zeta}{n}\right)^+}\\
   & \ \ \ +|\chi| M^{1-\frac{1}{2r}+\frac{qm}{m+1-pm}
   \left(1-\frac{1}{n}+\frac{\zeta}{n}\right)^+} + M^{\frac{qm}{m+1-pm}\left(1-\frac{1}{r}\right)}\Bigg].
   \end{split}
   \ee
 Now, we observe  that \eqref{thm-con+} implies that
\begin{equation*}
\left\{ \begin{array}{lll}
q+1-\frac{1}{n}+\frac{q\frac{n}{2}}{\frac{n}{2}+1-p\frac{n}{2}} \left(p-\frac{1}{n}\right)^+<1,\\[0.3cm]
1-\frac{1}{n}+\frac{q\frac{n}{2}}{\frac{n}{2}+1-p\frac{n}{2}} \left(1-\frac{1}{n}\right)^+<1, \\[0.3cm]
\frac{q\frac{n}{2}}{\frac{n}{2}+1-p\frac{n}{2}}\left(1-\frac{2}{n}\right)^+<1,
\end{array}\right.
\end{equation*}
which upon continuity allows us to fix $0<\epsilon<\min\left\{\frac{2}{n}, \ 1+\frac{2}{n}-p\right\}$ (cf. \eqref{mq-range})  such that
\be\label{epsilon-choise}
\begin{cases}
q+1-\frac{1}{n+\epsilon}+\frac{q\frac{n}{2-n\epsilon}}{\frac{n}{2-n\epsilon}+1
-p\frac{n}{2-n\epsilon}}\left(p-\frac{1-\epsilon}{n}\right)^+<1,\\[0.25cm]
1-\frac{1}{n+\epsilon}+\frac{q\frac{n}{2-n\epsilon}}{\frac{n}{2-n\epsilon}
+1-p\frac{n}{2-n\epsilon}} \left(1-\frac{1-\epsilon}{n}\right)^+<1, \\[0.25cm]
\frac{q\frac{n}{2-n\epsilon}}{\frac{n}{2-n\epsilon}+1-p\frac{n}{2-n\epsilon}} \left(1-\frac{2}{n+\epsilon}\right)^+<1.
\end{cases}
\ee
Based on \eqref{thm-con+} and \eqref{S-infty-est+}, we first choose $m$, $r$  and $\zeta$ as follows:
   \be\label{mr-take+}
  \begin{cases}
 m=\frac{1}{p},\  \ r=1,  \  \  \zeta=\frac{1}{2}\min\left\{1-p, \ \frac{1}{2q}\right\},& \text{if } n=1,\  p<1, \\[0.2cm]
 m=1,\  \ r=1,  \ \   \zeta=\frac{2-p-2q}{3q}, & \text{if } n=1, \  p\geq 1, \\[0.2cm]
m=\frac{1}{p},   \ r=\frac{1}{2}\left\{\frac{n}{2}+   \frac{n}{2q[(np-1)^++n]} \right\}, & \text{if  } n\geq2, \  p<\frac{2}{n},  \\[0.2cm]
   m=\frac{n}{2-n\zeta},\ r=\frac{n+\zeta}{2}, \ \zeta = \epsilon, & \text{if  } n\geq2,\  p\geq  \frac{2}{n}.
\end{cases}
 \ee
 In the third case of \eqref{mr-take+}, $\zeta>0$ is chosen so that
\begin{equation*}
\left\{ \begin{array}{ll}
p-\frac{1}{n}+\frac{\zeta}{n}<0,&\text{if }p< \frac{1}{n},\\[0.25cm]
\left(np-1+n+\zeta\right) \left[\frac{q}{2}+\frac{1}{2(np-1+n)} \right] < 1,&\text{if }p\geq \frac{1}{n},
\end{array}\right.
\end{equation*}
which is feasible since
\begin{align*}
\left(np-1+n\right) \left[\frac{q}{2}+\frac{1}{2(np-1+n)} \right] = \frac{1}{2}+\frac{q}{2}(np-1+n) < \frac{1}{2} + \frac{q}{2}(n+1) <1,
\end{align*}
for $ p \geq \frac{1}{n}$ and $(p,q)$ fulfilling \eqref{thm-con+}.
 With such chosen $m$, $r$ and $\zeta$, we then compute from \eqref{S-infty-est+}, \eqref{epsilon-choise} and \eqref{mr-take+} that
\be\label{coeff<1}
\begin{cases}
 q+1-\frac{1}{2r}+\frac{qm}{m+1-pm} \left(p-\frac{1}{n}+\frac{\zeta}{n}\right)^+<1, \\[0.2cm]
 1-\frac{1}{2r}+\frac{qm}{m+1-pm} \left(1-\frac{1}{n}+\frac{\zeta}{n}\right)^+<1, \\[0.2cm]
 \frac{qm}{m+1-pm}\left(1-\frac{1}{r}\right) <1.
\end{cases}
   \ee
In light of \eqref{coeff<1}, one can easily use Young's inequality with epsilon to \eqref{S-infty-est+} to derive the existence of $M=M(S_0, I_0, \beta, \gamma,\mu, \Omega, \chi,, p,q)>\|S_0\|_{L^\infty}$ such  that
\begin{equation*}
\begin{split}
 \sup_{t\in(0, T)}\|S(\cdot,t)\|_{L^\infty}&\leq C_5\Bigr[1+|\chi| M^{1-\frac{1}{2r}}+|\chi| M^{q+1-\frac{1}{2r}+\frac{qm}{m+1-pm}
   \left(p-\frac{1}{n}+\frac{\zeta}{n}\right)^+}\\
   & \ \ \quad +|\chi| M^{1-\frac{1}{2r}+\frac{qm}{m+1-pm}
   \left(1-\frac{1}{n}+\frac{\zeta}{n}\right)^+}+M^{\frac{qm}{m+1-pm}\left(1-\frac{1}{r}\right)}\Bigr] \\
  &\leq M.
   \end{split}
\end{equation*}
 With this, we see the set defined by \eqref{set} is a nonempty and both open and closed set, and thus, using similar  continuity arguments as done in  Theorem \ref{glob bdd-semi1}, we readily obtain that $T_m=\infty$ and that  $(S, I)$ is uniform-in-time bounded according to \eqref{S-I-bdd-fin1}.
\end{proof}

\subsubsection{Large $\chi$-boundedness in lower dimentions}
In this subsection,  we shall employ coupled subtle energy estimates  to improve  the  boundedness range in \eqref{thm-con+} for $n\leq2$.

To move from $L^1$-boundness  obtained in \eqref{L1-id} to  higher order regularity for the local solution $(S, I)$ of \eqref{SIS-mass-C}, we begin to study the time evolutions of $\|(S+1)\ln (S+1)\|_{L^1}$ and $\|\nabla I\|_{L^2}^2$; upon integration by parts on  \eqref{SIS-mass-C}, we find  the following energy identities.
\begin{lemma}The local-in-time solution of \eqref{SIS-mass-C} satisfies
\be\label{SlnS-id}
\begin{split}
&\frac{d}{dt}\int_\Omega (S+1)\ln (S+1)
+4d_S\int_\Omega \left|\nabla (S+1)^\frac{1}{2}\right|^2+\beta \int_\Omega S^ q I^ p  [\ln (S+1)+1]\\
&\ \ =\chi\int_\Omega [S-\ln (S+1)]\Delta I+\gamma  \int_\Omega I [\ln (S+1)+1]
\end{split}
\ee
and
\be\label{gradI-id}
\frac{1}{2}\frac{d}{dt}\int_\Omega |\nabla I|^2+(\gamma+\mu)\int_\Omega |\nabla I|^2+d_I\int_\Omega |\Delta I|^2=-\beta \int_\Omega S^ q I^ p  \Delta I.
\ee
\end{lemma}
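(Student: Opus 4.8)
The plan is to derive \eqref{SlnS-id} and \eqref{gradI-id} by testing the respective PDEs in \eqref{SIS-mass-C} with suitable multipliers and integrating by parts, using the homogeneous Neumann boundary conditions to kill boundary terms. For \eqref{SlnS-id}, I would multiply the $S$-equation by $\ln(S+1)+1 = \frac{d}{dS}\big[(S+1)\ln(S+1)\big]$ and integrate over $\Omega$; the left-hand side produces $\frac{d}{dt}\int_\Omega (S+1)\ln(S+1)$ by the chain rule (legitimate since $S>0$ is classical on $(0,T_m)$ by Lemma \ref{local-in-time}). For \eqref{gradI-id}, I would multiply the $I$-equation by $-\Delta I$ and integrate; the time-derivative term $\int_\Omega I_t(-\Delta I) = \frac12\frac{d}{dt}\int_\Omega|\nabla I|^2$ after one integration by parts, again using $\partial_\nu I = 0$.

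For \eqref{gradI-id} this is essentially immediate: testing $I_t = d_I\Delta I + \beta S^qI^p - (\gamma+\mu)I$ against $-\Delta I$ gives $\int_\Omega I_t(-\Delta I) = -d_I\int_\Omega|\Delta I|^2 - \beta\int_\Omega S^qI^p\Delta I + (\gamma+\mu)\int_\Omega I\Delta I$; moving the Laplacian terms and integrating by parts (using $\partial_\nu I=0$) converts $\int_\Omega I_t(-\Delta I)$ into $\frac12\frac{d}{dt}\int_\Omega|\nabla I|^2$ and $(\gamma+\mu)\int_\Omega I\Delta I$ into $-(\gamma+\mu)\int_\Omega|\nabla I|^2$, which rearranges to exactly \eqref{gradI-id}. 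For \eqref{SlnS-id}, the diffusion term $d_S\int_\Omega \Delta S\,[\ln(S+1)+1]$ integrates by parts to $-d_S\int_\Omega \frac{|\nabla S|^2}{S+1} = -4d_S\int_\Omega |\nabla(S+1)^{1/2}|^2$; the cross-diffusion term $\chi\int_\Omega \nabla\cdot(S\nabla I)\,[\ln(S+1)+1]$ integrates by parts to $-\chi\int_\Omega \frac{S}{S+1}\nabla I\cdot\nabla S = -\chi\int_\Omega\big(1-\tfrac{1}{S+1}\big)\nabla I\cdot\nabla S$, and since $\nabla S = \nabla(S+1)$ one integrates by parts once more (in $x$, against $\Delta I$) using $\nabla[S-\ln(S+1)] = \big(1-\tfrac{1}{S+1}\big)\nabla S$ to obtain $\chi\int_\Omega [S-\ln(S+1)]\Delta I$; the reaction terms $-\beta S^qI^p$ and $+\gamma I$ multiply straightforwardly by $\ln(S+1)+1$ to give the remaining two terms in \eqref{SlnS-id}.

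The only mild subtleties, which I would flag but not belabor, are: (i) justifying that $(S+1)\ln(S+1)$ is absolutely continuous in $t$ with the claimed weak derivative — this follows from the $C^{2,1}$ regularity of $S$ on $(0,T_m)$ together with the lower bound $S>0$, so the nonlinearity $z\mapsto(z+1)\ln(z+1)$ is $C^1$ on the range of $S$; (ii) the boundary integrals from each integration by parts vanish because of $\partial_\nu S = \partial_\nu I = 0$ on $\partial\Omega$; and (iii) in the cross-diffusion manipulation one should double-check the algebra $\tfrac{S}{S+1} = 1 - \tfrac{1}{S+1}$ and $\big(1-\tfrac{1}{S+1}\big)\nabla(S+1) = \nabla[S - \ln(S+1)]$ so that the last integration by parts produces precisely the stated term. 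None of these is a genuine obstacle; the lemma is a routine but bookkeeping-heavy computation, and the main care is in tracking the cross-diffusion term through its two integrations by parts to arrive at the compact form $\chi\int_\Omega[S-\ln(S+1)]\Delta I$.
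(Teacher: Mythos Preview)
Your proposal is correct and matches the paper's approach exactly: the paper states this lemma without proof, merely remarking that ``upon integration by parts on \eqref{SIS-mass-C}, we find the following energy identities,'' and your derivation fills in precisely those details---testing the $S$-equation against $\ln(S+1)+1$ and the $I$-equation against $-\Delta I$, with the cross-diffusion term handled via the identity $\tfrac{S}{S+1}\nabla S = \nabla[S-\ln(S+1)]$ and a second integration by parts.
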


To bound the bad terms on the right-hand sides of \eqref{SlnS-id} and \eqref{gradI-id} in terms of the dissipation terms on their left-hand sides, based on the $L^1$-boundedness of $S+I$ in \eqref{L1-id}, we employ  the G-N inequality (cf. Lemma \ref{GNI}) to deduce the following estimates.

\begin{lemma}
\label{GN-by-L1}
{\rm (i)} Let $\Omega\subset\mathbb{R}^1$ be a finite interval. Then, for $r\in(1,\infty]$ and $\epsilon>0$, it holds
\be\label{S-by-Gn1}
\begin{cases}
\left\|S+1\right\|_{L^r}^\frac{2r}{r-1}\leq C \left\|\nabla (S+1)^\frac{1}{2}\right\|_{L^2}^2+C,\\[0.25cm]
\forall s<\frac{2r}{r-1}, \ \ \left\|S+1\right\|_{L^r}^s\leq \epsilon \left\|\nabla (S+1)^\frac{1}{2}\right\|_{L^2}^2+C_\epsilon, \\[0.25cm]
 \left\|(S+1)\ln (S+1)\right\|_{L^1}\leq \epsilon \left\|\nabla (S+1)^\frac{1}{2}\right\|_{L^2}^2+C_\epsilon
\end{cases}
\ee
and
\be
\label{I-by-Gn1}
\begin{cases}
\|I\|_{L^r}^\frac{5r}{r-1}\leq C\|\Delta I\|_{L^2}^2+C\|\nabla I\|_{L^2}^2+C,\\[0.25cm]
\forall s<\frac{5r}{r-1}, \ \ \|I\|_{L^r}^s\leq \epsilon \|\Delta I\|_{L^2}^2+\epsilon \|\nabla I\|_{L^2}^2+C_\epsilon.
\end{cases}
\ee
{\rm (ii)} Let $\Omega\subset\mathbb{R}^2$ be bounded and smooth. Then, for $r\in(1,\infty)$  and $\epsilon>0$, it holds
\be\label{S-by-Gn}
\begin{cases}
\|S+1\|_{L^r}^\frac{r}{r-1}\leq C\left\|\nabla (S+1)^\frac{1}{2}\right\|_{L^2}^2+C,\\[0.25cm]
\forall s<\frac{r}{r-1}, \ \ \|S+1\|_{L^r}^s\leq \epsilon \left\|\nabla (S+1)^\frac{1}{2}\right\|_{L^2}^2+C_\epsilon, \\[0.25cm]
 \|(S+1)\ln (S+1)\|_{L^1}\leq \epsilon \left\|\nabla (S+1)^\frac{1}{2}\right\|_{L^2}^2+C_\epsilon
\end{cases}
\ee
and, for $r\in(1,\infty]$ and $\epsilon>0$, it holds
\be\label{I-by-Gn}
\begin{cases}
\|I\|_{L^r}^\frac{3r}{r-1}\leq C\|\Delta I\|_{L^2}^2+C\|\nabla I\|_{L^2}^2+C,\\[0.25cm]
\forall s<\frac{3r}{r-1}, \ \ \|I\|_{L^r}^s\leq \epsilon \|\Delta I\|_{L^2}^2+\epsilon \|\nabla I\|_{L^2}^2+C_\epsilon.
\end{cases}
\ee
\end{lemma}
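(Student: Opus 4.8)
The plan is to reduce both families of inequalities to the Gagliardo--Nirenberg inequality of Lemma~\ref{GNI}, applied once to $w:=(S+1)^{1/2}$ and once to $I$ itself, combined with the uniform $L^1$-bounds $\|S(\cdot,t)\|_{L^1},\|I(\cdot,t)\|_{L^1}\le\int_\Omega(S_0+I_0)$ supplied by \eqref{L1-id}, and then to absorb any subcritical power via Young's inequality with epsilon.

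For the $S$-estimates I would set $w=(S+1)^{1/2}$, so that $\|w\|_{L^2}^2=\|S+1\|_{L^1}$ is bounded uniformly in $t$, $\|S+1\|_{L^r}=\|w\|_{L^{2r}}^2$, and $\nabla w=\nabla (S+1)^{1/2}$. Applying \eqref{G-N-I} to $w$ with $l=0$, $k=1$, $s=2$ and lower exponent $2$ gives, with the admissible value $a=\frac{n(r-1)}{2r}$,
$$
\|S+1\|_{L^r}=\|w\|_{L^{2r}}^2\le C\bigl(\|\nabla w\|_{L^2}^{a}+\|w\|_{L^2}\bigr)^2\le C\|\nabla w\|_{L^2}^{2a}+C .
$$
Since $1/a\ge 1$, raising this to the power $\frac1a=\frac{2r}{n(r-1)}$ (which is $\frac{2r}{r-1}$ when $n=1$ and $\frac{r}{r-1}$ when $n=2$) and using $(x+y)^{1/a}\le C(x^{1/a}+y^{1/a})$ yields the first lines of \eqref{S-by-Gn1} and \eqref{S-by-Gn}. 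The second lines follow by instead raising the displayed bound to any power $s<\frac1a$, so that the resulting power $2as<2$ of $\|\nabla w\|_{L^2}$ is subcritical, and then invoking Young's inequality with epsilon; and the third lines follow from the elementary pointwise bound $(S+1)\ln(S+1)\le C_\delta (S+1)^{1+\delta}$ with $\delta>0$ small, together with the second line used at $r=s=1+\delta$. One point to watch is the exceptional clause of Lemma~\ref{GNI}: since $k-l-\frac ns=1-\frac n2$, it is inactive when $n=1$ but forces $a<1$ when $n=2$, which is exactly why $r$ must stay finite in the two-dimensional $S$-estimates while $r=\infty$ is allowed for $n=1$.

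For the $I$-estimates the dissipation at hand is $\|\Delta I\|_{L^2}^2+\|\nabla I\|_{L^2}^2$, so I would first pass from $\Delta I$ to the full Hessian: integrating by parts twice under the homogeneous Neumann condition produces $\int_\Omega|D^2 I|^2$ up to a boundary term that is absorbed by the trace inequality and interpolation, giving $\|D^2 I\|_{L^2}^2\le C\|\Delta I\|_{L^2}^2+C\|\nabla I\|_{L^2}^2$. Then \eqref{G-N-I} applied to $I$ with $l=0$, $k=2$, $s=2$ and lower exponent $1$ (which is where the $L^1$-bound of \eqref{L1-id} enters) gives $\|I\|_{L^r}\le C\|D^2 I\|_{L^2}^{\theta}+C$ with $\theta=\frac{2n(r-1)}{r(n+4)}$, hence $\|I\|_{L^r}\le C\bigl(\|\Delta I\|_{L^2}^2+\|\nabla I\|_{L^2}^2\bigr)^{\theta/2}+C$. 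Raising this to the power $\frac2\theta=\frac{r(n+4)}{n(r-1)}$, which equals $\frac{5r}{r-1}$ for $n=1$ and $\frac{3r}{r-1}$ for $n=2$, produces the first lines of \eqref{I-by-Gn1} and \eqref{I-by-Gn} (and here $r=\infty$ is admissible in both dimensions, since for $n=2$ the exceptional clause only requires $\theta=\frac23<1$); the second lines follow, as before, by raising to a power below $\frac2\theta$ and using Young's inequality with epsilon.

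The computations are routine; the one place that needs genuine care is selecting the correct power to raise to. For $S$ the quantity $\|\nabla (S+1)^{1/2}\|_{L^2}^2$ is already the clean dissipation, so one raises to $\frac1a$; for $I$, elliptic regularity only controls $\|D^2 I\|_{L^2}$ by the \emph{square root} of the dissipation $\|\Delta I\|_{L^2}^2+\|\nabla I\|_{L^2}^2$, and this is precisely why the correct exponent is $\frac2\theta$ rather than $\frac1\theta$ and why the apparently odd factor $2$ (the numbers $5$ and $3$) appears in \eqref{I-by-Gn1} and \eqref{I-by-Gn}. The only other subtlety is tracking the exceptional clause of Lemma~\ref{GNI}, which is vacuous for the $I$-estimates and the one-dimensional $S$-estimates but active, and responsible for the restriction $r<\infty$, in the two-dimensional $S$-estimates.
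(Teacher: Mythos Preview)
Your proposal is correct and follows essentially the same route as the paper: apply Gagliardo--Nirenberg to $w=(S+1)^{1/2}$ (using the $L^1$-bound of $S$) and to $I$ (using the $L^1$-bound of $I$ together with control of $\|D^2 I\|_{L^2}$ by $\|\Delta I\|_{L^2}$ and lower-order terms), then absorb subcritical exponents via Young's inequality, with the logarithmic estimate handled through $(S+1)\ln(S+1)\le C_\delta(S+1)^{1+\delta}$. The only cosmetic difference is that the paper quotes the standard $H^2$-regularity estimate $\|D^2 I\|_{L^2}^2\le C\|\Delta I\|_{L^2}^2+C\|I\|_{L^2}^2$ and then separately bounds $\|I\|_{L^2}^2\le \epsilon\|\nabla I\|_{L^2}^2+C_\epsilon$ via G--N and the $L^1$-bound, whereas you fold these into a single step; the outcome is identical.
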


\begin{proof}
(i) By the $L^1$-bound of $S$ in \eqref{L1-id}, we apply the 1-D G-N inequality as stated in Lemma \ref{GNI} to infer, for $r\in (1, \infty]$,
\begin{align*}
 \|S+1\|_{L^r}^s&= \left\|(S+1)^\frac{1}{2}\right\|_{L^{2r}}^{2s}\\
 &\leq C \left\|\nabla (S+1)^\frac{1}{2}\right\|_{L^2}^\frac{(r-1)s}{r} \left\|(S+1)^\frac{1}{2}\right\|_{L^2}^\frac{(r+1)s}{r}
  +C \left\|(S+1)^\frac{1}{2}\right\|_{L^2}^{2s}\\
  &\leq C\left\|\nabla (S+1)^\frac{1}{2}\right\|_{L^2}^\frac{(r-1)s}{r}+C\\
  &\begin{cases} = C\left\|\nabla (S+1)^\frac{1}{2}\right\|_{L^2}^2+C,  &\text{ if  } s=\frac{2r}{r-1}, \\[0.25cm]
  \leq \epsilon \left\|\nabla (S+1)^\frac{1}{2}\right\|_{L^2}^2+C_\epsilon, &\text{ if  } s<\frac{2r}{r-1}.
  \end{cases}  \end{align*}
This,  together with the simple algebraic fact that
\be\label{slns-in}
\|(S+1)\ln(S+1)\|_{L^1}\leq \|S+1\|_{L^\frac{3}{2}}^\frac{3}{2},
\ee
shows the desired estimates stated in  \eqref{S-by-Gn1}.

As for the bounds for $I$ in \eqref{I-by-Gn1}, in view of the  $L^1$-boundedness of $I$ guaranteed by \eqref{L1-id}, we use  the 1-D G-N inequality to derive, for $r\in (1, \infty]$, that
\be\label{Ilp-bdd-gn1} \begin{split}
\|I\|_{L^r}^s&\leq C\left\|D^2I\right\|_{L^2}^\frac{2(r-1)s}{5r} \left\|I\right\|_{L^1}^\frac{(3r+2)s}{5r}+C\|I\|_{L^1}^s\\
&\leq C \left\|D^2I\right\|_{L^2}^\frac{2(r-1)s}{5r}+C\\
&\begin{cases} = C\left\|D^2I\right\|_{L^2}^2+C,  &\text{ if  } s=\frac{5r}{r-1}, \\[0.25cm]
  \leq \epsilon \left\|D^2I\right\|_{L^2}^2+C_\epsilon, &\text{ if  } s<\frac{5r}{r-1}.
  \end{cases}
\end{split}
\ee
In a similar manner, we also have, for bounded smooth  $\Omega\subset\mathbb{R}^n(n\geq1)$ and any $\epsilon>0$,
\be\label{Il2-by-gradI-l2}
\|I\|_{L^2}^2\leq \epsilon \|\nabla I\|_{L^2}^2+C_\epsilon.
\ee
On the other hand, for bounded smooth $\Omega\subset\mathbb{R}^n(n\geq1)$, the $H^2$-regularity shows
\be\label{H2-regularity}
\|D^2I\|_{L^2}^2\leq C\|\Delta I\|_{L^2}^2+C\|I\|_{L^2}^2.
\ee
Joining this with \eqref{Ilp-bdd-gn1} and \eqref{Il2-by-gradI-l2}, we end up with the estimates claimed in  \eqref{I-by-Gn1}.

(ii) By the $L^1$-bound of $S$ in \eqref{L1-id}, we apply the 2D G-N inequality as stated in Lemma \ref{GNI} to obatin, for $r\in (1, \infty)$,
\begin{align*}
 \|S+1\|_{L^r}^s&= \left\|(S+1)^\frac{1}{2}\right\|_{L^{2r}}^{2s}\\
 &\leq C \left\|\nabla (S+1)^\frac{1}{2}\right\|_{L^2}^\frac{2(r-1)s}{r} \left\|(S+1)^\frac{1}{2}\right\|_{L^2}^\frac{2s}{r}
  +C \left\|(S+1)^\frac{1}{2}\right\|_{L^2}^{2s}\\
  &\leq C \left\|\nabla (S+1)^\frac{1}{2}\right\|_{L^2}^\frac{2(r-1)s}{r}+C\\
  &\begin{cases} = C \left\|\nabla (S+1)^\frac{1}{2}\right\|_{L^2}^2+C,  &\text{ if  } s=\frac{r}{r-1}, \\[0.25cm]
  \leq \epsilon \left\|\nabla (S+1)^\frac{1}{2}\right\|_{L^2}^2+C_\epsilon, &\text{ if  } s<\frac{r}{r-1}.
  \end{cases}  \end{align*}
This together with the simple fact \eqref{slns-in} entails  the desired estimates stated in  \eqref{S-by-Gn}.

To achieve the bounds for $I$ in \eqref{I-by-Gn},  we use the $L^1$-boundedness of $I$ ensured by  \eqref{L1-id} and the  2D G-N inequality to deduce, for $r\in (1, \infty]$, that
  \begin{align*}
\|I\|_{L^r}^s&\leq C\left\|D^2I\right\|_{L^2}^\frac{2(r-1)s}{3r}\|I\|_{L^1}^\frac{(r+2)s}{3r}+C\|I\|_{L^1}^s\\
&\leq C\left\|D^2I\right\|_{L^2}^\frac{2(r-1)s}{3r}+C\\
&\begin{cases} = C\left\|D^2I\right\|_{L^2}^2+C,  &\text{ if  } s=\frac{3r}{r-1}, \\[0.25cm]
  \leq \epsilon \left\|D^2I\right\|_{L^2}^2+C_\epsilon, &\text{ if  } s<\frac{3r}{r-1}.
  \end{cases}
\end{align*}
This along with \eqref{Il2-by-gradI-l2} and \eqref{H2-regularity} yields easily \eqref{I-by-Gn}.
\end{proof}
Thanks to  the $L^1$-boundedness of $S+I$ in \eqref{L1-id} and Lemma  \ref{GN-by-L1}, we now deduce higher order regularity for $(S, I)$ under certain restrictions of $ q$ and $ p $.
\begin{lemma} \label{SlnS-gradI-bdd} Let  $ q$ and $ p $ satisfy
\be\label{kappa-lambda-con}
\begin{cases}
 10 q+4 p <15, \ \ \ \  q+ p <3,  & \text{ if } n=1, \\[0.25cm]
  3 q+ p <3, \ \ \ \  q+ p <2,  & \text{ if } n=2.
  \end{cases}
\ee
Then the local-in-time solution of \eqref{SIS-mass-C} satisfies
\be\label{SlnS-grad I-est}
 \|(S+1)\ln (S+1)\|_{L^1}+\|\nabla I\|_{L^2}\leq C,\quad \quad \forall \, t\in (0,T_m).
\ee
Moreover, there exists $C>0$ such that
\be\label{I-linfty-un-bdd}
\|I(\cdot,t)\|_{L^\infty}\leq C, \quad\quad \forall \, t\in (0, T_m).
\ee
\end{lemma}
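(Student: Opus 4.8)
The plan is to merge the two energy identities \eqref{SlnS-id} and \eqref{gradI-id} into a single functional and extract an absorbing differential inequality. Fix a constant $a>0$, to be chosen large later in terms of $\chi,d_S,d_I,\Omega$ and the $L^1$-mass $\int_\Omega(S_0+I_0)$, and set
\[
y(t):=\int_\Omega (S+1)\ln (S+1)+\frac a2\int_\Omega |\nabla I|^2 .
\]
Adding $a$ times \eqref{gradI-id} to \eqref{SlnS-id} gives, for $t\in(0,T_m)$,
\[
y'(t)+4d_S\!\int_\Omega\!\bigl|\nabla (S+1)^\frac12\bigr|^2+ad_I\!\int_\Omega\!|\Delta I|^2+a(\gamma+\mu)\!\int_\Omega\!|\nabla I|^2+\beta\!\int_\Omega\! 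S^qI^p[\ln (S+1)+1]= \mathrm{(I)}+\mathrm{(II)}+\mathrm{(III)},
\]
with $\mathrm{(I)}:=\chi\int_\Omega[S-\ln (S+1)]\Delta I$, $\mathrm{(II)}:=\gamma\int_\Omega I[\ln (S+1)+1]$, $\mathrm{(III)}:=-a\beta\int_\Omega S^qI^p\Delta I$. I would estimate each of $\mathrm{(I)}$--$\mathrm{(III)}$ by a small multiple of the dissipation $D:=\int_\Omega|\nabla (S+1)^{1/2}|^2+\int_\Omega|\Delta I|^2+\int_\Omega|\nabla I|^2$ plus a time-independent constant, using only \eqref{L1-id} and the Gagliardo--Nirenberg estimates of Lemma~\ref{GN-by-L1}. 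Since the third lines of \eqref{S-by-Gn1}/\eqref{S-by-Gn} give $\int_\Omega(S+1)\ln(S+1)\le\epsilon\int_\Omega|\nabla(S+1)^{1/2}|^2+C_\epsilon$, the functional $y$ is itself controlled by $D$ (its second summand being dominated by $a(\gamma+\mu)\int_\Omega|\nabla I|^2$); hence an estimate $y'+\tfrac12(\text{dissipation})\le C$ will close into $y'+\delta y\le C$, and Gronwall will yield $y\le C$ on $(0,T_m)$, i.e. \eqref{SlnS-grad I-est}, together with $\int_t^{t+1}D\,d\tau\le C$ uniformly in $t$.

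The term $\mathrm{(II)}$ is harmless: from $I[\ln(S+1)+1]\le\frac12 I^2+\frac12(\ln(S+1))^2+I$, the elementary bound $(\ln(S+1))^2\le S+1$ (valid since $x\mapsto(\ln x)^2/x$ is bounded on $[1,\infty)$) and \eqref{L1-id}, one gets $\mathrm{(II)}\le\frac\gamma2\int_\Omega I^2+C$, and $\int_\Omega I^2\le\epsilon(\int_\Omega|\Delta I|^2+\int_\Omega|\nabla I|^2)+C_\epsilon$ by \eqref{I-by-Gn1}/\eqref{I-by-Gn}. The cross-diffusive term $\mathrm{(I)}$ is the delicate one, since no smallness of $\chi$ is imposed; this is exactly where the weight $a$ is used. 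Using $0\le S-\ln(S+1)\le S$ and Young's inequality, $\mathrm{(I)}\le\frac{ad_I}{2}\int_\Omega|\Delta I|^2+\frac{\chi^2}{2ad_I}\int_\Omega S^2$, and the first lines of \eqref{S-by-Gn1}/\eqref{S-by-Gn} (with $r=2$, together with \eqref{L1-id}) give $\int_\Omega S^2\le C_0\int_\Omega|\nabla(S+1)^{1/2}|^2+C_0$ for a fixed $C_0$ depending only on the $L^1$-mass; choosing $a$ large enough makes the coefficient $\frac{\chi^2C_0}{2ad_I}$ smaller than, say, $2d_S$, so that $\mathrm{(I)}\le\frac{ad_I}{2}\int_\Omega|\Delta I|^2+2d_S\int_\Omega|\nabla(S+1)^{1/2}|^2+C$.

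The main work is the reaction term $\mathrm{(III)}$. Young's inequality reduces it to $\mathrm{(III)}\le\frac{ad_I}{4}\int_\Omega|\Delta I|^2+C_a\int_\Omega S^{2q}I^{2p}$, so everything hinges on absorbing $\int_\Omega S^{2q}I^{2p}$ into $\epsilon D+C_\epsilon$. I would do this by Hölder's inequality, $\int_\Omega S^{2q}I^{2p}\le\|S\|_{L^{2q\sigma}}^{2q}\|I\|_{L^{2p\sigma'}}^{2p}$, followed by Young's inequality in the two factors, and then Lemma~\ref{GN-by-L1} — absorbing the $S$-factor into $\int_\Omega|\nabla(S+1)^{1/2}|^2$ via \eqref{S-by-Gn1}/\eqref{S-by-Gn} and the $I$-factor into $\int_\Omega|\Delta I|^2+\int_\Omega|\nabla I|^2$ via \eqref{I-by-Gn1}/\eqref{I-by-Gn} (exponents $\le1$ are handled directly by \eqref{L1-id}). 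The requirement that the two Young exponents stay below the critical Gagliardo--Nirenberg thresholds $\tfrac{2r}{r-1},\tfrac{5r}{r-1}$ for $n=1$ (resp. $\tfrac{r}{r-1},\tfrac{3r}{r-1}$ for $n=2$), \emph{after optimizing over the Hölder exponent} $\sigma$, is precisely what produces the admissible range \eqref{kappa-lambda-con}; a crude single-parameter splitting only yields a strictly smaller range (e.g. $2q+p<3$ when $n=1$), so the two-parameter optimization is essential — this bookkeeping is the heart of the argument and the step I expect to be the main obstacle. Once $a$ is fixed as above and $\epsilon$ (hence $C_\epsilon,C_a$) is chosen small afterwards, collecting the bounds for $\mathrm{(I)}$--$\mathrm{(III)}$ leaves $y'+c_1\int_\Omega|\nabla(S+1)^{1/2}|^2+c_2\int_\Omega|\Delta I|^2+c_3\int_\Omega|\nabla I|^2+\beta\int_\Omega S^qI^p[\ln(S+1)+1]\le C$ with $c_1,c_2,c_3>0$; combined with the control of $y$ by $D$ this gives $y'+\delta y\le C$, hence \eqref{SlnS-grad I-est} and $\int_t^{t+1}D\,d\tau\le C$.

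It remains to upgrade to $\|I(\cdot,t)\|_{L^\infty}\le C$. For $n=1$ this is immediate, since $W^{1,2}(\Omega)\hookrightarrow L^\infty(\Omega)$ and $\|I(\cdot,t)\|_{W^{1,2}}\le C$ by \eqref{SlnS-grad I-est}. For $n=2$, \eqref{SlnS-grad I-est} gives $\|I(\cdot,t)\|_{L^\ell}\le C_\ell$ for every $\ell<\infty$, while $\int_t^{t+1}D\,d\tau\le C$ and the first line of \eqref{S-by-Gn} give $\int_t^{t+1}\|S+1\|_{L^\varrho}^{\varrho/(\varrho-1)}\,d\tau\le C$ for every $\varrho<\infty$, uniformly in $t$. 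Writing $I_t-d_I\Delta I=\beta S^qI^p-(\gamma+\mu)I=:f$ and estimating $\|f\|_{L^\ell}$ by Hölder's inequality, one checks — crucially using $q<1$, which is forced by \eqref{kappa-lambda-con} when $n=2$ — that $f$ lies in $L^\sigma_{\mathrm{loc}}\bigl((0,T_m);L^\ell(\Omega)\bigr)$ with $\tfrac1\sigma+\tfrac1\ell<1$ and with norm bounded uniformly over unit time intervals; the smoothing estimates of Lemma~\ref{semigroup} (equivalently, maximal parabolic regularity) then give $\|I(\cdot,t)\|_{L^\infty}\le C$ on $(0,T_m)$, which is \eqref{I-linfty-un-bdd}.
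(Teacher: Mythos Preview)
Your proposal is correct and follows essentially the same strategy as the paper: combine the two energy identities \eqref{SlnS-id}--\eqref{gradI-id}, absorb the cross term $\mathrm{(I)}$ by balancing a large weight on the $|\Delta I|^2$-side against the $\int_\Omega S^2$ contribution (the paper does the dual bookkeeping, multiplying the $|\nabla I|^2$-inequality by a large constant afterwards), and reduce $\mathrm{(III)}$ to controlling $\int_\Omega S^{2q}I^{2p}$ by H\"older and Young followed by Lemma~\ref{GN-by-L1}; the admissible parameter range \eqref{kappa-lambda-con} then drops out exactly as you describe.

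The only place where your route is a bit heavier than necessary is the $n=2$ upgrade to \eqref{I-linfty-un-bdd}. You propose to exploit the time-integrability $\int_t^{t+1}\|S\|_{L^\varrho}^{\varrho/(\varrho-1)}\le C$ coming from the dissipation bound and then feed an $L^\sigma_t L^\ell_x$ estimate for $f=\beta S^qI^p-(\gamma+\mu)I$ into the semigroup. This works, but the paper avoids the time variable entirely: since $q<1$ (forced by \eqref{kappa-lambda-con} when $n=2$), one can take $m=\frac{q+1}{2q}\in(1,\tfrac1q)$ and use H\"older with the \emph{pointwise} $L^1$-bound of $S$ from \eqref{L1-id} to get $\|S^qI^p(\cdot,\tau)\|_{L^m}\le\|S\|_{L^1}^q\|I\|_{L^{pm/(1-qm)}}^p\le C$ uniformly in $\tau$, after which a single application of Lemma~\ref{semigroup}(i) gives $\|I(\cdot,t)\|_{L^\infty}\le C$. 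This sidesteps the $L^\sigma_t$ bookkeeping altogether.
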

\begin{proof}
Due to  the $L^1$-boundedness of $S+I$, Lemma  \ref{GN-by-L1} and \eqref{Il2-by-gradI-l2}, we  bound the bad terms on the right-hand sides of \eqref{SlnS-id} and \eqref{gradI-id} in terms of the dissipation terms on their left-hand sides in the following manners:
\be\label{trouble-1}
\begin{split}
\chi\int_\Omega [S-\ln (S+1)]\Delta I&\leq \epsilon \int_\Omega [S-\ln (S+1)]^2+\frac{\chi^2}{4\epsilon}\int_\Omega |\Delta I|^2\\
&\leq \epsilon \int_\Omega (S+1)^2+\frac{\chi^2}{4\epsilon}\int_\Omega |\Delta I|^2\\
&\leq C \epsilon \int_\Omega \left|\nabla (S+1)^\frac{1}{2} \right|^2 + C\epsilon + \frac{\chi^2}{4\epsilon}\int_\Omega |\Delta I|^2,
\end{split} \ee
\be\label{trouble-2}
\begin{split}
\gamma  \int_\Omega I [\ln (S+1)+1]&\leq \gamma \int_\Omega\ln^2(S+1)+\gamma \int_\Omega  I^2+\gamma  \int_\Omega I\\
&\leq\gamma  \int_\Omega (S+I)+ \gamma \int_\Omega  I^2\\
&\leq\gamma  \int_\Omega (S_0+I_0)+ \gamma \epsilon \int_\Omega  |\nabla I|^2+\gamma  C_\epsilon
\end{split}
\ee
and
\be\label{trouble-3}
-\beta \int_\Omega S^ q I^ p  \Delta I\leq \frac{d_I}{4}  \int_\Omega|\Delta I|^2+\frac{\beta^2}{d_I}  \int_\Omega S^{2 q} I^{2 p }.
\ee
In the sequel, we apply H\"{o}lder inequality and Young's inequality with epsilon  to estimate the second integral on the right-hand of \eqref{trouble-3} as follows: In the simple case of
\be\label{kappa-lambda-cond0}
  q+ p \leq \frac{1}{2},
 \ee
 we have from H\"{o}lder inequality that
 \be\label{kappa-lambda-cond1-est0}
\int_\Omega S^{2 q} I^{2 p }\leq \|S\|_{L^1}^{2 q}\|I\|_{L^\frac{2 p }{1-2 q}}^{2 p }\leq \|S\|_{L^1}^{2 q}\|I\|_{L^1}^{2 p }|\Omega|^{(1-2 q-2 p )}\leq \|S_0+I_0\|_{L^1}|\Omega|^{(1-2 q-2 p )}.
\ee
In the opposite scenario of \eqref{kappa-lambda-cond0}, we break our argument into two cases:

\textbf{Case of  $n=1$}. Because of  \eqref{kappa-lambda-con},  we can take $r\in(1,\infty)$ such that
$$
\max\left\{1, \ \ 2 p , \ \ \frac{6 p }{15-10 q-4 p }\right\}< r<\min\left\{\frac{2 p }{(2 p -5)^+},\ \  \frac{2 p }{(1-2 q)^+}\right\},
 $$
 which imply
$$
r>1, \    r>2 p ,  \   2 p <\frac{5r}{r-1}, \
\frac{2 q r}{r-2 p }=:s>1, \  2q<\frac{10 q r}{5r-2 p (r-1)}<\frac{2s}{s-1};
$$
with such chosen $r$,  we utilize  H\"{o}lder and Young's inequality with epsilon, \eqref{S-by-Gn1} and \eqref{I-by-Gn1} to estimate, for any $\epsilon, \eta>0$, that
\be\label{kappa-lambda-cond1-est6-1d}
\begin{split}
  \int_\Omega S^{2 q} I^{2 p }&\leq \left\|S+1\right\|_{L^\frac{2 q r}{r-2 p }}^{2 q}\|I\|_{L^r}^{2 p }\\
  &\leq \epsilon \|I\|_{L^r}^\frac{5r}{r-1}+ C_\epsilon\left\|S+1\right\|_{L^\frac{2 q r}{r-2 p }}^\frac{10 q r}{5r-2 p (r-1)}\\
  &\leq C\epsilon\|\Delta I\|_{L^2}^2+C\epsilon \|\nabla I\|_{L^2}^2+C\epsilon +\eta  \left\|\nabla (S+1)^\frac{1}{2}\right\|_{L^2}^2+C_{\epsilon, \eta}.
\end{split}
\ee

\textbf{Case of $n=2$}.  In this case,  under   \eqref{kappa-lambda-con}, we will show that \eqref{kappa-lambda-cond1-est6-1d} type estimate also holds. Indeed,   because of \eqref{kappa-lambda-con}, we can take $r\in(1,\infty)$ such that
$$
 \max\left\{1, \  \ 2p , \ \ \frac{2 p }{3-3 q- p }\right\}< r<\min\left\{\frac{2p }{(2 p -3)^+},\ \  \frac{2 p }{(1-2 q)^+}\right\},
 $$
 which imply
$$
r>1, \ \  r>2 p ,  \ \  2 p <\frac{3r}{r-1}, \ \
\frac{2 q r}{r-2 p }=:s>1, \ \ 2q<\frac{6 q r}{3r-2 p (r-1)}<\frac{s}{s-1};
$$
with such chosen $r$,  we utilize  H\"{o}lder and Young's inequality with epsilon, \eqref{S-by-Gn} and \eqref{I-by-Gn} to infer, for any $\epsilon, \eta>0$, that
\be\label{kappa-lambda-cond1-est6}
\begin{split}
  \int_\Omega S^{2 q} I^{2 p }&\leq \|S+1\|_{L^\frac{2 q r}{r-2 p }}^{2 q}\|I\|_{L^r}^{2 p }\\
  &\leq \epsilon\|I\|_{L^r}^\frac{3r}{r-1}+ C_\epsilon\|S+1\|_{L^\frac{2 q r}{r-2 p }}^\frac{6 q r}{3r-2 p (r-1)}\\
  &\leq C\epsilon\|\Delta I\|_{L^2}^2+C\epsilon \|\nabla I\|_{L^2}^2+C\epsilon +\eta  \left\|\nabla (S+1)^\frac{1}{2}\right\|_{L^2}^2+C_{\epsilon, \eta}.
\end{split}
\ee

In summary, we substitute \eqref{trouble-1}, \eqref{trouble-2}, \eqref{trouble-3}, \eqref{kappa-lambda-cond1-est6-1d}, \eqref{kappa-lambda-cond1-est0} and \eqref{kappa-lambda-cond1-est6} into \eqref{SlnS-id} and \eqref{gradI-id} to conclude, for any $\epsilon, \eta>0$, there exist constants  $M, M_\epsilon, L, L_\eta>0$ such that
\be\label{SlnS-est}
\frac{d}{dt}\int_\Omega (S+1)\ln (S+1)
+3d_S\int_\Omega \left|\nabla (S+1)^\frac{1}{2}\right|^2 \leq \gamma  \epsilon\int_\Omega |\nabla I|^2+M\chi^2\int_\Omega |\Delta I|^2+M_\epsilon,
\ee
and
\be\label{gradI-est}
\frac{d}{dt}\int_\Omega |\nabla I|^2+(\gamma+\mu)\int_\Omega |\nabla I|^2+d_I\int_\Omega |\Delta I|^2\leq  L\eta\int_\Omega \left|\nabla (S+1)^\frac{1}{2}\right|^2+L_\eta.
\ee
Multiplying \eqref{SlnS-est} by $d_I$ and \eqref{gradI-est} by $2M\chi^2$ and then choosing sufficiently small $\epsilon$ and $\eta$ and then using Lemma \ref{GN-by-L1}, we finally end up with, for some $K>0$,
\be\nonumber
\begin{split}
&\frac{d}{dt}\int_\Omega\left[d_I(S+1)\ln (S+1)+2M\chi^2|\nabla I|^2\right]+d_I\int_\Omega (S+1)\ln (S+1) \\
&\ \ +(\gamma+\mu) M\chi^2\int_\Omega |\nabla I|^2+d_Sd_I\int_\Omega \left|\nabla (S+1)^\frac{1}{2}\right|^2 + d_I M\chi^2\int_\Omega |\Delta I|^2\leq K.
\end{split}
\ee
Solving this standard Grownall inequality, we directly obtain the uniform estimate \eqref{SlnS-grad I-est}.

In the case of $n=1$,  by \eqref{SlnS-grad I-est} and  \eqref{Il2-by-gradI-l2}, we know that  the $W^{1,2}$-norm of $I$ is uniformly bounded, and thus \eqref{I-linfty-un-bdd} follows from the continuous embedding of  $W^{1,2}(\Omega)$  into  $L^\infty(\Omega)$.

Next, in the case of $n=2$,   the $W^{1,2}$-boundedness of $I$  implied by  \eqref{SlnS-grad I-est} and   \eqref{Il2-by-gradI-l2} yield first  the uniform boundedness of $\|I\|_{L^q}$ for any $q\in (0,\infty)$:
\be\label{I-lq-un-bdd}
\|I(\cdot,t)\|_{L^q}\leq C_q, \ \ \ \forall\, t\in (0, T_m).
\ee
 To show the $L^\infty$-boundedness of $I$, we use semi-group representation to rewrite $I$ as
\be\label{I-rewriting}
   I=e^{t(d_I\Delta-(\gamma+\mu))}I_0+\beta\int_0^te^{(t-s)(d_I\Delta-(\gamma+\mu))}S^ q I^ p  ds.
 \ee
 Based on the  $L^r$-$L^s$ type estimates in Lemma \ref{semigroup},  the $L^1$-boundedness of $S$, \eqref{I-lq-un-bdd} and the  fact that $ q<1$ ensured by \eqref{kappa-lambda-con}, we apply  H\"{o}lder interpolation inequality  to estimate
   \begin{align*}
   \|I(\cdot,t)\|_{L^\infty}&\leq \left\|e^{t(d_I\Delta-(\gamma+\mu))}I_0\right\|_{L^\infty} + \beta \int_0^t \left\| e^{(t-s)(d_I\Delta-(\gamma+\mu))}S^ q I^ p \right\|_{L^\infty} ds\\
   &\leq C\|I_0\|_{L^\infty}+C\int_0^t\left[1+(t-s)^{
   -\frac{2 q}{ q+1}}\right]e^{-(\gamma+\mu)(t-s)}\left\|S^ q I^ p \right\|_{L^\frac{ q+1}{2 q}}ds\\
   &\leq C+C\int_0^t\left[1+(t-s)^{
   -\frac{2 q}{ q+1}}\right]e^{-(\gamma+\mu)(t-s)}\|S^ q\|_{L^\frac{1}{ q}} \|I^ p \|_{L^\frac{ q+1}{(1- q) q}}ds\\
   &\leq C+C\int_0^t\left[1+(t-s)^{
   -\frac{2 q}{ q+1}}\right]e^{-(\gamma+\mu)(t-s)}ds\\
   &\leq C+C\int_0^\infty\left(1+\tau^{
   -\frac{2 q}{ q+1}}\right)e^{-(\gamma+\mu)\tau}d\tau\leq C,
   \end{align*}
   which precisely gives rise to \eqref{I-linfty-un-bdd}.
\end{proof}

Based on the basic but crucial  starting boundedness provided in Lemma \ref{SlnS-gradI-bdd},  we use test-procedure to obtain the $L^2$-bound for $S$ and $|\nabla I|^4$ by estimating the combined  time evolution of $\|S\|_{L^2}^2$ and $\|\nabla I\|_{L^4}^4$.

\begin{lemma}\label{L2-bdd}
Let \eqref{kappa-lambda-con} hold.  Then the local-in-time solution of \eqref{SIS-mass-C} satisfies
\be\label{S-l2-grad I-l4-est}
 \|S(\cdot,t)\|_{L^2}+\|\nabla I(\cdot,t)\|_{L^4}\leq C, \quad \quad \forall\, t\in (0, T_m).
\ee
\end{lemma}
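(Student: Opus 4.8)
The plan is to build a coupled pair of differential inequalities, one for $\int_\Omega S^2$ obtained by testing the $S$-equation against $S$, and one for $\int_\Omega|\nabla I|^4$ obtained from the standard $L^4$-gradient computation for $I$, and then to close them by a Gronwall argument. Throughout, I would draw on the a priori information already produced in Lemma \ref{SlnS-gradI-bdd}, namely $\|I(\cdot,t)\|_{L^\infty}\le C$, $\|\nabla I(\cdot,t)\|_{L^2}\le C$ and $\|(S+1)\ln(S+1)\|_{L^1}\le C$ (hence also $\|S(\cdot,t)\|_{L^1}\le C$ via \eqref{L1-id}), as well as on the space-time bounds $\int_0^t\int_\Omega|\Delta I|^2\le C$ and $\int_0^t\int_\Omega|\nabla(S+1)^{1/2}|^2\le C$ that come out of the Gronwall step in the proof of that lemma. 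These last two ingredients are what makes it possible to handle the cross-diffusion term for \emph{arbitrary} $\chi$.

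Testing the $S$-equation in \eqref{SIS-mass-C} against $S$ and integrating by parts, the cross-diffusion contribution becomes $-\chi\int_\Omega S\,\nabla S\cdot\nabla I=\tfrac{\chi}{2}\int_\Omega S^2\Delta I$, the term $-\beta\int_\Omega S^{q+1}I^p\le0$ is simply discarded, and $\gamma\int_\Omega SI\le\gamma\|I\|_{L^\infty}\|S\|_{L^1}\le C$, leaving
\[
\frac{d}{dt}\int_\Omega S^2+2d_S\int_\Omega|\nabla S|^2\le \chi\int_\Omega S^2\Delta I+C.
\]
The term $\chi\int_\Omega S^2\Delta I\le\chi\|S\|_{L^4}^2\|\Delta I\|_{L^2}$ is the delicate one: I would estimate $\|S\|_{L^4}^2$ by the one- or two-dimensional Gagliardo-Nirenberg inequality --- in the planar borderline case using the logarithmic Gagliardo-Nirenberg/Sobolev inequality, which, thanks to the $L\log L$ bound on $S$, carries a small coefficient in front of $\|\nabla S\|_{L^2}^2$ (or $\|\nabla(S+1)^{1/2}\|_{L^2}^2$) --- and then apply Young's inequality so as to absorb the gradient part into the dissipation and leave a remainder that, after invoking the Poincar\'e-Wirtinger inequality together with $\|S\|_{L^1}\le C$, is controlled by an integrable-in-time multiple of $\int_\Omega S^2$ plus a constant.

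For the second inequality I would run the by-now-routine computation for $\int_\Omega|\nabla I|^4$: expanding the time derivative, using $\nabla I\cdot\nabla\Delta I=\tfrac12\Delta|\nabla I|^2-|D^2I|^2$, integrating by parts with the boundary term controlled through the pointwise inequality $\partial_\nu|\nabla I|^2\le2\kappa|\nabla I|^2$ on $\partial\Omega$ and a trace estimate, and retaining the dissipative terms $d_I\int_\Omega|\nabla I|^2|D^2I|^2$, $\tfrac{d_I}{2}\int_\Omega\bigl|\nabla|\nabla I|^2\bigr|^2$ and $(\gamma+\mu)\int_\Omega|\nabla I|^4$. The forcing generated by $\beta S^qI^p-(\gamma+\mu)I$ is, after one more integration by parts and the bound $\|I\|_{L^\infty}\le C$, absorbed into these dissipative terms up to a remainder $C\int_\Omega S^{2q}|\nabla I|^2+C$; writing $w:=|\nabla I|^2$ so that $\|w\|_{L^1}=\|\nabla I\|_{L^2}^2\le C$, H\"older's inequality combined with Gagliardo-Nirenberg estimates for $S$ (based on $\|S\|_{L^1}\le C$, again refined by the $L\log L$ bound when $n=2$) and for $w$ (based on $\|w\|_{L^1}\le C$ and $\|\nabla w\|_{L^2}$), and a concluding Young's inequality, reduce $\int_\Omega S^{2q}|\nabla I|^2$ to a small multiple of $\int_\Omega|\nabla S|^2+\int_\Omega\bigl|\nabla|\nabla I|^2\bigr|^2$ plus a constant. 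It is precisely here that the hypotheses \eqref{kappa-lambda-con} are needed: they are the conditions under which the interpolation exponents appearing in these Gagliardo-Nirenberg estimates remain admissible (all the relevant powers below $1$).

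Adding the two inequalities, with the absorption parameters taken small and the remaining lower-order terms absorbed by means of the a priori and space-time bounds recalled above, one is led to a Gronwall-type differential inequality for $\int_\Omega S^2+\int_\Omega|\nabla I|^4$ whose integration yields the uniform bound \eqref{S-l2-grad I-l4-est}. I expect the heart of the matter to be the cross-diffusion term $\chi\int_\Omega S\,\nabla S\cdot\nabla I$: in contrast to the standard-incidence models (where $\nabla I$ is already bounded), here $I$ and $\nabla I$ are controlled no better than in Lemma \ref{SlnS-gradI-bdd}, so $S$ and $\nabla I$ must be estimated simultaneously against two dissipation mechanisms of unequal strength, and the scheme closes for arbitrary $\chi$ only through the joint use of the low spatial dimension, the $L\log L$ bound on $S$, and the space-time $L^2$-bound on $\Delta I$ --- all within the parameter window \eqref{kappa-lambda-con}.
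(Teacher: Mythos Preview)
Your outline for the $\int_\Omega|\nabla I|^4$ inequality is essentially the paper's, and your identification of the cross-diffusion term as the crux is correct. The gap is in how you propose to control that term.

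You rewrite $-\chi\int_\Omega S\nabla S\cdot\nabla I=\tfrac{\chi}{2}\int_\Omega S^2\Delta I\le\tfrac{|\chi|}{2}\|S\|_{L^4}^2\|\Delta I\|_{L^2}$ and then appeal to (a) a logarithmic Gagliardo--Nirenberg inequality to obtain a \emph{small} coefficient in front of $\|\nabla S\|_{L^2}^2$ when bounding $\|S\|_{L^4}^2$, and (b) a global space-time bound $\int_0^t\int_\Omega|\Delta I|^2\le C$ from Lemma~\ref{SlnS-gradI-bdd}. Neither ingredient is available. For (b), the final Gronwall step in Lemma~\ref{SlnS-gradI-bdd} is of the form $Y'+Y+Z\le K$, which yields $\sup_t\int_t^{t+1}\!\int_\Omega|\Delta I|^2\le C$ but \emph{not} $\int_0^\infty\!\int_\Omega|\Delta I|^2<\infty$; with only unit-interval bounds the resulting Gronwall coefficient is merely $O(t)$-integrable and produces at best exponential growth. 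For (a), in two dimensions the interpolation $\|S\|_{L^4}^4\le C\|\nabla S\|_{L^2}^2\|S\|_{L^2}^2$ is sharp, and the $L\log L$ bound on $S$ does not control $\|S\|_{L^2}$ (take $S=M\mathbf{1}_E$ with $|E|=M^{-1}$), so no small prefactor on $\|\nabla S\|_{L^2}^2$ can be extracted this way.

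The paper closes the loop differently: after the Young split $-\chi\int_\Omega S\nabla S\cdot\nabla I\le \tfrac{d_S}{4}\|\nabla S\|_{L^2}^2+\tfrac{\chi^2}{d_S}\int_\Omega S^2|\nabla I|^2$, the remaining term $\int_\Omega S^2|\nabla I|^2$ is bounded (via $\int_\Omega S^2|\nabla I|^2\le \epsilon\int_\Omega S^3+C_\epsilon\int_\Omega|\nabla I|^6$ in $n=2$, and an analogous estimate in $n=1$) by $C\epsilon\|\nabla S\|_{L^2}^2+C_\epsilon\bigl\|\nabla|\nabla I|^2\bigr\|_{L^2}^2+C$. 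The point is that the \emph{higher-order} dissipation $\bigl\|\nabla|\nabla I|^2\bigr\|_{L^2}^2$ appearing here is produced by the $|\nabla I|^4$ equation itself, not by Lemma~\ref{SlnS-gradI-bdd}. One then multiplies the $|\nabla I|^4$ inequality by the large constant $6C_\epsilon/d_I$ to swallow $C_\epsilon\bigl\|\nabla|\nabla I|^2\bigr\|_{L^2}^2$, while the small $\eta\|\nabla S\|_{L^2}^2$ fed back from \eqref{s-I-couple-12d} is absorbed into $\tfrac{d_S}{2}\|\nabla S\|_{L^2}^2$. This two-way coupling, rather than any recourse to lower-order space-time information, is what makes the argument $\chi$-independent and uniform in time.
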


\begin{proof}
We test the $S$-equation in \eqref{SIS-mass-C} by $S$  and then integrate by parts to derive  that
\begin{align*}
&\frac{1}{2}\frac{d}{dt}\int_\Omega S^2+ d_S\int_\Omega|\nabla S|^2+\beta \int_\Omega S^{1+ q} I^ p \\ & = -\chi \int_\Omega S\nabla S \cdot \nabla I +\gamma  \int_\Omega S I \\
&\leq \frac{d_S}{4}\int_{\Omega}|\nabla S|^2 + \frac{\chi^2}{d_S} \int_{\Omega}S^2|\nabla I|^2 + \gamma \|I\|_{L^\infty}\int_{\Omega} S,
\end{align*}
which, together with \eqref{L1-id} and \eqref{I-linfty-un-bdd}, gives
\begin{align}
\label{S-l2-id}
\frac{1}{2}\frac{d}{dt}\int_\Omega S^2+ \frac{3d_S}{4}\int_\Omega|\nabla S|^2+\beta \int_\Omega S^{1+ q} I^ p \leq \frac{\chi^2}{d_S} \int_{\Omega}S^2|\nabla I|^2 +C_1.
\end{align}
Next, taking  gradient of the $I$-equation  and then multiplying it by $\nabla I|\nabla I|^2$ and integrating  by parts, we conclude that
\be\label{gradI-l4-id}\begin{split}
&\frac{1}{2}\frac{d}{dt} \int_\Omega |\nabla I|^4 + d_I \int_\Omega \left|\nabla \left|\nabla I\right|^2\right|^2 + 2 d_I\int_\Omega  \left|\nabla I\right|^{2} \left|D^2I \right|^2 + 2(\gamma+\mu)\int_\Omega \left|\nabla I\right|^{4}\\
&=\int_{\partial\Omega}  |\nabla I|^{2}\frac{\partial}{\partial \nu} \left|\nabla I\right|^2 - 2\beta\int_\Omega  S^ q I^ p \Delta  I|\nabla I|^{2} - 2\beta\int_\Omega  S^ q I^ p \nabla  I \cdot \nabla |\nabla I|^{2}.  \end{split}
\ee
In the above, we have used the identity
 $$2 \nabla I \cdot \nabla \Delta I = \Delta |\nabla I|^2 - 2 \left|D^2 I\right|^2,$$
where $\left|D^2 I\right|^2 = \sum_{i,j=1}^n\left|\frac{\partial^2 I}{\partial x_i \partial x_j}\right|^2$.
For the boundary integral in \eqref{gradI-l4-id}, we can handle it (cf.  \cite{ISY14, TWZAMP, Xiang16-ppt, Xiangpre2}) as follows
\be\label{com-est-4}
\begin{split}
\int_{\partial\Omega}  \left|\nabla I\right|^{2} \frac{\partial}{\partial \nu} \left|\nabla I\right|^2&\leq \epsilon \int_\Omega \left|\nabla \left|\nabla I\right|^2\right|^2 + C_\epsilon \left(\int_{\Omega} |\nabla I|^2\right)^2\\
&\leq \epsilon \int_\Omega \left|\nabla \left|\nabla I\right|^2\right|^2+C_\epsilon,\quad \quad  \forall \epsilon>0.
\end{split}
\ee
For the remaining terms in \eqref{gradI-l4-id}, we apply H\"{o}lder and Young's inequality to deduce that
\be\label{com-est-1}\begin{split}
& -2\beta\int_\Omega  S^ q I^ p \Delta  I|\nabla I|^{2}-2\beta\int_\Omega  S^ q I^ p \nabla  I\cdot \nabla \left|\nabla I\right|^{2}  \\
 &\leq  \frac{2d_I}{n} \int_\Omega   \left|\Delta I\right|^2  \left|\nabla I\right|^2
  +\frac{n \beta^2}{2d_I}\|I\|_{L^\infty}^{2 p }\int_\Omega S^{2 q}\left|\nabla I\right|^2 \\
  &\ + \frac{d_I}{2}\int_\Omega \left|\nabla \left|\nabla I\right|^2\right|^2 +\frac{2\beta^2}{d_I}\|I\|_{L^\infty}^{2 p }\int_\Omega S^{2 q} \left|\nabla I\right|^2  \\
 &\leq  2 d_I\int_\Omega   \left|D^2 I\right|^2 \left|\nabla I\right|^2 + \frac{d_I}{2}\int_\Omega \left|\nabla \left|\nabla I\right|^2\right|^2  + \frac{(n+4)\beta^2}{2d_I}\|I\|_{L^\infty}^{2 p } \int_\Omega S^{2 q}\left|\nabla I\right|^2.
 \end{split}
 \ee
Now, we claim, for $n=1,2$ and for any $\epsilon, \eta>0$, there exist $C_2,\ C_{\epsilon, \eta}>0$ such that
\be\label{s-I-couple-12d}
\int_\Omega S^{2 q}| \nabla   I|^2
 \leq C_2 \epsilon \|\nabla |\nabla I|^2\|_{L^2}^2 + C_2 \epsilon+\eta\|\nabla S\|_{L^2}^2 + C_{\epsilon, \eta}.
\ee
We  distinguish the case of $n=1$ and $n=2$ to prove this claim.

\textbf{Case of $n=1$}. Given the $(L^1, L^2)$-boundness of $(S, \nabla I)$ in \eqref{SlnS-grad I-est}, upon twice applications of the 1-D G-N interpolation inequality, we find   that
\be\label{S-I-by-grads-gradI-1d}
\left\{\forall \eta>0,\   r\in(1,\infty],  \  s<\frac{3r}{r-1}\right\}\Longrightarrow\begin{cases}
\|S\|_{L^r}^s\leq \eta\|\nabla S\|_{L^2}^2+C_\eta, \\[0.25cm]
\left\|\left|\nabla I\right|^2\right\|_{L^r}^\frac{3r}{r-1}\leq C_3 \left\|\nabla \left|\nabla I\right|^2\right\|_{L^2}^2 + C_3.
\end{cases}
\ee
Recall that $ q<\frac{3}{2}$ forced by \eqref{kappa-lambda-con},   we then can fix $r\in (1, \infty)$ such that
$$
1<r<\frac{1}{(1-2 q)^+} \Longleftrightarrow \left\{\frac{2 q r}{r-1}:=s>1, \ \ 2q<\frac{6 q r}{2r+1}<\frac{3s}{s-1}\right\};
$$
with such chosen  $r$, we deduce from  \eqref{S-I-by-grads-gradI-1d}, H\"{o}lder and Young's inequality with epsilon, for any $\epsilon, \eta>0$,  that
\begin{align*}
\int_\Omega S^{2 q} \left| \nabla I\right|^2 & \leq \left\|S\right\|_{L^\frac{2 q r}{r-1}}^{2 q}\left\|\left|\nabla I\right|^2\right\|_{L^r}\\
&\leq \epsilon \left\|\left|\nabla I\right|^2\right\|_{L^r}^\frac{3r}{r-1} + C_\epsilon \left\|S\right\|_{L^\frac{2 q r}{r-1}}^\frac{6 q r}{2r+1}\\
&\leq C_4 \epsilon \left\|\nabla \left|\nabla I\right|^2\right\|_{L^2}^2 + C_4\epsilon+\eta \left\|\nabla S\right\|_{L^2}^2+C_{\epsilon, \eta},
\end{align*}
 yielding trivially \eqref{s-I-couple-12d} in the case of $n=1$. In particular, this implies
\begin{align}
\label{2022-1}
\int_\Omega S^{2} \left| \nabla I\right|^2 \leq C_4 \epsilon \left\|\nabla \left|\nabla I\right|^2\right\|_{L^2}^2 + C_4\epsilon+\eta \left\|\nabla S\right\|_{L^2}^2+C_{\epsilon, \eta}.
\end{align}

\textbf{Case of $n=2$}. The Young's inequality with epsilon entails
\be\label{com-est-2}
\int_\Omega S^{2 q}| \nabla   I|^2\leq \epsilon \int_\Omega  |\nabla   I|^6 + C_\epsilon \int_\Omega S^{3 q},\quad \quad \forall \epsilon>0.
\ee
In view of the  boundedness of $\|\nabla I\|_{L^2}$ by \eqref{SlnS-grad I-est}, the 2-D G-N inequality shows
\be\label{com-est-3}
\begin{split}
 \int_\Omega  \left|\nabla  I\right|^6=\left\| \left|\nabla I\right|^2\right\|_{L^3}^3 & \leq  C_5 \left(\left\| \nabla \left|\nabla I\right|^2\right\|_{L^2}^2 \left\|\left|\nabla I\right|^2\right\|_{L^1}  + \left\|\left|\nabla I\right|^2\right\|_{L^1}^3\right) \\
 &\leq C_6 \left\|\nabla \left|\nabla I\right|^2\right\|_{L^2}^2 + C_6.
 \end{split}
\ee
 Using the 2-D G-N  inequality  and noting that $ q<1$ implied by \eqref{kappa-lambda-con}, we can easily infer
\be\label{Sl3-bdd by}
\int_\Omega  S^{3 q}\leq \eta \int_\Omega |\nabla S |^2+C_\eta, \quad \quad \forall \eta>0.
\ee
Substituting \eqref{com-est-3} and \eqref{Sl3-bdd by} into \eqref{com-est-2}, we discover that \eqref{s-I-couple-12d}  holds for  $n=2$ as well.

Thus, by suitably choosing $\epsilon>0$, we derive from \eqref{gradI-l4-id}, \eqref{com-est-4}, \eqref{com-est-1} and \eqref{s-I-couple-12d} that
\begin{align}
&\frac{1}{2}\frac{d}{dt} \int_\Omega |\nabla I|^4 + \frac{d_I}{3} \int_\Omega \left|\nabla \left|\nabla I\right|^2\right|^2+ 2(\gamma+\mu)\int_\Omega \left|\nabla I\right|^{4}
\leq \eta \int_{\Omega}|\nabla S|^2 + C_\eta.
\label{2022-2}
\end{align}

Now, we shall estimate the first item on the right-hand-side of \eqref{S-l2-id}. The case of $n=1$ has been treated in \eqref{2022-1}. If $n=2$, instead of \eqref{Sl3-bdd by}, it holds
\begin{align}
\int_\Omega  S^{3}\leq C_7 \int_\Omega |\nabla S |^2 + C_7.
\label{2022-3}
\end{align}
Then, in light of \eqref{2022-3} and \eqref{com-est-3}, for any $\epsilon>0$, similar to \eqref{com-est-2}, we have
\begin{align}
\int_\Omega S^2 \left|\nabla I\right|^2 &\leq \epsilon\int_\Omega S^3 + C_\epsilon \int_\Omega \left| \nabla I\right|^6 \nonumber\\
& \leq C_7 \epsilon \int_\Omega \left|\nabla S\right|^2 + C_7 \epsilon + C_\epsilon  \left\|\nabla \left|\nabla I\right|^2\right\|_{L^2}^2 + C_7.
\label{2022-4}
\end{align}
Recalling \eqref{2022-1}, we conclude that \eqref{2022-4} still holds if $n=1$. Inserting this inequality into \eqref{S-l2-id} and choosing $\epsilon>0$ appropriately, we arrive at
\begin{align}
\frac{1}{2}\frac{d}{dt}\int_\Omega S^2+ \frac{d_S}{2}\int_\Omega|\nabla S|^2+\beta \int_\Omega S^{1+ q} I^ p \leq C_8 \int_\Omega \left| \nabla \left|\nabla I\right|^2 \right| +C_8.
\label{2022-5}
\end{align}
Multiplying \eqref{2022-2} by $\frac{6 C_8}{d_I}$ and combining \eqref{2022-5} so that the first term on the right-hand-side of \eqref{2022-5} is annihilated and then choosing $\eta>0$ with $\frac{d_S}{2} - \frac{6C_8}{d_I}\eta = \frac{d_S}{4}$, we find
\begin{align}
\frac{d}{dt}\int_\Omega \left(S^2 + \left|\nabla I\right|^4 \right) + C_9\int_\Omega \left(\left|\nabla S\right|^2 + S^{1+q}I^p + \left|\nabla I\right|^4 +
\left|\nabla \left|\nabla I\right|^2 \right| \right) \leq C_{10}.
\label{2022-6}
\end{align}
Finally, in light of the consequence of $n$-D G-N inequality, it follows readily  that
$$
\forall \, \epsilon>0, \ \exists \, C_\epsilon>0 \ \text{ s.t. } \int_\Omega S^2\leq \epsilon \int_\Omega |\nabla S|^2+C_\epsilon,
$$
we derive from \eqref{2022-6} a key differential inequality as follows:
\begin{align*}
\frac{d}{dt}\int_\Omega \left(S^2 + \left|\nabla I\right|^4 \right) + C_{11}\int_\Omega \left(S^2 + \left|\nabla S\right|^2 + S^{1+q}I^p + \left|\nabla I\right|^4 +
\left|\nabla \left|\nabla I\right|^2 \right| \right) \leq C_{12}.
\end{align*}
which, upon being solved, yields readily \eqref{S-l2-grad I-l4-est}.
\end{proof}

Armed with the global boundedness information provided by Lemmas \ref{SlnS-gradI-bdd} and \ref{L2-bdd}, we are now at the  position to present the uniform  boundedness of $\|S\|_{L^\infty}$ and $\|I\|_{W^{1,\infty}}$ and thus the global existence for \eqref{SIS-mass-C} in  dimensions one and two.

\begin{theorem}
\label{glob bdd3}
Let $\Omega\subset\mathbb{R}^n \ (n=1,2)$ be a bounded and smooth domain and, let \eqref{kappa-lambda-con} hold. Suppose the initial data fulfills \eqref{initial data-req}. Then the unique classical solution of \eqref{SIS-mass-C}  exists globally in time and is uniformly bounded in the sense   of \eqref{S-I-bdd-fin1}.
\end{theorem}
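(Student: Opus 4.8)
The point of departure is the a priori information already in hand: by Lemmas~\ref{SlnS-gradI-bdd} and \ref{L2-bdd}, the quantities $\|I(\cdot,t)\|_{L^\infty}$, $\|S(\cdot,t)\|_{L^2}$ (and $\|\nabla I(\cdot,t)\|_{L^4}$) are bounded on $(0,T_m)$ by constants independent of $T_m$. The plan is to bootstrap these to uniform bounds for $\|S(\cdot,t)\|_{L^\infty}$ and $\|I(\cdot,t)\|_{W^{1,\infty}}$, still with $T_m$-independent constants; the extensibility criterion \eqref{extend} then forces $T_m=\infty$ and yields \eqref{S-I-bdd-fin1}. Throughout, after shifting the initial time as in the remark following Lemma~\ref{local-in-time}, we may assume $r_0=\infty$, so that the heat semigroup acting on the (now smooth) initial data causes no trouble near $t=0$.

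First I would record the elementary but crucial observation that \eqref{kappa-lambda-con} forces $q<\tfrac2n$ in both dimensions (indeed $q<\tfrac32$ when $n=1$ since $10q+4p<15$, and $q<1$ when $n=2$ since $3q+p<3$). Writing the $I$-equation through the variation-of-constants formula, taking the gradient, and invoking Lemma~\ref{semigroup}(ii) with $f=\beta S^qI^p-(\gamma+\mu)I$, I would use $\|I\|_{L^\infty}\le C$ and $\|S\|_{L^2}\le C$ to see that $f(\cdot,\tau)$ is bounded in $L^{2/q}(\Omega)$ uniformly in $\tau$, since $\|S^qI^p\|_{L^{2/q}}\le\|I\|_{L^\infty}^p\|S\|_{L^2}^q$. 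The associated time integral carries the singular weight $(t-\tau)^{-\frac12-\frac n2\cdot\frac q2}$, and the exponent $-\tfrac12-\tfrac{nq}{4}$ exceeds $-1$ precisely because $q<\tfrac2n$; hence $\int_0^\infty\bigl(1+\sigma^{-\frac12-\frac{nq}{4}}\bigr)e^{-\lambda_1\sigma}\,d\sigma<\infty$, and therefore $\|\nabla I(\cdot,t)\|_{L^\infty}\le C$ on $(0,T_m)$.

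With $\nabla I\in L^\infty$ in hand, I would turn to the $S$-equation via its semigroup representation, discarding the nonpositive term $-\beta S^qI^p$ and handling $\gamma I$ exactly as in the derivation of \eqref{S-infty-est} (using $\|I\|_{L^\infty}\le C$). The chemotactic contribution $\chi\int_0^te^{(t-\tau)d_S\Delta}\nabla\cdot(S\nabla I)(\cdot,\tau)\,d\tau$ is controlled by Lemma~\ref{semigroup}(iv) applied to $f=S\nabla I\in L^2(\Omega)$. For $n=1$ the relevant exponent is $-\tfrac12-\tfrac n4=-\tfrac34>-1$, which already gives $\|S(\cdot,t)\|_{L^\infty}\le C$. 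For $n=2$ that exponent degenerates to $-1$, so I would first run the same computation with $L^\infty$ replaced by $L^{r_1}$ for arbitrary finite $r_1$ (the admissible exponent being $-1+\tfrac1{r_1}>-1$), obtaining $\|S(\cdot,t)\|_{L^{r_1}}\le C$ for every $r_1<\infty$; then $S\nabla I\in L^{r_1}$ with $r_1>2=n$, and a second application of Lemma~\ref{semigroup}(iv), now with exponent $-\tfrac12-\tfrac1{r_1}>-1$, yields $\|S(\cdot,t)\|_{L^\infty}\le C$. Finally, $\beta S^qI^p-(\gamma+\mu)I$ is now bounded in $L^\infty(\Omega)$, so one more application of Lemma~\ref{semigroup}(ii) or (iii) upgrades the gradient estimate to $\|I(\cdot,t)\|_{W^{1,\infty}}\le C$, in particular $\|I(\cdot,t)\|_{W^{1,r_0}}\le C$; combined with \eqref{extend} this finishes the argument.

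The main obstacle is the borderline nature of the chemotactic term when $n=2$: starting only from $\|S\|_{L^2}\le C$ and $\|\nabla I\|_{L^\infty}\le C$, the direct semigroup estimate of $e^{(t-\tau)d_S\Delta}\nabla\cdot(S\nabla I)$ into $L^\infty$ is exactly critical, so $\|S\|_{L^\infty}$ cannot be reached in a single stroke and one must pass through the scale $\|S\|_{L^{r_1}}$, $r_1<\infty$. A secondary technical point is to check at each step that the constants produced depend only on the data and not on $T_m$; this is precisely what makes the extensibility criterion applicable and hence the solution global and uniformly bounded.
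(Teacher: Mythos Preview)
Your proposal is correct and follows essentially the same route as the paper: first use the $(L^2,L^\infty)$-bounds from Lemmas~\ref{SlnS-gradI-bdd}--\ref{L2-bdd} together with $q<\tfrac{2}{n}$ to get $\|\nabla I\|_{L^\infty}\le C$ via the semigroup representation of $I$, then bootstrap the $S$-equation through Lemma~\ref{semigroup}(iv), directly to $L^\infty$ when $n=1$ and via an intermediate $L^{r_1}$-step when $n=2$. The only cosmetic differences are that the paper keeps and estimates the $-\beta S^qI^p$ term (in $L^{2/q}$ or $L^{3/q}$) rather than discarding it by positivity, and fixes the concrete choice $r_1=3$ in the two-dimensional intermediate step.
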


\begin{proof}
First, we utilize the $(L^2, L^\infty)$-boundedness of $(S, I)$ in \eqref{S-l2-grad I-l4-est} and  \eqref{I-linfty-un-bdd},  the $L^r$-$L^s$ type smoothing estimates in Lemma \ref{semigroup} and the fact $n q<2$ implied by \eqref{kappa-lambda-con} to deduce from the variation-of-constants formal for $I$ in  \eqref{I-rewriting} that, for all $t\in (0,T_m)$,
\be\label{gradI-est+}
\begin{split}
\|\nabla I(\cdot,t)\|_{L^\infty}&\leq \|\nabla e^{t(d_I\Delta-(\gamma+\mu))}I_0\|_{L^\infty}+\beta\int_0^t\|\nabla  e^{(t-s)(d_I\Delta-(\gamma+\mu))}S^ q I^ p \|_{L^\infty} ds\\
   &\leq C\|\nabla I_0\|_{L^\infty}+C\int_0^t\left[1+(t-s)^{-\frac{1}{2}
   -\frac{n q}{4}}\right]e^{-(\gamma+\mu)(t-s)}\|S^ q I^ p \|_{L^\frac{2}{ q}}ds\\
   &\leq C+C\int_0^t\left[1+(t-s)^{
   -\frac{2+n q}{4}}\right]e^{-(\gamma+\mu)(t-s)}\|S\|_{L^2}^ q \|I\|_{L^\infty}^ p  ds\\
   &\leq C+C\int_0^t\left[1+(t-s)^{
   -\frac{2+n q}{4}}\right]e^{-(\gamma+\mu)(t-s)} ds\\
   &\leq C+C\int_0^\infty\left(1+\tau^{
   -\frac{2+n q}{4}}\right)e^{-(\gamma+\mu)\tau}d\tau \leq C.
\end{split}
\ee
Next, we use  the variation-of-constants formula to the $S$-equation in \eqref{SIS-mass-C} to write  $S$ as
 \be\label{S-rewriting} \begin{split}
 S(t)&=e^{t(d_S\Delta-1)}S_0 +\chi\int_0^t e^{(t-s)(d_S\Delta-1)}\nabla\cdot(S\nabla I) ds\\
 &\ \  -\beta \int_0^t e^{(t-s)(d_S\Delta-1)}S^ q I^ p   ds+\int_0^t e^{(t-s)(d_S\Delta-1)}(\gamma  I+S)  ds.
 \end{split}
 \ee
 \textbf{Case of $n=1$:} Applying the  smoothing estimates of the Neumann heat semigroup on \eqref{S-rewriting}, using the obtained  $(L^2, W^{1,\infty})$-boundedness of $(S, I)$ in \eqref{S-l2-grad I-l4-est}, \eqref{I-linfty-un-bdd},  \eqref{gradI-est} and the fact that $ q<\frac{3}{2}$ by \eqref{kappa-lambda-con}, we  see, for all $t\in (0,T_m)$,
 \begin{align}
 \|S(\cdot,t)\|_{L^\infty}& \leq \left\|e^{t(d_S\Delta-1)}S_0 \right\|_{L^\infty} + \chi\int_0^t\left\|e^{(t-s)(d_S\Delta-1)}\nabla\cdot \left(S\nabla I\right)\right\|_{L^\infty}ds \nonumber \\
 &\ \quad  + \beta \int_0^t\left\|e^{(t-s)(d_S\Delta-1)}S^ q I^ p \right\|_{L^\infty} ds+ \int_0^t \left\|e^{(t-s)(d_S\Delta-1)}\left(\gamma  I+S\right)\right\|_{L^\infty} ds \nonumber \\
 &\leq C + C\int_0^t\left[1+(t-s)^{-\frac{1}{2}- \frac{1}{4}}\right]e^{-(t-s)}\left\|S\nabla I\right\|_{L^2}ds \nonumber \\
 &\ \quad +C\int_0^t\left[1+(t-s)^{-\frac{ q}{4}}\right]e^{-(t-s)} \left\|S^ q I^ p \right\|_{L^\frac{2}{ q}}ds \nonumber \\
 &\ \quad +C\int_0^t\left[1+(t-s)^{-\frac{1}{4}}\right]
 e^{-(t-s)}\left\|\gamma I+ S \right\|_{L^2}ds \nonumber \\
 &\leq C+C\int_0^t\left[1+(t-s)^{-\frac{3}{4}}\right]e^{-(t-s)}\|S\|_{L^2}\|\nabla I\|_{L^\infty}ds \nonumber \\
 &\ \quad +C\int_0^t\left[1+(t-s)^{-\frac{ q}{4}}\right]e^{-(t-s)}\|S\|_{L^2}^ q \|I\|_{L^\infty}^ p  ds \nonumber \\
 &\ \quad +C\int_0^t\left[1+(t-s)^{-\frac{1}{4}}\right]
 e^{-(t-s)}\left(\|I\|_{L^2}+\|S\|_{L^2}\right)ds \nonumber \\
 &\leq C+C\int_0^\infty\left(1+\tau^{-\frac{3}{4}}+\tau^{-\frac{ q}{4}} +\tau^{-\frac{1}{4}}\right)e^{-\tau}d\tau  \leq C.    \label{S-linfty-bdd-1d}
 \end{align}

 \textbf{Case of $n=2$:} Unlike \textbf{Case of $n=1$}, we need to further improve the $L^2$-regularity of $S$ to its $L^3$-regularity. To this purpose,  by the  $L^r$-$L^s$ type smoothing estimates of the Neumann heat semigroup, we use the established $(L^2, W^{1,\infty})$-boundedness of $(S, I)$ and the fact $ q<1$ implied by \eqref{kappa-lambda-con} to deduce first from \eqref{S-rewriting} that
 \begin{align}
 \|S(\cdot,t)\|_{L^3}& \leq \left\|e^{t(d_S\Delta-1)}S_0\right\|_{L^3} + \chi\int_0^t\left\|e^{(t-s)(d_S\Delta-1)}\nabla\cdot(S\nabla I)\right\|_{L^3}ds \nonumber \\
 &\ \quad  +\beta \int_0^t\left\|e^{(t-s)(d_S\Delta-1)}S^ q I^ p \right\|_{L^3} ds+ \int_0^t \left\|e^{(t-s)(d_S\Delta-1)}\left(\gamma  I+S\right)\right\|_{L^3} ds \nonumber \\
 &\leq C + C\int_0^t \left[1+(t-s)^{-\frac{1}{2}-\left(\frac{1}{2}-\frac{1}{3}\right)}\right]e^{-(t-s)}\left\|S\nabla I\right\|_{L^2}ds \nonumber \\
 &\ \quad +C\int_0^t\left[1+(t-s)^{-\left(\frac{ q}{2}-\frac{1}{3}\right)}\right]e^{-(t-s)}\left\|S^ q I^ p \right\|_{L^\frac{2}{ q}}ds \nonumber \\
 &\ \quad +C\int_0^t\left[1+(t-s)^{-(\frac{1}{2}-\frac{1}{3})}\right]
 e^{-(t-s)}\left\|\gamma I+S\right\|_{L^2}ds \nonumber \\
 &\leq C+C\int_0^t\left[1+(t-s)^{-\frac{2}{3}}\right]e^{-(t-s)}\|S\|_{L^2}\|\nabla I\|_{L^\infty}ds \nonumber \\
 &\ \ +C\int_0^t\left[1+(t-s)^{-\frac{3 q-2}{6}}\right]e^{-(t-s)}\|S\|_{L^2}^ q \|I\|_{L^\infty}^ p  ds \nonumber\\
 &\ \ +C\int_0^t\left[1+(t-s)^{-\frac{1}{6}}\right]
 e^{-(t-s)}\left(\|I\|_{L^2}+\|S\|_{L^2}\right)ds \nonumber \\
 &\leq C+C\int_0^\infty\left(1+\tau^{-\frac{2}{3}}+\tau^{-\frac{3 q-2}{6}}
 +\tau^{-\frac{1}{6}}\right)e^{-\tau}d\tau \nonumber\\
 &\leq C. \label{S-l3-bdd}
 \end{align}
 Then, using the just obtained  $(L^3, W^{1,\infty})$-boundedness of $(S, I)$  and the fact $ q<1$, similar  to \eqref{S-l3-bdd}, we finally conclude that
 \begin{align}
 \|S(\cdot,t)\|_{L^\infty}& \leq \left\|e^{t(d_S\Delta-1)}S_0\right\|_{L^\infty} + \chi\int_0^t\left\|e^{(t-s)(d_S\Delta-1)}\nabla\cdot(S\nabla I)\right\|_{L^\infty}ds \nonumber \\
 &\ \quad  +\beta \int_0^t\left\|e^{(t-s)(d_S\Delta-1)}S^ q I^ p \right\|_{L^\infty} ds+ \int_0^t \left\|e^{(t-s)(d_S\Delta-1)}\left(\gamma  I+S\right)\right\|_{L^\infty} ds \nonumber \\
 &\leq C+C\int_0^t\left[1+(t-s)^{-\frac{1}{2}- \frac{1}{3}}\right]e^{-(t-s)}\left\|S\nabla I\right\|_{L^3}ds \nonumber \\
 &\ \quad +C\int_0^t\left[1+(t-s)^{-\frac{ q}{3}}\right]e^{-(t-s)}\|S^ q I^ p \|_{L^\frac{3}{ q}}ds \nonumber \\
 &\ \quad +C\int_0^t\left[1+(t-s)^{-\frac{1}{3}}\right]
 e^{-(t-s)}\left\|\gamma I+S \right\|_{L^3}ds \nonumber \\
 &\leq C+C\int_0^t\left[1+(t-s)^{-\frac{5}{6}}\right]e^{-(t-s)}\|S\|_{L^3}\|\nabla I\|_{L^\infty}ds \nonumber\\
 &\ \quad  +C\int_0^t\left[1+(t-s)^{-\frac{ q}{3}}\right]e^{-(t-s)}\|S\|_{L^3}^ q \|I\|_{L^\infty}^ p  ds \nonumber \\
 &\ \quad +C\int_0^t\left[1+(t-s)^{-\frac{1}{3}}\right]
 e^{-(t-s)}\left(\|I\|_{L^3}+\|S\|_{L^3}\right)ds \nonumber \\
 &\leq C+C\int_0^\infty\left(1+\tau^{-\frac{5}{6}}+\tau^{-\frac{ q}{3}}
 +\tau^{-\frac{1}{3}}\right)e^{-\tau}d\tau \nonumber\\
 &\leq C. \label{S-linfty-bdd}
 \end{align}
Combining  \eqref{I-linfty-un-bdd},  \eqref{S-linfty-bdd-1d},  \eqref{S-linfty-bdd} and \eqref{S-bdd-imply gradIbdd0}, we achieve our desired uniform-in-time  estimate \eqref{S-I-bdd-fin1} on $[0, T_m)$. Consequently, the extensibility criterion  \eqref{extend} in the local existence of Lemma \ref{local-in-time} yields first  that $T_m=\infty$ and then the boundedness \eqref{S-I-bdd-fin1}  for all $t>0$.
\end{proof}

\section{Long time behavior of global bounded solutions}
 For any bounded global classical solution $(S, I)$ of \eqref{SIS-mass-C} obeying \eqref{S-I-bdd-fin1}, applying the H\"{o}lder estimates for parabolic  equations (cf. \cite[Theorem 1.3]{PV93-JDE})  and then using  the  standard parabolic Schauder theory (cf. \cite{Fried, La}) repeatedly (see similar argument in \cite{LPX18}), we see  there exist  $\theta\in(0, 1)$ and $C_0>0$ such that
 \be\label{reg-SI}
 \|S\|_{C^{2+\theta, 1+\frac{\theta}{2}}(\overline{\Omega}\times[t,t+1])}+\|I\|_{C^{2+\theta, 1+\frac{\theta}{2}}(\overline{\Omega}\times[t,t+1])}\leq C_0, \quad \forall \ t\geq 1.
 \ee

\subsection{ With cross-diffusion $\chi\neq 0$ and with motality $\mu>0$}
Our main result in this subsection reads as follows.

\begin{theorem}
\label{thm-lt1}In the case of $\mu>0$, the cross-diffusive SIS model \eqref{SIS-mass-C} does not admit threshold dynamics. More precisely, for any bounded global classical solution $(S, I)$ of \eqref{SIS-mass-C} obeying \eqref{S-I-bdd-fin1}, it follows, as $t\rightarrow \infty$, that
\be\label{S-con}
\left(S(\cdot,t),I(\cdot,t)\right)\rightarrow(S_*,0)
\ee
uniformly on $\overline{\Omega}$, where
\be\label{S*-def}
\begin{cases}S_*=0, &\text{ if }  0<p<1, \\[0.2cm]
0<S_*=|\Omega|^{-1}\left(\int_\Omega \left(S_0+I_0\right)-\mu\int_0^\infty\int_\Omega I\right), &\text{ if }  p\geq 1.
\end{cases}
\ee
Furthermore, $S_*\leq \left(\frac{\gamma + \mu }{\beta}\right)^{\frac{1}{q}}$ if $p=1$, and $(S, I)\rightarrow (S_*,0)$ exponentially if $p>1$.
\end{theorem}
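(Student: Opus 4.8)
The plan is to reduce the whole statement to the scalar quantity $\phi(t):=\int_\Omega I(\cdot,t)$, using only the mass identity \eqref{S+I-id}, the uniform regularity \eqref{reg-SI}, and a handful of elementary comparison arguments; no threshold or monotone structure is needed. First I would prove $\|I(\cdot,t)\|_{C^2(\overline\Omega)}\to0$. Since $t\mapsto\int_\Omega(S+I)(\cdot,t)$ is nonincreasing by \eqref{S+I-id} and nonnegative, it converges to some $L_\infty\ge0$, and $\mu\int_0^\infty\int_\Omega I=\int_\Omega(S_0+I_0)-L_\infty<\infty$; in particular $\mu\int_1^\infty\phi\,dt<\infty$. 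As $\phi'(t)=\int_\Omega(\beta S^qI^p-(\gamma+\mu)I)$ is bounded on $[1,\infty)$ by \eqref{S-I-bdd-fin1}, $\phi$ is uniformly continuous and integrable there, so a standard Barbalat-type argument gives $\phi(t)\to0$. A Gagliardo--Nirenberg interpolation (Lemma~\ref{GNI} with $l=0$, $k=1$, $m=\infty$, $s=\infty$, $r=1$) together with the uniform bound on $\|\nabla I(\cdot,t)\|_{L^\infty}$ from \eqref{reg-SI} then yields $\|I(\cdot,t)\|_{L^\infty}\le C\|I(\cdot,t)\|_{L^1}^{1/(n+1)}\to0$, and interpolating this against the uniform $C^{2+\theta}$ bound from \eqref{reg-SI} upgrades it to $\|I(\cdot,t)\|_{C^2(\overline\Omega)}\to0$; in particular $\|\Delta I(\cdot,t)\|_{L^\infty}\to0$.

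Next I would identify the limit of $S$. Integrating \eqref{S+I-id} gives $\int_\Omega(S+I)(\cdot,t)\to L_\infty$, hence $\int_\Omega S(\cdot,t)\to L_\infty$; writing $\bar S(t):=|\Omega|^{-1}\int_\Omega S(\cdot,t)$ and $S_*:=L_\infty/|\Omega|$, we have $\bar S(t)\to S_*$. To obtain the full uniform convergence I would test the $S$-equation in \eqref{SIS-mass-C} with $S-\bar S(t)$: the time-derivative of $\bar S$ drops out because $\int_\Omega(S-\bar S)=0$, integration by parts rewrites the cross-diffusion contribution as $\tfrac{\chi}{2}\int_\Omega S^2\Delta I=O(\|\Delta I(\cdot,t)\|_{L^\infty})=o(1)$, the reaction contribution is bounded by $\|S-\bar S\|_{L^2}\,\|\beta S^qI^p-\gamma I\|_{L^2}=o(1)\|S-\bar S\|_{L^2}$, and Poincar\'e's inequality applied to $S-\bar S$ produces a differential inequality of the form $\tfrac{d}{dt}\|S-\bar S\|_{L^2}^2\le-c\|S-\bar S\|_{L^2}^2+\eta(t)$ with $c>0$ and $\eta(t)\to0$. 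Hence $\|S(\cdot,t)-\bar S(t)\|_{L^2}\to0$, and a further Gagliardo--Nirenberg interpolation against the uniform gradient bound converts this into $\|S(\cdot,t)-S_*\|_{L^\infty}\to0$. This establishes \eqref{S-con} with $S_*=|\Omega|^{-1}\bigl(\int_\Omega(S_0+I_0)-\mu\int_0^\infty\int_\Omega I\bigr)$.

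It remains to pin down the sign of $S_*$, which I expect to be the main obstacle since it hinges precisely on the non-Lipschitz behaviour of $\beta S^qI^p$ at $I=0$. If $0<p<1$ and $S_*>0$, then $S(\cdot,t)\ge S_*/2$ for large $t$, and since $I\le\|I(\cdot,t)\|_{L^\infty}$ with $p-1<0$ one gets $\int_\Omega S^qI^p\ge(S_*/2)^q\|I(\cdot,t)\|_{L^\infty}^{p-1}\phi(t)$; because $\|I(\cdot,t)\|_{L^\infty}^{p-1}\to\infty$, the identity $\phi'=\beta\int_\Omega S^qI^p-(\gamma+\mu)\phi$ forces $\phi'\ge\phi$ for $t$ large, so $\phi$ grows exponentially, contradicting $\phi(t)\to0$ and $\phi>0$; hence $S_*=0$. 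If $p\ge1$ and $S_*=0$, then $\|S(\cdot,t)\|_{L^\infty}\to0$, so $\int_\Omega S^qI^p\le\|S(\cdot,t)\|_{L^\infty}^q\|I(\cdot,t)\|_{L^\infty}^{p-1}\phi(t)=o(1)\phi(t)$ and $\tfrac{d}{dt}\int_\Omega S=-\beta\int_\Omega S^qI^p+\gamma\phi\ge\tfrac\gamma2\phi\ge0$ for $t$ large; a nonnegative function that is eventually nondecreasing and tends to $0$ must vanish identically for large $t$, so $S\equiv0$ on some $[T,\infty)$ and then the $S$-equation gives $\gamma I\equiv0$ there, contradicting the strict positivity of $I$ in Lemma~\ref{local-in-time}; hence $S_*>0$. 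When $p=1$, if $\beta S_*^q>\gamma+\mu$ then $\beta S(\cdot,t)^q-(\gamma+\mu)\ge\delta>0$ for large $t$, again making $\phi$ grow exponentially, so $S_*\le\bigl((\gamma+\mu)/\beta\bigr)^{1/q}$.

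Finally, when $p>1$, one has $\beta S^qI^{p-1}\le C\|I(\cdot,t)\|_{L^\infty}^{p-1}\le\tfrac{\gamma+\mu}{2}$ for $t$ large, so $\phi'\le-\tfrac{\gamma+\mu}{2}\phi$; thus $\|I(\cdot,t)\|_{L^1}$, and by the interpolations of the first step $\|I(\cdot,t)\|_{L^\infty}$, decay exponentially. This makes $\bar S(t)-S_*=|\Omega|^{-1}\bigl(\mu\int_t^\infty\phi-\phi(t)\bigr)$ and the forcing $\eta(t)$ in the $S$-energy inequality exponentially small, so the Gronwall step now yields $\|S(\cdot,t)-S_*\|_{L^\infty}\le Ce^{-\sigma t}$ for some $\sigma>0$. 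Apart from the above dichotomy, the only genuine technical nuisance is controlling the chemotactic term $\chi\nabla\cdot(S\nabla I)$ in the energy estimate, which works solely because the first step delivers $\|\Delta I(\cdot,t)\|_{L^\infty}\to0$; everything else is routine.
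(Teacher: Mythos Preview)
Your proposal is correct and follows broadly the same architecture as the paper (decay of $I$ from the $L^1$ integrability, then uniform convergence of $S$, then a case analysis on $p$), but several of your ingredients differ from the paper's in interesting ways. For the uniform convergence $S\to S_*$ you test by $S-\bar S$ and control the chemotaxis term via $\tfrac{\chi}{2}\int_\Omega S^2\Delta I=O(\|\Delta I\|_{L^\infty})\to0$, obtained by interpolating $\|I\|_{L^\infty}\to0$ against the uniform $C^{2+\theta}$ bound; the paper instead tests by $S$, shows $\int_1^\infty\!\int_\Omega|\nabla I|^2<\infty$ from the $I$-equation, and concludes $\int_\Omega|\nabla S|^2\to0$ from time-integrability. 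The biggest divergence is the step $S_*>0$ for $p\ge1$: the paper proceeds pointwise, first proving exponential decay of $\|D^2I\|_{L^\infty}$ via higher regularity and Gagliardo--Nirenberg, then using $\Delta I\ge -Ce^{-ct}$ to build a lower barrier for $S$ from the parabolic equation $S_t\ge d_S\Delta S+\chi\nabla I\cdot\nabla S-C\chi e^{-ct}S$. Your argument is substantially simpler: since the divergence terms vanish upon integration, $\tfrac{d}{dt}\int_\Omega S=\gamma\phi-\beta\int_\Omega S^qI^p\ge\tfrac{\gamma}{2}\phi>0$ for large $t$ when $\|S\|_{L^\infty}\to0$, which directly contradicts $\int_\Omega S\to0$ with $\int_\Omega S(T)>0$; this bypasses the cross-diffusion entirely. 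Similarly, for $p<1$ and for the exponential rate when $p>1$ you work with the scalar ODE for $\phi$ rather than the paper's PDE comparison for $I$. Your approach is more elementary and avoids the auxiliary $D^3I$-regularity step; the paper's approach, in exchange, yields pointwise lower bounds on $S$ that could be of independent interest.
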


\begin{proof}
An  integration of \eqref{S+I-id} shows, for all $t>0$, that
\be\label{S+I-int-id}
 \int_\Omega \left(S+I\right)+\mu \int_0^t\int_\Omega I= \int_\Omega \left(S_0+I_0\right),
\ee
which along with the fact $\mu>0$ implies that
\be\label{glo-dyn7}
\int_0^\infty\int_\Omega I\leq \frac{1}{\mu}\int_\Omega \left(S_0+I_0\right)<\infty.
\ee
By the regularity \eqref{reg-SI}, the integrand $\int_\Omega I$ is uniformly bounded and uniformly continuous, and so $\|I(\cdot,t)\|_{L^1}\rightarrow 0$ as $t\rightarrow \infty$.  This, combined with \eqref{reg-SI} and the standard embedding theorem (cf. \eqref{SLinfty-gn-l2} below or \cite{Xiang16-ppt, Xiangpre2} for instance), yields that
\begin{equation}
I(\cdot,t)\to 0{\rm \ uniformly \ on \ }\overline\Omega, \quad {\rm as \ }t\to\infty.
\label{glo-dyn1}
\end{equation}
As a result, since $\int_\Omega (S+I)$ is decreasing and is nonnegative, we can assume
\begin{equation}
\bar{S}:=\frac{1}{|\Omega|}\int_{\Omega}S(\cdot,t) \to S_*,\quad {\mbox as \ }t\to\infty,
\label{glo-dyn2}
\end{equation}
for some nonnegative constant $S_*$.

Now, we claim that
\begin{align}
S(\cdot,t)\to S_*{\rm \ uniformly \ on \ }\overline\Omega, \quad {\rm as \ }t\to\infty.
\label{glo-dyn3}
\end{align}
To this aim, we notice from the $S$-equation in \eqref{SIS-mass-C} and \eqref{reg-SI} that
\begin{align}
\frac{1}{2}\frac{d}{dt}\int_\Omega S^2 & = -d_S \int_\Omega |\nabla S|^2 - \chi \int_\Omega S\nabla I\cdot \nabla S - \beta \int_\Omega S^{q+1} I^p + \gamma \int_\Omega SI \nonumber\\
&\leq - \frac{d_S}{2} \int_\Omega |\nabla S|^2  + \frac{\chi^2}{2d_S}\int_\Omega S^2 |\nabla I|^2  + \gamma \int_\Omega SI \nonumber\\
&\leq - \frac{d_S}{2} \int_\Omega |\nabla S|^2 + \frac{C_0^2 \chi^2}{2d_S}\int_\Omega |\nabla I|^2 + \gamma C_0 \int_\Omega I,\quad \forall \,t\geq1.
\label{glo-dyn4}
\end{align}
Similarly, one can readily  obtain from the $I$-equation that
\begin{align*}
\frac{1}{2}\frac{d}{dt}\int_\Omega I^2 & = -d_I \int_\Omega |\nabla I|^2 + \beta \int_\Omega S^q I^{p+1}- (\gamma+\mu) \int_{\Omega}I^2 \nonumber\\
&\leq  -d_I \int_\Omega |\nabla I|^2 + \beta C_0^{p+q} \int_\Omega  I,\quad \forall \, t\geq 1.
\end{align*}
Upon an integration and a use of \eqref{glo-dyn7}, we find that
\begin{align}
   \int_1^\infty \int_\Omega |\nabla I|^2 &\leq \frac{\beta C_0^{p+q}}{d_I} \int_1^\infty \int_\Omega I + \frac{1}{2d_I}\int_\Omega I^2(\cdot,1)   \nonumber\\
& \leq \frac{\beta C_0^{p+q}}{\mu d_I}\int_\Omega \left(S_0+I_0\right)+ \frac{1}{2d_I}C_0^2 |\Omega|<\infty.
\label{glo-dyn6}
\end{align}
 Integrating \eqref{glo-dyn4} and using \eqref{glo-dyn7} and \eqref{glo-dyn6}, we deduce, for some $C_1>0$ that
 $$
 \int_1^\infty \int_\Omega |\nabla S|^2 \leq C_1<\infty.
 $$
 This along with the uniform continuity of $ \int_\Omega |\nabla S|^2$ due to \eqref{reg-SI} shows
\begin{align}
\int_\Omega |\nabla S(\cdot,t)|^2 \to 0,\quad \mbox{as } t\to \infty.
\nonumber
\end{align}
Hence, we conclude from \eqref{glo-dyn2} and  the  Poincar\'{e}  inequality that
\be\label{S-bar-poincare}
\begin{split}
\int_\Omega \left| S(\cdot,t) - S_*\right|^2&=\left| \bar{S} - S_*\right|^2|\Omega|+\int_\Omega \left| S(\cdot,t) - \bar{S}\right|^2\\
 &\ \ \leq \left| \bar{S} - S_*\right|^2|\Omega|+C\int_\Omega \left| \nabla S(\cdot,t)\right|^2\to 0,\ \  \mbox{as } t\to \infty,
\end{split}
\ee
which along with \eqref{reg-SI} and the G-N inequality in \eqref{G-N-I} shows
\be\label{SLinfty-gn-l2} \begin{split}
\|S(\cdot,t)-S_*\|_{L^\infty}&\leq C\left(\|\nabla S(\cdot,t)\|_{L^\infty}^\frac{n}{n+2}+\| S(\cdot,t)-S_*\|_{L^2}^\frac{n}{n+2}\right)\| S(\cdot,t)-S_*\|_{L^2}^\frac{2}{n+2}\cr
&\ \leq C \| S(\cdot,t)-S_*\|_{L^2}^\frac{2}{n+2}\rightarrow 0\ \ \text{as} \ \ t\rightarrow \infty.
\end{split}
\ee
This precisely gives  rise to \eqref{glo-dyn3}.

(i) We now show that $S_*=0$ if $p<1$. Suppose on the contrary that $S_*>0$. Then it follows from \eqref{glo-dyn1} and \eqref{glo-dyn3} there exists some $T_0>0$ such that
$$\beta S^q(x,t) - (\gamma+\mu)I^{1-p}(x,t) > 0,\quad   (x,t)\in\Omega \times (T_0,\infty).$$
As a result, we have
\begin{align}
I_t - d_I \Delta I = I^p \left[ \beta S^q - (\gamma+\mu)I^{1-p}\right]>0,\quad (x,t)\in\Omega\times (T_0,\infty).
\nonumber
\end{align}
A simple comparison argument then yields $I(x,t) \geq \min_{x\in \overline\Omega}I(x,T_0)>0$ for all $(x,t)\in \overline\Omega \times [T_0,\infty)$, contradicting \eqref{glo-dyn1}. Thus, it must hold $S_* = 0$ if $p<1.$

(ii) In the sequel, we aim to prove that $S_*>0$, provided $p\geq 1$. Suppose on the contrary that $S_*=0$.
Then there exists some $T_1>1$ fulfilling
\begin{align}
\beta S^q(x,t) I^{p-1}(x,t) - (\gamma+\mu) \leq -\frac{1}{2}(\gamma+\mu),\quad   (x,t)\in \overline\Omega \times [T_1,\infty).
\label{glo-dyn8}
\end{align}
and
\begin{align}
\gamma - \beta S^q(x,t) I^{p-1}(x,t) >0,\quad   (x,t)\in \overline\Omega \times [T_1,\infty).
\label{glo-dyn8.5}
\end{align}
Thanks to \eqref{glo-dyn8}, one can easily see from the $I$-equation in \eqref{SIS-mass-C} that $I$ satisfies
\begin{equation}
\left\{ \begin{array}{ll}
\displaystyle I_t \leq d_I \Delta I -\frac{1}{2}(\gamma+\mu)I,&x\in\Omega,\, t>T_1,\\
\noalign {\vskip 4pt}
\displaystyle \frac{\partial I}{\partial \nu} = 0, &x\in\partial\Omega, \, t>T_1.
\end{array}\right.
\nonumber
\end{equation}
Consider the following corresponding ODE problem
\begin{equation}
\left\{ \begin{array}{ll}
\displaystyle \frac{d w}{dt} = -\frac{1}{2}(\gamma+\mu)w,& t>T_1,\\
\noalign {\vskip 4pt}
\displaystyle w(T_1) = \max_{x\in\overline\Omega}I(x,T_1).
\end{array}\right.
\nonumber
\end{equation}
An application of the comparison principle yields
\begin{align}
0<I(x,t) \leq w(t) = \max_{x\in\overline\Omega}I(x,T_1)e^{-\frac{1}{2}(\gamma+\mu)(t-T_1)}, \quad (x,t)\in\overline\Omega\times [T_1,\infty).
\nonumber
\end{align}
This yields with a convenient constant $C_2>0$ that
\begin{align}
0<I(x,t) \leq C_2 e^{-\frac{1}{2}(\gamma+\mu)t}, \quad (x,t)\in\overline\Omega\times [T_1,\infty).
\label{glo-dyn11}
\end{align}
Lemma \ref{local-in-time} and the regularity  \eqref{reg-SI} imply  that   $f(x,t)=\beta\Delta\left(S^qI^p\right)\in C^0(\overline{\Omega}\times[T_1, \infty))$. Then, writing  $z=\Delta I$, we infer from the $I$-equation in \eqref{SIS-mass-C} that
\begin{align*}
\begin{cases}
z_t=d_I\Delta z-(\gamma+\mu)z+f(x,t),    &(x,t)\in \Omega\times [T_1,\infty),\\ \noalign {\vskip 4pt}
\frac{\partial z}{\partial \nu}=0,  &(x,t)\in \partial\Omega\times [T_1,\infty),
\end{cases}
\end{align*}
which along with the standard Schauder estimate shows that $\|z(\cdot,t)\|_{C^2}$ and thus $\|D^3I(\cdot,t)\|_{L^\infty}$ is uniformly bounded due to the elliptic estimate.

In view of the exponential convergence of $I$ in  \eqref{glo-dyn11}, we  make use of the G-N inequality in Lemma \ref{GNI} to obtain, for some $C_4,\, C_5>0$, that
\be\begin{split}
\label{I-w2-decay}
\|D^2I(\cdot,t)\|_{L^\infty(\Omega)} &\leq C_3 \left(\|D^3I(\cdot,t)\|_{L^\infty(\Omega)}^{2/3} \|I(\cdot,t)\|_{L^\infty(\Omega)}^{1/3} + \|I(\cdot,t)\|_{L^\infty(\Omega)}  \right)   \\
& \leq C_4 e^{-C_5t},\quad t\geq T_1.
\end{split}
\ee
In particular, this indicates that
\begin{align}
\Delta I(x,t)\geq -C_4 e^{-C_5t}, \quad (x,t)\in \overline\Omega \times [T_1,\infty).
\label{glo-dyn12}
\end{align}
Now, observe from the $S$-equation in \eqref{SIS-mass-C} that
\begin{align}
S_t & = d_S \Delta S + \chi \nabla I \cdot \nabla S + \chi S \Delta I + I \left(\gamma - \beta S^q I^{p-1}\right) \nonumber\\
& \geq d_S \Delta S + \chi \nabla I \cdot \nabla S + \chi S \Delta I \nonumber\\
& \geq d_S \Delta S + \chi \nabla I \cdot \nabla S - C_4 \chi e^{-C_5 t}S,
\nonumber
\end{align}
for $x\in\Omega$ and $t>T_1$, where we have used \eqref{glo-dyn8.5} and \eqref{glo-dyn12}.
Consider the ODE problem
\begin{equation}
\left\{ \begin{array}{ll}
\displaystyle \frac{d v}{dt} = -C_4 \chi e^{-C_5 t}v,& t>T_1,\\
\noalign {\vskip 4pt}
\displaystyle v(T_1) = \min_{x\in\overline\Omega}S(x,T_1)>0.
\end{array}\right.
\nonumber
\end{equation}
We again employ the comparison principle to derive
\begin{align}
S(x,t) \geq v(t), \quad (x,t)\in \overline\Omega\times [T_1,\infty).
\nonumber
\end{align}
It can be easily verified, for $t>T_1$, that
\begin{align}
v(t) = v(T_1)\exp\left\{\frac{C_4 \chi}{C_5}e^{-C_5 T_1} \left[e^{-C_5 (t-T_1)} -1\right] \right\} \geq v(T_1)\exp\left\{-\frac{C_4 \chi}{C_5}e^{-C_5 T_1}\right\}>0.
\nonumber
\end{align}
As a result, it holds
\begin{align}
\liminf_{t\to \infty}\min_{x\in\overline\Omega}S(x,t) \geq v(T_1)\exp\left\{-\frac{C_4 \chi}{C_5}e^{-C_5 T_1}\right\}>0,
\nonumber
\end{align}
contradicting our assumption that $S_*=0$. Henceforth, we must have $S_*>0$.

(iii) We shall illustrate that $S_*^q\leq \frac{\gamma + \mu}{\beta}$ if $p=1$. We proceed on the contrary that $S_*^q>\frac{\gamma + \mu}{\beta}$ if $p=1$. Then there must exist some $T_2>1$ such that $(\beta S^q - \gamma - \mu)I>0$ on $\overline\Omega \times [T_2,\infty)$. Thus, we have
\begin{align}
I_t - d_I \Delta I>0,\quad x\in\Omega,\, t>T_2.
\nonumber
\end{align}
Again, it follows easily from  the comparison principle that $I(x,t)\geq \min_{x\in\overline\Omega}I(x,T_2)>0$ for all $(x,t) \in \overline \Omega \times [T_2,\infty)$. However, this is a contradiction to \eqref{glo-dyn1}.

Sending  $t\rightarrow \infty$ in \eqref{S+I-int-id} and applying \eqref{glo-dyn1} and \eqref{glo-dyn3}, we obtain the implicit formula for $S_*$ in \eqref{S*-def}.

(iv) We shall show that the convergence rate of $\left(S,I\right)\to (S_*,0)$ is exponential in the case of $p>1$. Since $I$ converges uniformly to zero, the above arguments indeed have shown (cf. \eqref{glo-dyn8}, \eqref{glo-dyn11} and \eqref{I-w2-decay}), for some $C_5,\, C_6>0$, that
\be\label{I-grad-decay}
\|I(\cdot, t)\|_{W^{1, \infty}}\leq C_5e^{-C_6t}, \ \ t\geq T_1.
\ee
This along with \eqref{glo-dyn2},  \eqref{S+I-int-id} and \eqref{S*-def} allows us to deduce that
\be\label{S*-Sbar}
\left|\overline{S}-S_*\right|=\frac{1}{|\Omega|}\left|\mu\int_t^\infty\int_\Omega I-\int_\Omega I\right|\leq C_7e^{-C_6t}, \ \ \ \ t\geq T_1.
\ee
Now, testing the $S$-equation in \eqref{SIS-mass-C}   by $(S-S_*)$ and using the boundedness of $(S,I)$, we find that there exists some $C_8>0$ fulfilling
\begin{align*}
&\frac{1}{2}\frac{d}{dt}\int_\Omega \left(S-S_*\right)^2\\
& = -d_S \int_\Omega |\nabla S|^2 - \chi \int_\Omega S\nabla I\cdot \nabla S + \int_\Omega \left(\gamma-\beta S^q I^{p-1} \right)\left(S-S_*\right)I\\
&\leq - \frac{d_S}{2} \int_\Omega |\nabla S|^2  + C_8 \|\nabla I(\cdot, t)\|_{L^\infty}^2 + C_8\|I(\cdot, t)\|_{L^\infty},\quad \forall \,t\geq T_1,
\end{align*}
which along with \eqref{S-bar-poincare}, \eqref{I-grad-decay} and \eqref{S*-Sbar} enables us to derive a differential inequality for $\|S-S_*\|_{L^2}^2$ as follows: for some $a,\, \delta,\,  C_9>0$ with $\delta<a$,
$$
\frac{d}{dt}\int_\Omega \left(S-S_*\right)^2+a\int_\Omega \left(S-S_*\right)^2\leq C_9e^{-\delta t}, \ \ t\geq T_1.
$$
This directly entails
$$
\int_\Omega \left(S(\cdot,t)-S_*\right)^2\leq e^{-a(t-T_1)}\int_\Omega \left(S(\cdot,T_1)-S_*\right)^2+\frac{C_9}{a-\delta}e^{-\delta t}, \ \ t\geq T_1,
$$
which, in conjunction with \eqref{SLinfty-gn-l2} and \eqref{I-grad-decay}, yields our desired exponential convergence of $(S, I)$ to $(S_*, 0)$.
The proof is thus completed.
\end{proof}

\subsection{Without cross-diffusion $\chi=0$  and without mortality $\mu=0$}

In this subsection, we shall always assume that $\chi =0$ and $\mu=0$. That is, we focus on the following reaction-diffusion SIS epidemic model
\be\label{SIS-mass}
\begin{cases}
S_t=d_S \Delta S -\beta S^ q I^ p +\gamma I,& x\in\Omega,\, t>0,\\[3pt]
I_t=d_I\Delta I+\beta  S^ q I^ p - \gamma  I,& x\in\Omega, \, t>0,\\[3pt]
\frac{\partial S}{\partial \nu}=\frac{\partial I}{\partial \nu}=0, & x\in \partial \Omega, \, t>0,\\[3pt]
S(x,0)=S_0(x),  \ \   I(x,0)= I_0(x), & x\in \Omega.
\end{cases}
\ee
From now on, we no longer assume that transmission rate $\beta$ and recovery rate $\gamma$ and positive constants.
Instead, we take spatial heterogeneity into consideration. That is, we assume $\beta(x)$ and $\gamma(x)$ are positive $C^\alpha(\overline\Omega)$-smooth functions for some $\alpha>0$.
According to \cite[Theorem 2.3]{PW19}, the model \eqref{SIS-mass} admits a unique global and bounded solution $(S,I)$ for any $p,\,q>0$ with $S(x,t),\,I(x,t)>0$ for all $x\in\overline\Omega$ and $t>0$. As a result, \eqref{reg-SI} still holds.
 In addition, it is easily seen that the total population number is conserved:
\begin{equation}
\int_\Omega (S+I) = \int_{\Omega}(S_0 +I_0) =:N>0,\quad \forall \, t>0.
\label{N}
\end{equation}

Concerning the long-time behavior of solutions, in the case of $p\in (0,1)$, the authors in \cite{PW19} only showed  the uniform persistence of solutions; if $p=1$, a basic reproduction number $\mathcal R_0$ can be defined and the uniform persistence property still holds if $\mathcal R_0>1$; if $p>1$, their analysis about the long-time behavior of the ODE version of \eqref{SIS-mass} suggests that its dynamics depend on the initial data. To the best of our knowledge, we are not aware of any further results on the global attractivity of disease-free equilibrium (DFE) or endemic equilibrium (EE) for the reaction-diffusion system \eqref{SIS-mass} presented in literature, although there are some studies \cite{Koro, Lei1} on the S(E)IR type ODE and PDE models with varying total population. Here, for $0<p\leq 1$, we shall investigate the global asymptotic stability of DFE or EE in two special cases: (i) $\gamma(x)=r \beta(x)$ for some constant $r>0$ and all $x\in \Omega$, and (ii) $d_S = d_I$.

We first consider the case of $0<p<1$.

Let $q,\,r>0$. For $S\in \left[0,\frac{N}{|\Omega|}\right]$, we define a function $$f(S) = r\left(\frac{N}{|\Omega|}-S\right)^{1-p} - S^q.$$
Then it can be readily checked that $f(0)>0$ and $f\left(\frac{N}{|\Omega|}\right)<0$. Moreover, $f'(S)<0$ for all $S\in \left(0,\frac{N}{|\Omega|}\right)$. Consequently, there exists a unique $S^*\in \left(0,\frac{N}{|\Omega|}\right)$ satisfying
\begin{equation}\label{S-p1}
 r\left(I^*\right)^{1-p}:=r\left(\frac{N}{|\Omega|}-S^*\right)^{1-p} = (S^*)^q.
\end{equation}
 Obviously, $(S^*,I^*)$ is an EE of \eqref{SIS-mass} if $\frac{\gamma(x)}{\beta(x)}\equiv r$ for all $x\in\Omega$.
\begin{theorem}
\label{thm-glo1}
Suppose $0<p<1$. Assume there exists some $r>0$ such that $\gamma(x)=r \beta(x)$ for all $x\in \Omega$. Then, as $t\to \infty$, the unique global and bounded solution $(S,I)$ of the reaction-diffusion model  \eqref{SIS-mass} satisfies
\be
\left(S(x,t),I(x,t)\right) \to (S^*,I^*),
\nonumber
\ee
uniformly for $x\in\overline\Omega$, where $(S^*, I^*)$ is defined by \eqref{S-p1}.
\end{theorem}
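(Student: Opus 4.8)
The plan is to produce an explicit Lyapunov functional that hinges on the proportionality $\gamma(x)=r\beta(x)$. Define
\[
V(t)=\int_\Omega\left[\frac{1}{q+1}\,S^{q+1}+\frac{r}{2-p}\,I^{2-p}\right].
\]
First I would differentiate $V$ along the unique bounded global solution of \eqref{SIS-mass}: testing the $S$-equation by $S^q$, the $I$-equation by $rI^{1-p}$, integrating by parts in the diffusion terms (valid on each compact time interval by the strict positivity of $S,I$), and using $\gamma=r\beta$ to merge the four reaction integrals, one checks that the cross terms assemble into a perfect square, so that
\[
\frac{dV}{dt}=-d_S q\int_\Omega S^{q-1}|\nabla S|^2-d_I r(1-p)\int_\Omega I^{-p}|\nabla I|^2-\int_\Omega\beta\,I^p\bigl(S^q-rI^{1-p}\bigr)^2\le 0,
\]
since $q>0$ and $0<p<1$. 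Thus $V$ is nonincreasing and, being nonnegative, convergent; in particular the three dissipation integrals are integrable in $t$ on $(1,\infty)$.

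Next I would invoke the a priori regularity \eqref{reg-SI}, which makes the orbit $\{(S(\cdot,t),I(\cdot,t)):t\ge 1\}$ relatively compact in $C^2(\overline\Omega)\times C^2(\overline\Omega)$, and run a LaSalle invariance principle. For any $t_n\to\infty$ with $(S(\cdot,t_n),I(\cdot,t_n))\to(\widehat S,\widehat I)$ in $C^2\times C^2$, continuity of $V$ along the flow forces $dV/dt\equiv 0$ on the invariant orbit through $(\widehat S,\widehat I)$, so the dissipation identity gives $\nabla\widehat S\equiv\nabla\widehat I\equiv 0$ (hence $(\widehat S,\widehat I)$ is a constant pair) and $\widehat I^{\,p}\bigl(\widehat S^{\,q}-r\widehat I^{\,1-p}\bigr)^2=0$. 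The conservation law \eqref{N} forces $\widehat S+\widehat I=N/|\Omega|$, so either $(\widehat S,\widehat I)=(N/|\Omega|,0)$, or $\widehat S^{\,q}=r\widehat I^{\,1-p}$ together with $\widehat S+\widehat I=N/|\Omega|$, which by the uniqueness in the definition \eqref{S-p1} yields $(\widehat S,\widehat I)=(S^*,I^*)$. Since the $\omega$-limit set is connected and these are two isolated points, it is a single one of them.

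It then remains to exclude the disease-free state $(N/|\Omega|,0)$, for which I would repeat the argument of part (i) in the proof of Theorem \ref{thm-lt1}: if $(S,I)\to(N/|\Omega|,0)$ uniformly then, because $0<p<1$, one has $\beta S^q-\gamma I^{1-p}>0$ on $\overline\Omega\times(T_0,\infty)$ for some $T_0$, whence $I_t-d_I\Delta I=I^p(\beta S^q-\gamma I^{1-p})>0$ and the comparison principle keeps $\min_{\overline\Omega}I(\cdot,t)\ge\min_{\overline\Omega}I(\cdot,T_0)>0$, contradicting $I\to 0$. Therefore the $\omega$-limit set is $\{(S^*,I^*)\}$, and by the precompactness this gives $(S(\cdot,t),I(\cdot,t))\to(S^*,I^*)$ uniformly on $\overline\Omega$.

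The crux — and the only place the hypothesis $\gamma=r\beta$ is genuinely used — is the algebraic fact that the reaction contributions collapse into $\beta I^p(S^q-rI^{1-p})^2$; without it there seems to be no comparably clean Lyapunov structure, which is presumably why the theorem is stated under it. The remaining care is purely technical: the differentiation and integration by parts in $dV/dt$ when $0<q<1$, where $S^{q-1}$ is singular at $S=0$. This is legitimate because $S$ is continuous and strictly positive, hence bounded below on every compact time interval, so the dissipation identity holds on each $(t_0,t_1)\subset(0,\infty)$ and then patches together to the whole half-line.
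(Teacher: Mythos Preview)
Your proof is correct and takes a genuinely different route from the paper's. The paper builds a relative-entropy Lyapunov functional centered at $(S^*,I^*)$,
\[
V_1(S,I)=\int_\Omega\Bigl[(S-S^*)-\tfrac{(S^*)^q}{1-q}\bigl(S^{1-q}-(S^*)^{1-q}\bigr)\Bigr]
+\int_\Omega\Bigl[(I-I^*)-\tfrac{(I^*)^{1-p}}{p}\bigl(I^p-(I^*)^p\bigr)\Bigr],
\]
obtained by testing against $1-(S^*/S)^q$ and $1-(I^*/I)^{1-p}$; this forces a case split $q\neq1$ versus $q=1$, and the resulting dissipation carries singular weights $S^{-(q+1)}$, $I^{-(2-p)}$, so the paper invokes the uniform-persistence theorem of \cite[Theorem~2.5]{PW19} twice --- once to make the dissipation uniformly continuous in $t$, and again to force $I_\infty>0$. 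Your power functional $\int_\Omega\bigl(\tfrac{1}{q+1}S^{q+1}+\tfrac{r}{2-p}I^{2-p}\bigr)$ is simpler, needs no case split in $q$, and its reaction dissipation collapses directly to the perfect square $\beta I^p(S^q-rI^{1-p})^2$. By running LaSalle and then eliminating the DFE with the comparison argument of Theorem~\ref{thm-lt1}(i), you avoid any external appeal to uniform persistence, making the proof self-contained. The price is that your LaSalle step, while standard, requires slightly more care (well-posedness of the flow through $\omega$-limit points, positivity along those orbits) than the paper's direct ``integrable dissipation plus uniform continuity implies dissipation $\to0$'' route.
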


\begin{proof}
Suppose $q\neq 1$.
We define the following Lyapunov functional
$$V_1(S,I) = \int_\Omega\left\{ \left[\left(S-S^*\right) -\frac{(S^*)^q}{1-q} \left( S^{1-q}-(S^*)^{1-q} \right) \right] + \left[(I- I^*) - \frac{(I^*)^{1-p}}{p} \left(I^p - (I^*)^p\right)  \right]  \right\}.$$
Elementary analysis shows that $V_1(S,I) \geq 0$ for all $S,\,I>0$. Moreover, in light of $(S^*)^q = r (I^*)^{1-p}$ due to \eqref{S-p1}, by direct calculations, we compute from \eqref{SIS-mass} that
\begin{align}
\frac{d}{dt}V_1(S,I)&=\int_\Omega \left[\left(1-\frac{(S^*)^q}{S^q}  \right)S_t + \left(1-\frac{(I^*)^{1-p}}{I^{1-p}}\right)I_t \right] \nonumber\\
& = -q d_S (S^*)^q \int_\Omega \frac{|\nabla S|^2}{S^{q+1}} - d_I (1-p)(I^*)^{1-p} \int_\Omega \frac{|\nabla I|^2}{I^{2-p}} - (I^*)^{1-p} \int_\Omega \beta \frac{(S^q I^p - rI)^2}{S^q I} \nonumber\\
& \leq 0. \nonumber
\end{align}
Thus, upon an integration in $t\in (1,\infty)$ and a use of the boundedness of $(S,I)$, we find
\be
\int_1^\infty\int_\Omega \left[qr d_S \frac{|\nabla S|^2}{S^{q+1}} + d_I (1-p) \frac{|\nabla I|^2}{I^{2-p}} +  \beta \frac{(S^q I^p - rI)^2}{S^q I} \right]<\infty.
\label{thm-glo1-1}
\ee
The regularity in \eqref{reg-SI} along with Ascoli-Arzel\`{a} theorem implies there exists a sequence $t_k\rightarrow \infty$ and nonnegative $C^2$-function $(S_\infty, I_\infty)$ such that $(S(\cdot, t_k), I(\cdot, t_k))\rightarrow (S_\infty,I_\infty)$ in $C^2(\overline{\Omega})$, as $k\rightarrow \infty$. One the other hand, recall that $(S,I)$ is uniformly persistent by \cite[Theorem 2.5]{PW19}, then the function
$$t\mapsto \int_\Omega \left[qr d_S \frac{|\nabla S|^2}{S^{q+1}} + d_I (1-p) \frac{|\nabla I|^2}{I^{2-p}} +  \beta \frac{(S^q I^p - rI)^2}{S^q I} \right]$$
 is  uniformly bounded and uniformly continuous. Thus, we infer from \eqref{thm-glo1-1} that
\be
\int_\Omega \left[qr d_S \frac{|\nabla S_\infty|^2}{S_\infty^{q+1}} + d_I (1-p) \frac{|\nabla I_\infty|^2}{I_\infty^{2-p}} +  \beta \frac{(S_\infty^q I_\infty^p - rI_\infty)^2}{S_\infty^q I_\infty} \right] =0,
\nonumber
\ee
which entails both $S_\infty$ and $I_\infty$ are constants and $S^q_\infty I^p_\infty - r I_\infty =0$. In addition, $S_\infty$ and $I_\infty$ must be positive constants again by the uniform persistence of $(S,I)$. As a result, it must hold $(S_\infty,I_\infty) = (S^*,I^*)$ by uniqueness. Since the limiting function $(S_\infty,I_\infty)$ is unique, it follows that $(S(\cdot,t),I(\cdot,t))\to (S^*,I^*)$ in $C^2(\overline\Omega)$, as $t\to\infty$.

When  $q=1$, we  define the following Lyapunov functional
$$V_2(S,I) = \int_\Omega\left\{ \left(S-S^* - S^* \ln\frac{S}{S^*} \right)   + \left[(I- I^*) - \frac{(I^*)^{1-p}}{p} \left(I^p - (I^*)^p\right)  \right]  \right\}.$$
Then $V_2(S,I) \geq 0$ for all $S,\,I>0$. Furthermore, we have
\begin{align*}
\frac{d}{dt}V_2(S,I)&=\int_\Omega \left[\left(1-\frac{S^*}{S}  \right)S_t + \left(1-\frac{(I^*)^{1-p}}{I^{1-p}}\right)I_t \right] \nonumber\\
& = -d_S S^* \int_\Omega \frac{|\nabla S|^2}{S^{2}} - d_I (1-p)(I^*)^{1-p} \int_\Omega \frac{|\nabla I|^2}{I^{2-p}} - (I^*)^{1-p} \int_\Omega \beta \frac{(S I^p - rI)^2}{S^q I} \nonumber\\
& \leq 0.
\end{align*}
Then the  rest of the argument is the same as that  of $q\neq 1$ as done before.
\end{proof}

Now, suppose that $d_S = d_I$.  For our later purpose,  given  constant  $\tau_0>0$, we consider the following initial-boundary value problem:
\begin{equation}
\left\{ \begin{array}{lll}
u_t = d_I \Delta u + \beta(x) \left(\tau_0 - u\right)^q u^p - \gamma(x)u,&x\in\Omega,\, t>0,\\ \noalign{\vskip 3pt}
\frac{\partial u}{\partial \nu} =0, &x\in\partial \Omega,\,t>0,\\ \noalign{\vskip 3pt}
u(x,0)=u_0(x),&x\in\Omega.
\end{array}\right.
\label{thm-glo2-1}
\end{equation}

\begin{lemma}
\label{glo-lem1}
Let  $0<p<1$. Inside the interval $\left[0,\tau_0\right]$, the problem \eqref{thm-glo2-1} admits a unique positive steady state $U(x)$, which is globally asymptotically stable for \eqref{thm-glo2-1} with initial data fulfilling $\inf_{\Omega}u_0>0$ and $\|u_0\|_{L^\infty} < \tau_0$.
\end{lemma}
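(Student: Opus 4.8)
The plan is to view \eqref{thm-glo2-1} as a sublinear (logistic-type) scalar reaction–diffusion equation whose nonlinearity
$$g(x,u):=\beta(x)(\tau_0-u)^qu^p-\gamma(x)u=u^p\big[\beta(x)(\tau_0-u)^q-\gamma(x)u^{1-p}\big]$$
enjoys, for $0<p<1$ and $q>0$, the decisive structural property that $u\mapsto g(x,u)/u=\beta(x)(\tau_0-u)^qu^{p-1}-\gamma(x)$ is \emph{strictly decreasing} on $(0,\tau_0)$ for each fixed $x$, with $g(x,u)/u\to+\infty$ as $u\to0^+$ and $g(x,\tau_0)=-\gamma(x)\tau_0<0$. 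First I would record two ordered sub-/super-solutions of the \emph{stationary} problem: the constant $\tau_0$ is a supersolution since $g(x,\tau_0)<0$, and because $\beta(x)(\tau_0-\delta)^q-\gamma(x)\delta^{1-p}\to\min_{\overline\Omega}\beta\cdot\tau_0^q>0$ uniformly in $x$ as $\delta\to0^+$, there is $\delta_*\in(0,\tau_0)$ with $g(x,\delta)>0$ for all $x\in\overline\Omega$ and all $\delta\in(0,\delta_*]$, so every such constant $\delta$ is a subsolution. The standard monotone-iteration/sub--supersolution method (applied, if one wishes, after harmlessly redefining $g(x,u):=-\gamma(x)u$ for $u\ge\tau_0$ to make the nonlinearity globally H\"older) then produces a stationary solution $U$ with $\delta_*\le U\le\tau_0$; evaluating the equation at an interior maximum and using the strong maximum principle forces $U<\tau_0$ throughout $\overline\Omega$, so $U$ lies in a compact subinterval of $(0,\tau_0)$ on which $g$ is smooth, whence $U\in C^{2+\alpha}(\overline\Omega)$ and $U>0$.

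Next I would prove the \textbf{ordering/uniqueness} statement, which is the conceptual core: two positive steady states that are ordered must coincide. If $U_1\le U_2$ are positive steady states, multiplying the $U_1$-equation by $U_2$ and the $U_2$-equation by $U_1$, subtracting, integrating over $\Omega$, and using Green's identity with the homogeneous Neumann conditions yields
\[
\int_\Omega U_1U_2\Big[\frac{g(x,U_1)}{U_1}-\frac{g(x,U_2)}{U_2}\Big]=0 .
\]
Since $U_1\le U_2$ and $u\mapsto g(x,u)/u$ is strictly decreasing, the integrand is pointwise $\ge0$, hence $\equiv0$, and strict monotonicity forces $U_1\equiv U_2$. (Alternatively one may invoke the Brezis--Oswald/D\'iaz--Sa\'a uniqueness principle for sublinear elliptic equations, which under Neumann conditions gives uniqueness of the positive steady state outright.)

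For \textbf{global asymptotic stability}, let $u$ solve \eqref{thm-glo2-1} with $\inf_\Omega u_0>0$ and $\|u_0\|_{L^\infty}<\tau_0$, and set $\delta_0:=\min\{\inf_\Omega u_0,\delta_*\}\in(0,\tau_0)$, so that $\delta_0\le u_0\le\tau_0$ with $\delta_0$ a (spatially constant) subsolution and $\tau_0$ a supersolution of the stationary problem. Let $\underline u$ and $\overline u$ be the solutions of \eqref{thm-glo2-1} emanating from the constants $\delta_0$ and $\tau_0$. Because their initial data are respectively a sub- and a super-solution of the stationary problem, $\underline u(\cdot,t)$ is nondecreasing in $t$ and $\overline u(\cdot,t)$ is nonincreasing in $t$; the comparison principle gives $\delta_0\le\underline u\le u\le\overline u\le\tau_0$, and a further comparison argument applied to $\tau_0-\overline u$ (whose equation carries the strictly positive source $\gamma(x)\tau_0+O(\tau_0-\overline u)$) shows $\overline u<\tau_0$ for $t>0$, so all four functions remain in a fixed compact subset of $(0,\tau_0)$ for $t\ge1$. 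The parabolic Schauder estimates then furnish time-uniform $C^{2+\alpha,1+\alpha/2}$ bounds on $\overline\Omega\times[t,t+1]$, so $\underline u(\cdot,t)\to\underline U$ and $\overline u(\cdot,t)\to\overline U$ in $C^2(\overline\Omega)$ with $\delta_0\le\underline U\le\overline U<\tau_0$; passing to the limit in the time-integrated equation shows $\underline U,\overline U$ are positive steady states. By the ordering statement, $\underline U\equiv\overline U=:U$, and in particular $U$ is the unique positive steady state (any other is squeezed between the same two monotone orbits and then equals $U$ again by the ordering statement). The squeeze $\underline u(\cdot,t)\le u(\cdot,t)\le\overline u(\cdot,t)$ finally forces $u(\cdot,t)\to U$ uniformly on $\overline\Omega$; Lyapunov stability of $U$ is obtained along the same lines from the order-preserving structure of the flow, giving global asymptotic stability.

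The \textbf{main obstacle} is the uniqueness/ordering step and, hand in hand with it, confining every relevant solution strictly inside $(0,\tau_0)$: the factor $(\tau_0-u)^q$ is merely H\"older near $u=\tau_0$ and $u^p$ merely H\"older near $u=0$, so one must first trap $U$, $\underline u$, $\overline u$ and $u$ in a compact subinterval of $(0,\tau_0)$ — via the strong maximum principle / positive-source comparison at the upper end and the sign $g(x,\cdot)>0$ near $0$ at the lower end — before the sublinear-structure computation of the previous paragraphs and the Schauder theory can be applied cleanly.
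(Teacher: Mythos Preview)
Your proposal is correct and follows essentially the same approach as the paper: constant sub- and supersolutions $\delta$ and $\tau_0$ give existence; the cross-multiplication identity $\int_\Omega U_1U_2\bigl[g(x,U_1)/U_1-g(x,U_2)/U_2\bigr]=0$ together with strict monotonicity of $u\mapsto g(x,u)/u$ forces ordered steady states to coincide (the paper writes this as $\int_\Omega\beta u_1^pu_2^p\bigl[(\tau_0-u_1)^qu_2^{1-p}-(\tau_0-u_2)^qu_1^{1-p}\bigr]=0$, which is algebraically the same); and global stability comes from squeezing $u$ between the monotone orbits emanating from $\delta$ and $\tau_0$, whose limits are ordered steady states and hence coincide. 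You are somewhat more explicit than the paper about trapping solutions strictly inside $(0,\tau_0)$ to avoid the H\"older singularities of $u^p$ and $(\tau_0-u)^q$, which is a welcome clarification but not a different method.
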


\begin{proof}
Since $0<p<1$,  define
$$
\delta_0=\sup\left\{\tilde\delta\in (0, \tau_0): \ \left(\tau_0-\delta\right)^q\inf_{\Omega}\frac{\beta}{\gamma}-\delta^{1-p}>0, \ \forall \delta\in(0,\tilde\delta)\right\}.
$$
Then one can easily check for any  $\delta\in (0, \delta_0]$,  that the two positive constants $\delta$ and $\tau_0$, respectively, are lower and upper solutions for the steady state problem of \eqref{thm-glo2-1}. Thus, the existence of a positive equilibrium $U(x) \in \left[0,\tau_0 \right]$ is guaranteed  by the standard  upper-lower solutions method.

Next, we first show there exists a unique steady state in $\left[\delta,\tau_0 \right]$. To this end, we let $u_1$ and $u_2$ be the minimal and maximal solutions in $\left[\delta,\tau_0 \right]$.  It then follows that $\delta \leq u_1\leq u_2\leq \tau_0$ on $\overline{\Omega}$. Then we multiply by $u_2$ the equation satisfied by $u_1$ to derive
\begin{equation}
-d_I \int_\Omega \nabla u_1 \cdot \nabla u_2 + \int_\Omega \left[ \beta \left(\tau_0 -u_1\right)^q u_1^p u_2 - \gamma u_1 u_2 \right] =0.
\label{thm-glo2-2}
\end{equation}
By interchanging the role of $u_1$ and $u_2$, we have
\begin{equation}
-d_I \int_\Omega \nabla u_1 \cdot \nabla u_2 + \int_\Omega \left[ \beta \left(\tau_0 -u_2\right)^q u_2^p u_1 - \gamma u_1 u_2 \right] =0.
\label{thm-glo2-3}
\end{equation}
Thus, we see from \eqref{thm-glo2-2} and \eqref{thm-glo2-3} that
\begin{equation}
\int_\Omega \beta u_1^p u_2^p \left[\left(\tau_0 - u_1\right)^q u_2^{1-p} - \left(\tau_0 - u_2\right)^q u_1^{1-p}  \right]=0.
\nonumber
\end{equation}
This along with the facts  $\left(\tau_0 - u_1\right)^q \geq \left(\tau_0 - u_2\right)^q >0 $ and $u_2^{1-p} \geq u_1^{1-p}$ shows that $u_1 \equiv u_2$ on $\overline{\Omega}$, yielding the desired uniqueness on $[\delta, \tau_0]$.  Now, for any two positive equilibria  $u_1$ and $u_2$ on $[0, \tau_0]$,  we  put $\delta=\min\{\min_{\overline{\Omega}}u_1, \ \min_{\overline{\Omega}}u_2, \ \delta_0\}$; then $\delta>0$ and $u_1, u_2$ are two equilibria on $[\delta, \tau_0]$, and so the proved uniqueness shows $u_1\equiv u_2$ on $\overline{\Omega}$.

Given initial data fulfilling $\inf_{\Omega}u_0>0$ and $\|u_0\|_{L^\infty} < \tau_0$, for any $0<\delta\leq \min\{\inf_{\Omega}u_0, \ \delta_0\}$, we see that  $ \delta \leq u_0(x) \leq \tau_0$ for all $x\in\overline\Omega$. Denoting  by $u(x,t;\delta)$ and $u(x,t;\tau_0)$, respectively, the solutions of \eqref{thm-glo2-1} with initial data $u(x,0)\equiv \delta$ and $u(x,0) \equiv \tau_0$, we then conclude from the maximum principle  that
\begin{equation}
\delta \leq u(x,t;\delta) \leq u(x,t) \leq u\left(x,t;\tau_0 \right) \leq \tau_0,
\label{thm-glo2-4}
\end{equation}
for all $(x,t) \in \overline \Omega \times (0,\infty)$. In addition, $u(x,t;\delta)$ is non-decreasing and $u(x,t;\tau_0 )$ is non-increasing with respect to $t\in (0,\infty)$. As a result, the point-wise limits
\begin{equation}
\delta\leq \lim_{t\to\infty}u(x,t;\delta) =: \underline U(x) \quad \mbox{and}\quad \lim_{t\to\infty}u\left(x,t;\tau_0\right) =: \overline U(x)\leq \tau_0,
\label{thm-glo2-4.5}
\end{equation}
exist. Furthermore, notice that both $\underline U$ and $\overline U$ are positive equilibria of \eqref{thm-glo2-1} in $(0,\tau_0)$, and thus, $\underline U\equiv \overline U$.  Hence, in light of the standard parabolic regularity as in \eqref{reg-SI} or simply  Dini's theorem,    we infer from \eqref{thm-glo2-4} and \eqref{thm-glo2-4.5} that  $u(x,t;u_0)\rightarrow U(x) \equiv \underline U(x)$   as $t\rightarrow \infty$ uniformly for $x\in \overline{\Omega}$.
\end{proof}

Now, we are ready to show the global asymptotic stability of EE when $d_S = d_I$.

\begin{theorem}
\label{thm-glo2}
 Let $0<p<1$ and $d_S =d_I$. Then, as $t\to\infty$, the unique global and bounded solution $(S,I)$ of the reaction-diffusion model \eqref{SIS-mass} fulfills
$$(S(x,t),I(x,t)) \to (\tilde S(x),\tilde I(x)),$$
uniformly for $x\in\overline\Omega$, where $(\tilde S(x),\tilde I(x))$ is the unique endemic equilibrium of \eqref{SIS-mass}.
\end{theorem}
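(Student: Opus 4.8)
The plan is to use the equal-diffusion hypothesis $d_S=d_I$ to decouple the system asymptotically, thereby reducing the convergence question to the scalar problem \eqref{thm-glo2-1} already settled in Lemma \ref{glo-lem1}. First I would set $w:=S+I$; adding the two equations of \eqref{SIS-mass} the reaction terms cancel and $w$ solves the pure heat equation $w_t=d_I\Delta w$ in $\Omega$ with $\partial w/\partial\nu=0$, while \eqref{N} gives $\int_\Omega w\equiv N$. Hence $w(\cdot,t)\to\tau_0:=N/|\Omega|$ uniformly on $\overline\Omega$ (indeed exponentially, with rate $\lambda_1(d_I)$) by the spectral decomposition of the Neumann heat semigroup. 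Substituting $S=w-I\ (>0)$, the $I$-component then solves
\begin{equation*}
I_t=d_I\Delta I+\beta(x)\bigl(w(x,t)-I\bigr)^q I^p-\gamma(x)I,\qquad \frac{\partial I}{\partial\nu}=0,
\end{equation*}
which is an asymptotically autonomous perturbation of \eqref{thm-glo2-1} with parameter $\tau_0$, the perturbation $\beta[(w-I)^q-(\tau_0-I)^q]$ tending to $0$ uniformly as $t\to\infty$ since $w\to\tau_0$.

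Second, I would promote this asymptotic autonomy to genuine convergence by trapping $I$ between solutions of the limiting problem with slightly perturbed parameters. By the uniform persistence of $(S,I)$ from \cite[Theorem 2.5]{PW19} together with the uniform regularity \eqref{reg-SI}, there are constants $c_S,c_I>0$ and a time $T_\ast$ with $S\ge c_S$ and $I\ge c_I$ on $\overline\Omega\times[T_\ast,\infty)$; combined with $w\to\tau_0$ this gives, for every sufficiently small $\epsilon>0$ (say $0<\epsilon<c_S/2$), a time $T_\epsilon\ge T_\ast$ such that $\tau_0-\epsilon\le w\le\tau_0+\epsilon$ and $c_I\le I\le\tau_0-\epsilon$ on $\overline\Omega\times[T_\epsilon,\infty)$. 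Since $t\mapsto t^q$ is increasing and $w-I\ge0$, on that range $I$ is a subsolution of \eqref{thm-glo2-1} with $\tau_0$ replaced by $\tau_0+\epsilon$ and a supersolution of the analogous problem with $\tau_0-\epsilon$. Comparing $I$ with the solutions of these two auxiliary problems (emanating from the constant sub/supersolutions $\tau_0+\epsilon$ and $\min\{c_I,\delta_0\}$ respectively, exactly as in the proof of Lemma \ref{glo-lem1}) and invoking the global asymptotic stability asserted there, I obtain
\begin{equation*}
U_{\tau_0-\epsilon}(x)\le\liminf_{t\to\infty}I(x,t)\le\limsup_{t\to\infty}I(x,t)\le U_{\tau_0+\epsilon}(x)
\end{equation*}
uniformly on $\overline\Omega$, where $U_\tau$ denotes the unique positive steady state in $[0,\tau]$ from Lemma \ref{glo-lem1}. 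A one-line sub/supersolution comparison of the steady-state equations shows $U_\tau$ is nondecreasing in $\tau$, so $U_{\tau_0-\epsilon}\le U_{\tau_0}\le U_{\tau_0+\epsilon}$; letting $\epsilon\downarrow0$, the monotone limits of $U_{\tau_0\pm\epsilon}$ are positive steady states of the $\tau_0$-problem and hence coincide with $U_{\tau_0}$ by the uniqueness part of Lemma \ref{glo-lem1}. Therefore $I(\cdot,t)\to U_{\tau_0}$ and, via $S=w-I$, $S(\cdot,t)\to\tau_0-U_{\tau_0}$, both uniformly on $\overline\Omega$.

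Finally I would set $\tilde I:=U_{\tau_0}$, $\tilde S:=\tau_0-U_{\tau_0}$. Since $0<U_{\tau_0}<\tau_0$ on $\overline\Omega$ (the strict upper bound by the strong maximum principle) and $U_{\tau_0}$ solves $d_I\Delta U_{\tau_0}+\beta(\tau_0-U_{\tau_0})^qU_{\tau_0}^p-\gamma U_{\tau_0}=0$, a direct check shows $(\tilde S,\tilde I)$ is a positive steady state of \eqref{SIS-mass} with $\tilde S+\tilde I\equiv\tau_0=N/|\Omega|$; conversely, adding the two steady-state equations and using $d_S=d_I$ forces any positive equilibrium to have constant total, so within the mass class $S^\sharp+I^\sharp\equiv N/|\Omega|$ uniqueness follows from Lemma \ref{glo-lem1}, identifying $(\tilde S,\tilde I)$ as the endemic equilibrium of \eqref{SIS-mass}. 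The step I expect to be the main obstacle is exactly the promotion of asymptotic autonomy to convergence in the second paragraph: one must arrange the comparison with the $(\tau_0\pm\epsilon)$-problems so that $I$ and its comparison functions genuinely remain in the admissible range $(0,\tau_0\pm\epsilon)$ of Lemma \ref{glo-lem1} — which is where uniform persistence of \emph{both} components is indispensable — and then carefully take the iterated limit (first $t\to\infty$, then $\epsilon\downarrow0$) using the monotone dependence of $U_\tau$ on $\tau$.
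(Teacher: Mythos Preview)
Your proposal is correct and follows essentially the same route as the paper: set $w=S+I$, use $d_S=d_I$ to reduce to the heat equation so that $w\to N/|\Omega|$, then trap $I$ between solutions of the auxiliary problems \eqref{thm-glo2-1} with parameters $\tau_0\pm\epsilon$, invoke Lemma \ref{glo-lem1} (with uniform persistence from \cite[Theorem 2.5]{PW19} to place the data in the admissible range), and finally send $\epsilon\downarrow0$. The only noteworthy difference is in this last step: the paper passes to the limit via elliptic regularity and Ascoli--Arzel\`a compactness of $\{U^\varepsilon\}$, whereas you use the monotone dependence $\tau\mapsto U_\tau$; your variant is slightly cleaner and avoids the compactness argument, but both reach the same conclusion by the uniqueness in Lemma \ref{glo-lem1}.
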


\begin{proof}
Let $w(x,t) = S(x,t) + I(x,t)$. Obviously, $w$ satisfies
\begin{equation}
\left\{ \begin{array}{llll}
w_t = d_S \Delta w,&x\in\Omega,\,t>0,\\ \noalign {\vskip 4pt}
\frac{\partial w}{\partial \nu} =0,&x\in\partial\Omega,\,t>0,\\ \noalign {\vskip 4pt}
w(x,0) =S_0(x) + I_0(x),&x\in\Omega,\\ \noalign {\vskip 4pt}
\int_\Omega w =N,&t>0.
\end{array}\right.
\nonumber
\end{equation}
Therefore, as $t\to\infty$, $w(x,t) \to \frac{N}{|\Omega|}$ uniformly for $x\in\overline\Omega$. So,  for any small $\varepsilon>0$, there exists some $T=T(\varepsilon)>0$ such that
\begin{equation}
\frac{N}{|\Omega|}- I(x,t) - \varepsilon < S(x,t) < \frac{N}{|\Omega|}- I(x,t) + \varepsilon,
\label{thm-glo2-5}
\end{equation}
for all $x\in\overline\Omega$ and $t\geq T$. Consider the auxiliary problems
\begin{equation}
\left\{ \begin{array}{lll}
\bar I_t - d_I \Delta \bar I  = \beta \left(\frac{N}{|\Omega|}+\varepsilon - \bar I \right)^q \bar I^p - \gamma \bar I,&x\in\Omega,\,t>T,\\ \noalign{\vskip 3pt}
\frac{\partial \bar I}{\partial \nu} =0,&x\in\partial\Omega,\,t>T, \\ \noalign{\vskip 3pt}
\bar I(x,T) = \max_{x\in\overline\Omega}I(x,T),&x\in\Omega,
\end{array}\right.
\label{thm-glo2-6}
\end{equation}
and
\begin{equation}
\left\{ \begin{array}{lll}
\underline I_t - d_I \Delta \underline I  = \beta \left(\frac{N}{|\Omega|} - \varepsilon - \underline I \right)^q \underline I^p - \gamma \underline I,&x\in\Omega,\,t>T,\\ \noalign{\vskip 3pt}
\frac{\partial \underline I}{\partial \nu} =0,&x\in\partial\Omega,\,t>T, \\ \noalign{\vskip 3pt}
\underline I(x,T) = \min_{x\in\overline\Omega}I(x,T),&x\in\Omega,
\end{array}\right.
\label{thm-glo2-6.5}
\end{equation}
In view of the maximum principle for parabolic equations, comparing  \eqref{SIS-mass},  \eqref{thm-glo2-6} and \eqref{thm-glo2-6.5}, we find that
\begin{equation}
\underline I(x,t) \leq I(x,t) \leq \bar I(x,t),\quad (x,t)\in \overline\Omega \times (T,\infty).
\label{thm-glo2-7}
\end{equation}
It follows readily from \eqref{thm-glo2-5} and the fact $S>0$ that
\begin{equation*}
0 < \bar I(x,T) = \max_{x\in\overline\Omega}I(x,T) < \frac{N}{|\Omega|} + \varepsilon.
\label{thm-glo2-7.1}
\end{equation*}
In order to verify that the initial data in \eqref{thm-glo2-6.5} fulfills the requirement in Lemma \ref{glo-lem1} with $\tau_0 = \frac{N}{|\Omega|} - \varepsilon$, let us recall from \cite[Theorem 2.5]{PW19} that there exists some positive constant $\varepsilon_0$ depending only on $N$ such that $\liminf_{t\to\infty}S(x,t) \geq \varepsilon_0$ uniformly for $\overline\Omega$. Thus, by choosing $\varepsilon\in (0, \varepsilon_0)$ small and enlarging $T$ if necessary, this, together with \eqref{thm-glo2-5}, yields
\be
0< \underline I(x,T) = \min_{x\in\overline\Omega}I(x,T) < \frac{N}{|\Omega|} - \varepsilon.
\nonumber
\ee
Thus, we now apply Lemma \ref{glo-lem1} to problems \eqref{thm-glo2-6} and \eqref{thm-glo2-6.5} with $\tau_0 = \frac{N}{|\Omega|} + \varepsilon$ and  $\tau_0 = \frac{N}{|\Omega|} - \varepsilon$, respectively, to conclude the following uniform convergence
\begin{equation}
\lim_{t\to\infty}\underline I(x,t) = U_{\varepsilon}(x) \quad \mbox{and}\quad \lim_{t\to\infty}\bar I(x,t) = U^\varepsilon (x),
\label{thm-glo2-8}
\end{equation}
where $U^\varepsilon$ and $U_\varepsilon$ are the unique positive steady state of \eqref{thm-glo2-6} and \eqref{thm-glo2-6.5}, respectively.

  Recall also from \cite[Theorem 2.5]{PW19} that there exists $\epsilon_1>0$  such that  $\liminf_{t\to\infty}I(x,t) \geq \varepsilon_1$ uniformly for $\overline\Omega$. This allows one to infer there exists some small  $\delta>0$  such that $\left(\delta, \frac{N}{|\Omega|} - \delta\right)$ is a pair of lower-and-upper solutions to  the steady state problem of \eqref{thm-glo2-6} for   any small $\varepsilon>0$. Hence, $0<\delta \leq U^\varepsilon \leq \frac{N}{|\Omega|} - \delta $ for all $x\in\overline\Omega$ by the uniqueness of positive equilibrium recorded in Lemma \ref{glo-lem1}. Then the elliptic regularity estimates yield that $\{U^\varepsilon\}$ is bounded in $C^{2+\alpha}(\overline\Omega)$ for some $\alpha>0$. We then infer from the Ascoli-Arzel\`{a} theorem that, along a subsequence of $\varepsilon\to 0$, it holds $U^\varepsilon \to U^0\geq 0$ in $C^2(\overline\Omega)$.  By sending $\varepsilon \to 0+$, we see that $U^0$ is in fact a positive solution of the elliptic problem
\begin{equation}
\left\{ \begin{array}{ll}
-d_I \Delta u = \beta \left(\frac{N}{|\Omega|} - u\right)^q u^p - \gamma u, &x\in\Omega,\\ \noalign {\vskip 3pt}
\frac{\partial u}{\partial\nu} =0,&x\in\partial\Omega.
\end{array}\right.
\label{thm-glo2-9}
\end{equation}
In the similar fashion, we have $\lim_{\varepsilon\to 0}U_\varepsilon = U_0>0$ in $C^2(\overline\Omega)$, where $U_0$ is also a positive solution of \eqref{thm-glo2-9}. On the other hand, as in the proof of Lemma \ref{glo-lem1}, one can show that the positive solution of \eqref{thm-glo2-9} in $(0,\frac{N}{|\Omega|}]$ is unique. Then it must hold $U_0 \equiv U^0$ on $\overline{\Omega}$.

Finally, by sending $\varepsilon \to 0+$, we conclude from \eqref{thm-glo2-5}, \eqref{thm-glo2-7} and \eqref{thm-glo2-8} that
\begin{equation*}
\lim_{t\to\infty}(S(x,t),I(x,t)) = \left(\frac{N}{|\Omega|}- \tilde I(x),\tilde I(x)  \right)
\end{equation*}
uniformly for $x\in\Omega$, where $\tilde I\equiv U_0\equiv U^0$ is the unique positive solution of \eqref{thm-glo2-9}. Defining $\tilde S(x) = \frac{N}{|\Omega|} - \tilde I(x)>0$ and noting $d_S=d_I$, one can readily verify that $(\tilde S, \tilde I )$ is the unique endemic equilibrium of \eqref{SIS-mass}. The proof is thus completed.
\end{proof}

Now, we consider the case of $p=1$. System \eqref{SIS-mass} has a unique disease-free-equilibrium (DFE) $(\frac{N}{|\Omega|},0)$. According to  \cite{Wang-Zhao}, we define the basic reproduction number $\mathcal R_0$  via
\begin{equation}
\mathcal R_0 = \left(\frac{N}{|\Omega|}\right)^q \sup_{0\neq \varphi\in H^1(\Omega)}\frac{\int_{\Omega}\beta \varphi^2} {\int_{\Omega}\left(d_I|\nabla \varphi|^2 + \gamma \varphi^2 \right)}.
\label{R0}
\end{equation}
We here point out that $\mathcal R_0<1$ when $ \lambda_*>0$,  $\mathcal R_0=1$ when $ \lambda_*=0$  and $\mathcal R_0>1$ when $ \lambda_*<0$. Here and below, $\lambda_*$ is the principal eigenvalue of the eigenvalue problem
\begin{equation}
\left\{ \begin{array}{ll}
-d_I \Delta \phi + \left(\gamma - \beta \left(\frac{N}{|\Omega|}\right)^q  \right)\phi = \lambda \phi,&x\in\Omega,\\ \noalign {\vskip 3pt}
\frac{\partial \phi}{\partial \nu}=0,&x\in\partial\Omega.
\end{array}\right.
\label{eigen-prob}
\end{equation}
The DFE $(\frac{N}{|\Omega|},0)$ is locally asymptotically stable if $\mathcal R_0<1$, while it is unstable if $\mathcal R_0>1.$

Now, suppose   there exists some $r>0$ such that $\gamma(x) = r \beta(x)$ for all $x\in\overline\Omega$. It is not hard to see from \eqref{R0} that $\mathcal R_0 = \frac{1}{r}\left(\frac{N}{|\Omega|}\right)^q$. If $\mathcal R_0>1$, a constant EE $(\hat S, \hat I):=(r^{\frac{1}{q}},\frac{N}{|\Omega|}-r^{\frac{1}{q}})$ exists.

Concerning the large-time dynamics of \eqref{SIS-mass}, we have the following result.
\begin{theorem}
\label{thm-glo3}
 Let $p=1$ and let  $r>0$ be such that $\gamma(x) = r \beta(x)$ for all $x\in\overline\Omega$. Then the reaction-diffusion SIS model \eqref{SIS-mass} admits  threshold dynamics. More precisely, the unique classical solution $(S,I)$ of \eqref{SIS-mass} enjoys the following convergence properties:
\begin{itemize}
\item[\rm (i)] If $\mathcal R_0\leq 1$, then as $t\to\infty$, the following uniform convergence  on $\overline\Omega$ holds:
\be
\left(S(\cdot,t),I(\cdot,t)\right) \to \left(\frac{N}{|\Omega|},0\right).
\label{thm-glo3-res1}
\ee
\item[\rm (ii)] If $\mathcal R_0>1$, then as $t\to\infty$, the following uniform convergence  on $\overline\Omega$ holds:
\be
\left(S(\cdot,t),I(\cdot,t)\right) \to \left(r^{\frac{1}{q}},\frac{N}{|\Omega|}-r^{\frac{1}{q}}\right).
\label{thm-glo3-res2}
\ee
\end{itemize}
\end{theorem}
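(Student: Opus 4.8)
Here $p=1$ and $\gamma=r\beta$, and the plan is to exhibit a Lyapunov functional depending only on the susceptible density $S$, and then to combine LaSalle's invariance principle with the mass conservation \eqref{N}. Write $\hat S=r^{1/q}$, the common $S$-value of the DFE $\bigl(\tfrac{N}{|\Omega|},0\bigr)$ and, when $\mathcal R_0>1$, of the constant EE $(\hat S,\hat I)=\bigl(r^{1/q},\tfrac{N}{|\Omega|}-r^{1/q}\bigr)$. For $q\neq1$ put
\[
G(S)=(S-\hat S)-\frac{\hat S^{\,q}}{1-q}\bigl(S^{1-q}-\hat S^{\,1-q}\bigr),\qquad S>0,
\]
and $G(S)=S-\hat S-\hat S\ln\tfrac{S}{\hat S}$ when $q=1$; in either case $G(\hat S)=G'(\hat S)=0$ and $G''(S)=q\hat S^{\,q}S^{-q-1}>0$, so $G\ge0$. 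Since $p=1$ and $\gamma=r\beta$, the $S$-equation in \eqref{SIS-mass} reads $S_t=d_S\Delta S+\beta I\,(r-S^q)$; integrating by parts under the Neumann condition and using $\hat S^{\,q}=r$ together with the identity $(r-S^q)\bigl(1-\tfrac{r}{S^q}\bigr)=-\tfrac{(S^q-r)^2}{S^q}$ gives, for $V(t):=\int_\Omega G\bigl(S(\cdot,t)\bigr)$,
\[
\frac{d}{dt}V(t)=-d_S\int_\Omega G''(S)\,|\nabla S|^2-\int_\Omega\frac{\beta\,I\,(S^q-r)^2}{S^q}\ \le\ 0 .
\]
Starting from $t=1$, where $S(\cdot,1)>0$ on $\overline\Omega$ so that $V(1)<\infty$, the functional $V$ is nonincreasing and nonnegative, hence convergent, and $\int_1^\infty\bigl(-\tfrac{d}{dt}V(t)\bigr)\,dt<\infty$.

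Next I would run LaSalle. By the uniform Schauder bound \eqref{reg-SI} the orbit $\{(S(\cdot,t),I(\cdot,t)):t\ge1\}$ is precompact in $C^2(\overline\Omega)\times C^2(\overline\Omega)$, so its $\omega$-limit set $\omega$ is nonempty, compact, connected and invariant; moreover $V\equiv V_\infty:=\lim_tV$ on $\omega$, whence $\tfrac{d}{dt}V\equiv0$ there, i.e. $\nabla S\equiv0$ and $I\,(S^q-r)^2\equiv0$ at every point of $\omega$. Analysing this invariant set with the help of the $S$-equation, the conservation law \eqref{N}, and the fact that a bounded orbit of the Neumann heat semigroup on $\Omega$ is necessarily spatially constant, one finds that $\omega$ is contained in the two equilibria $\bigl(\tfrac{N}{|\Omega|},0\bigr)$ and $\bigl(r^{1/q},\tfrac{N}{|\Omega|}-r^{1/q}\bigr)$, the latter being an admissible (nonnegative) state only when $\tfrac{N}{|\Omega|}\ge r^{1/q}$. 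Since $\omega$ is connected it must be a single equilibrium, and precompactness upgrades this to convergence of $(S(\cdot,t),I(\cdot,t))$ to that equilibrium uniformly on $\overline\Omega$.

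It then remains to decide which equilibrium according to $\mathcal R_0$. Here $\mathcal R_0=\tfrac1r\bigl(\tfrac{N}{|\Omega|}\bigr)^q$, so $\mathcal R_0\le1\iff \tfrac{N}{|\Omega|}\le r^{1/q}=\hat S$. In case (i) this makes $\tfrac{N}{|\Omega|}-\hat S\le0$, so the endemic candidate is not a nonnegative state (and coincides with the DFE when $\mathcal R_0=1$); since $I>0$ on $\overline\Omega\times(0,\infty)$, necessarily $\omega=\bigl\{(\tfrac{N}{|\Omega|},0)\bigr\}$, which is \eqref{thm-glo3-res1}. In case (ii), $\mathcal R_0>1$ gives $\tfrac{N}{|\Omega|}>\hat S$, so both equilibria are admissible and I would exclude the DFE via the uniform persistence of \eqref{SIS-mass} known to hold precisely when $p=1$ and $\mathcal R_0>1$ (cf. \cite{PW19}): there is $\varepsilon_1>0$ with $\liminf_{t\to\infty}\min_{\overline\Omega}I(\cdot,t)\ge\varepsilon_1$, which rules out any $\omega$-point with $I\equiv0$; hence $\omega=\bigl\{(r^{1/q},\tfrac{N}{|\Omega|}-r^{1/q})\bigr\}$, which is \eqref{thm-glo3-res2}.

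The main obstacle is the infinite-dimensional LaSalle step, and two details inside it need care. First, because $G$ blows up at $S=0$ when $q\ge1$, one must note that the a priori bound $V(t)\le V(1)<\infty$ by itself keeps $S$ bounded away from $0$ along the orbit, so the functional and its dissipation identity stay meaningful (for $q<1$ this is automatic). Second, LaSalle only yields that $\omega$ lies in the largest invariant subset of $\{\nabla S\equiv0,\ I(S^q-r)^2\equiv0\}$; it is the interplay of connectedness of $\omega$, the conservation law \eqref{N}, and — in case (ii) — uniform persistence that collapses this set to a single equilibrium and thereby converts the LaSalle conclusion into convergence of the whole trajectory. Everything else reduces to routine parabolic estimates and interpolation built on \eqref{reg-SI}.
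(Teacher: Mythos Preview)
Your proof is correct and takes a genuinely different route from the paper.

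The paper treats the two cases with two separate quadratic-type Lyapunov functionals: for $\mathcal R_0\le1$ it uses
\[
V_3(S,I)=\int_\Omega\Bigl[\tfrac12\bigl(S-\tfrac{N}{|\Omega|}\bigr)^2+\bigl(r^{1/q}-\tfrac{N}{|\Omega|}\bigr)I\Bigr],
\]
and for $\mathcal R_0>1$ it uses
\[
V_4(S,I)=\int_\Omega\Bigl[\tfrac12\bigl(S-\hat S+I-\hat I\bigr)^2+\tfrac{(d_S+d_I)^2}{8d_Sd_I}\bigl(S-\hat S\bigr)^2\Bigr],
\]
both of which yield the dissipation $-d_S\int_\Omega|\nabla S|^2-\int_\Omega\beta I(S^q-r)(S-r^{1/q})$ (up to positive multiples). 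You instead use a \emph{single} functional $V=\int_\Omega G(S)$ built only from $S$, essentially the $S$-half of the functional the paper employs for Theorem~\ref{thm-glo1}; this is more economical and handles both cases at once, with the pleasant feature that the dissipation term $\int_\Omega \beta I(S^q-r)^2/S^q$ is manifestly a square. The price is that for $q\ge1$ your $G$ is singular at $S=0$, so you must argue (as you note) that the orbit stays away from $\{S=0\}$; this follows from the bound $V(t)\le V(1)$ combined with the uniform H\"older regularity \eqref{reg-SI}, but it is an extra step the paper's quadratic functionals avoid. For case~(ii) you invoke uniform persistence from \cite{PW19} to exclude the DFE; the paper does the same but also supplies a short self-contained comparison argument (if $S\to N/|\Omega|>r^{1/q}$ then eventually $I_t\ge d_I\Delta I+\varepsilon_0(\min_\Omega\beta)I$, forcing $I\to\infty$), which you could easily incorporate to make the proof independent of \cite{PW19}.
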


\begin{proof}
(i) Since $\mathcal R_0\leq 1$ implies $r^{\frac{1}{q}}\geq \frac{N}{|\Omega|}$, we define the Lyapunov functional
\begin{equation*}
V_3(S,I) = \int_\Omega \left[\frac{1}{2}\left(S- \frac{N}{|\Omega|}\right)^2 + \left(r^{\frac{1}{q}} - \frac{N}{|\Omega|}\right) I \right].
\end{equation*}
Then we compute from \eqref{SIS-mass} that
\begin{align*}
\frac{d}{dt}V_3(S,I)& = \int_\Omega \left(S- \frac{N}{|\Omega|}\right)(d_S \Delta S -\beta S^q I + r\beta I) \\
& \ + \left(r^{\frac{1}{q}} - \frac{N}{|\Omega|}\right) \int_\Omega (d_I \Delta I + \beta S^q I - r \beta I)\\
& = -d_S \int_\Omega |\nabla S|^2 - \int_\Omega \beta I \left(S^q - r \right) \left(S - r^{\frac{1}{q}}\right) \leq 0,
\end{align*}
which  upon integration and the boundedness of $(S,I)$  implies
\be
\nonumber
\int_1^\infty \int_\Omega \left[d_S |\nabla S|^2+ \beta I \left(S^q-r\right)\left(S- r^\frac{1}{q}\right) \right]<\infty.
\ee
The regularity in \eqref{reg-SI} along with Ascoli-Arzel\`{a} theorem implies there exists $t_k\rightarrow \infty$ and nonnegative $C^2$-function $(S_\infty, I_\infty)$ such that $(S(\cdot, t_k), I(\cdot, t_k))\rightarrow (S_\infty,I_\infty)$ as $k\rightarrow \infty$ in $C^2(\overline{\Omega})$. On the other hand, since the function
$$t \mapsto \int_\Omega \left[d_S |\nabla S|^2+ \beta I \left(S^q-r\right)\left(S - r^\frac{1}{q}\right) \right]$$
 is  uniformly bounded and uniformly continuous, we infer  that
\be
\label{thm-glo3-2}
\int_\Omega|\nabla S_\infty|^2 + \int_\Omega \beta I_\infty \left(S_\infty^q- r \right)
\left(S_\infty - r^\frac{1}{q}\right) =0,
\ee
which entails either $S_\infty\equiv r^{1/q}$ or $I_\infty\equiv 0$. If $S_\infty\equiv r^{1/q}$, then we infer from the conservation of total population \eqref{N} that $I_\infty = \frac{N}{|\Omega|} - r^{\frac{1}{q}}$, which is non-positive since $\mathcal R_0\leq 1$. Thus, we must have $I_\infty \equiv 0$ and consequently $S_\infty \equiv \frac{N}{|\Omega|}$.  Since this type of argument can be applied to every subsequence of $\{t\}_{t\geq1}$, \eqref{thm-glo3-res1} follows readily.

(ii) Recall that $(\hat S,\hat I):=(r^{\frac{1}{q}},\frac{N}{|\Omega|}-r^{\frac{1}{q}})$ is an EE. Define
\be
V_4(S,I) = \int_\Omega \left[\frac{1}{2}\left(S- \hat S +I -\hat I \right)^2 + \frac{(d_S + d_I)^2}{8d_S d_I}\left(S - \hat S\right)^2   \right].
\nonumber
\ee
Then
\begin{align}
\frac{d}{dt}V_4(S,I) & =\int_\Omega \left(S- \hat S + I -\hat I\right) \left(d_S \Delta S + d_I \Delta I\right) \nonumber\\
&\quad \ + \frac{(d_S + d_I)^2}{4d_S d_I} \int_\Omega \left(S- \hat S\right) \left(d_S \Delta S - \beta S^qI + r\beta I\right) \nonumber\\
& = -d_S \int_\Omega |\nabla S|^2 -\int_\Omega \left|\frac{d_S +d_I}{2 \sqrt{d_I}}\nabla S + \sqrt{d_I}\nabla I \right|^2 \nonumber\\
& \quad \ -\frac{(d_S + d_I)^2}{4d_S d_I}\int_\Omega \beta I \left(S- \hat S\right) \left(S^q - \hat S^q \right)  \leq 0,\nonumber
\end{align}
which implies that \eqref{thm-glo3-2} holds for some limiting  $C^2$-nonnegative functions  $(S_\infty,I_\infty)$ of $(S,I)$.
Hence, as argued above, we see that
\be\label{limt-choice}
(S_\infty,I_\infty)\in\left\{\left(\frac{N}{|\Omega|}, 0\right), \ \left(\hat S,\hat I\right) \right\}.
\ee
(Note that the limiting functions must be unique since $\lim_{t\to\infty}V_4(S,I)$ exists and $V_4(\frac{N}{|\Omega|},0) \neq V_4(\hat S,\hat I)$.)
Since $\mathcal R_0>1$, the former case can be eliminated by the uniform persistence property of $(S,I)$ in \cite[Theorem 2.5]{PW19}, which is obtained by abstract dynamical system theory. Nevertheless, here we would like to present an elementary argument to rule out this possibility. In fact, suppose $(S_\infty, I_\infty) = (\frac{N}{|\Omega|},0)$. Observe $S_\infty = \frac{N}{|\Omega|} > r^{\frac{1}{q}}$ since $\mathcal R_0>1$. Then there exists some positive constant $\varepsilon_0$ and $T>0$ such that
$$
S^q - r \geq \varepsilon_0>0\ \ \text{on}\ \  \overline{\Omega} \times[T, \infty).
$$
Hence, by the $I$-equation in \eqref{SIS-mass}, we find that
\begin{equation*}
\begin{cases}
I_t\geq d_I\Delta I + \varepsilon_0 \left(\min_{\overline\Omega}\beta\right) I,& x\in\Omega, \, t>T,\\[3pt]
\frac{\partial I}{\partial \nu}=0, & x\in \partial \Omega, \, t>T,\\[3pt]
 I(x,T)\geq  \min_{x\in\overline\Omega} I(x, T), & x\in \Omega,
\end{cases}
\end{equation*}
which upon comparison with the corresponding ODE shows, for $(x,t)\in \overline{\Omega}\times[T,\infty)$, that
$$
I(x,t)\geq \min_{x\in\overline{\Omega}}I(x,T)\exp\left\{\varepsilon_0 \left(\min_{\overline\Omega}\beta\right) \left(t-T\right)\right\}.
$$
This clearly is incompatible with the boundedness of $I$. Then the desired uniform convergence in \eqref{thm-glo3-res2} follows from \eqref{limt-choice}.
\end{proof}

Finally, we  turn our attention to the scenario of equal diffusion rates.

\begin{theorem}
\label{thm-glo4}
Suppose $p=1$ and $d_S =d_I$. Then the reaction-diffusion SIS model \eqref{SIS-mass} has the following   threshold dynamics:
\begin{itemize}
\item[\rm (i)] If $\mathcal R_0\leq 1$, then as $t\to\infty$,  the following uniform convergence  on $\overline\Omega$ holds:
\be
\left(S(\cdot,t),I(\cdot,t)\right) \to \left(\frac{N}{|\Omega|},0\right).
\label{thm-glo4-res1}
\ee
\item[\rm (ii)] If $\mathcal R_0>1$, then as $t\to\infty$,  the following uniform convergence  on $\overline\Omega$ holds:
\be
\left(S(\cdot,t),I(\cdot,t)\right) \to \left(\tilde S, \tilde I\right),
\nonumber
\ee
 where $(\tilde S,\tilde I)$ is the unique EE of \eqref{SIS-mass}.
\end{itemize}
\end{theorem}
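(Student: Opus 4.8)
The plan is to reduce \eqref{SIS-mass} with $d_S=d_I=:d$ to a scalar logistic-type equation via conservation of the total density, then analyse that equation by monotone-dynamics and threshold-eigenvalue arguments, in the spirit of the proof of Theorem~\ref{thm-glo2}. First, $w:=S+I$ solves $w_t=d\Delta w$ with homogeneous Neumann data and $\int_\Omega w\equiv N$, hence $w(\cdot,t)\to N/|\Omega|$ uniformly on $\overline\Omega$. Thus for each $\varepsilon>0$ there is $T=T(\varepsilon)>1$ with $\frac{N}{|\Omega|}-I-\varepsilon<S<\frac{N}{|\Omega|}-I+\varepsilon$ and $0<I<\frac{N}{|\Omega|}+\varepsilon$ on $\overline\Omega\times[T,\infty)$. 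Since $p=1$, inserting these bounds into the $I$-equation of \eqref{SIS-mass} shows that on $\overline\Omega\times[T,\infty)$ the function $I$ is a subsolution of the $\bar I$-problem and a supersolution of the $\underline I$-problem, where, with Neumann data,
\be\label{thm-glo4-aux}
\bar I_t-d\Delta\bar I=\beta\bigl(\tfrac{N}{|\Omega|}+\varepsilon-\bar I\bigr)^q\bar I-\gamma\bar I,\quad
\underline I_t-d\Delta\underline I=\beta\bigl(\tfrac{N}{|\Omega|}-\varepsilon-\underline I\bigr)^q\underline I-\gamma\underline I,
\ee
with $\bar I(\cdot,T)=\max_{\overline\Omega}I(\cdot,T)$ and $\underline I(\cdot,T)=\min_{\overline\Omega}I(\cdot,T)$; the parabolic comparison principle then yields $\underline I\le I\le\bar I$ on $\overline\Omega\times[T,\infty)$.

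Next I would prove a $p=1$ analogue of Lemma~\ref{glo-lem1} for $u_t=d\Delta u+\beta(\tau_0-u)^qu-\gamma u$ with Neumann data on $[0,\tau_0]$. Let $\lambda_*(\tau_0)$ be the principal eigenvalue of $-d\Delta+\gamma-\beta\tau_0^q$, so $\lambda_*(N/|\Omega|)=\lambda_*$ and $\lambda_*(\tau_0)$ is strictly decreasing in $\tau_0$. If $\lambda_*(\tau_0)\ge0$, then $0$ is the only nonnegative steady state and every solution with $0\le u_0\le\tau_0$ tends to $0$ uniformly; nonexistence of a positive steady state follows by testing it against itself, giving $d\int_\Omega|\nabla u|^2=\int_\Omega u^2[\beta(\tau_0-u)^q-\gamma]<\int_\Omega u^2[\beta\tau_0^q-\gamma]$, and comparing with the variational inequality $d\int_\Omega|\nabla u|^2+\int_\Omega(\gamma-\beta\tau_0^q)u^2\ge\lambda_*(\tau_0)\int_\Omega u^2\ge0$. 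If $\lambda_*(\tau_0)<0$, there is a unique positive steady state $U_{\tau_0}\in(0,\tau_0)$ attracting every solution with $0<\inf_\Omega u_0$ and $\|u_0\|_{L^\infty}\le\tau_0$: existence comes from the ordered sub/supersolution pair $(\delta\phi,\tau_0)$ with $\phi>0$ the principal eigenfunction and $\delta$ small, using $-\lambda_*(\tau_0)+\beta[(\tau_0-\delta\phi)^q-\tau_0^q]>0$; uniqueness from testing the equations of the minimal and maximal solutions $u_1\le u_2$ against $u_2$ and $u_1$ and subtracting, giving $\int_\Omega\beta u_1u_2[(\tau_0-u_1)^q-(\tau_0-u_2)^q]=0$ and hence $u_1\equiv u_2$ since $s\mapsto(\tau_0-s)^q$ is strictly decreasing; and global attractivity from squeezing between the monotone trajectories issuing from $\delta\phi$ and from $\tau_0$, which converge to $U_{\tau_0}$ by monotone convergence and elliptic regularity.

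In case (i), $\mathcal R_0\le1$ means $\lambda_*\ge0$. For each small $\varepsilon>0$, $\bar I(\cdot,t)$ converges to the largest nonnegative steady state $\Psi^\varepsilon$ of the first equation in \eqref{thm-glo4-aux} — namely $0$ when $\lambda_*(\tfrac{N}{|\Omega|}+\varepsilon)\ge0$, and $U^\varepsilon:=U_{N/|\Omega|+\varepsilon}$ otherwise. Since $\{U^\varepsilon\}$ is bounded in $C^{2+\alpha}(\overline\Omega)$, along $\varepsilon\to0$ it converges (after a subsequence) to a nonnegative steady state of $d\Delta u+\beta(\tfrac{N}{|\Omega|}-u)^qu-\gamma u=0$ on $[0,\tfrac{N}{|\Omega|}]$, which is $\equiv0$ by the nonexistence statement above (valid as $\lambda_*\ge0$); hence $\Psi^\varepsilon\to0$ and $\limsup_{t\to\infty}\|I(\cdot,t)\|_{L^\infty}\le\|\Psi^\varepsilon\|_{L^\infty}\to0$, so $I(\cdot,t)\to0$ uniformly. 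Since $S=w-I$ and $w(\cdot,t)\to N/|\Omega|$, also $S(\cdot,t)\to N/|\Omega|$ uniformly, proving \eqref{thm-glo4-res1}.

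In case (ii), $\mathcal R_0>1$ means $\lambda_*<0$, hence $\lambda_*(\tfrac{N}{|\Omega|}\pm\varepsilon)<0$ for all small $\varepsilon>0$, so both equations in \eqref{thm-glo4-aux} have unique positive steady states $U^\varepsilon,U_\varepsilon$ with $\bar I(\cdot,t)\to U^\varepsilon$ and $\underline I(\cdot,t)\to U_\varepsilon$ uniformly. Elliptic regularity bounds $\{U^\varepsilon\},\{U_\varepsilon\}$ in $C^{2+\alpha}(\overline\Omega)$; as $\varepsilon\to0$ every limit point is a positive solution of $d\Delta u+\beta(\tfrac{N}{|\Omega|}-u)^qu-\gamma u=0$, and that solution is unique (again by subtracting and strict monotonicity of $s\mapsto(\tfrac{N}{|\Omega|}-s)^q$), call it $\tilde I$, so $U^\varepsilon,U_\varepsilon\to\tilde I$. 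With $\underline I\le I\le\bar I$ this forces $I(\cdot,t)\to\tilde I$, and then $S=w-I\to\frac{N}{|\Omega|}-\tilde I=:\tilde S$ uniformly; since $d_S=d_I$ makes $\tilde S+\tilde I$ the constant $N/|\Omega|$, a direct check shows $(\tilde S,\tilde I)$ is the unique endemic equilibrium of \eqref{SIS-mass}. I expect the main obstacle to be the critical case $\mathcal R_0=1$ ($\lambda_*=0$), where linear comparison no longer yields decay and one must use the genuine sublinearity $\tau_0^q-(\tau_0-s)^q>0$ through the perturbed states $U^\varepsilon$ and their vanishing as $\varepsilon\to0$, together with the global asymptotic stability of $U_{\tau_0}$ in \eqref{thm-glo4-aux} — the $p=1$ analogue of Lemma~\ref{glo-lem1} — which relies on the monotone-semiflow squeezing and the integral-identity uniqueness; the remaining steps (the $w$-reduction, the comparison sandwich, and the passage $\varepsilon\to0$) parallel the proofs of Theorems~\ref{thm-glo2} and \ref{thm-glo3}.
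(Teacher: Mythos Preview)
Your proposal is correct and follows essentially the same route as the paper's proof: reduce to a scalar equation via $w=S+I\to N/|\Omega|$, sandwich $I$ between the solutions $\underline I,\bar I$ of the $\varepsilon$-perturbed problems \eqref{thm-glo2-6}--\eqref{thm-glo2-6.5} with $p=1$, invoke a $p=1$ analogue of Lemma~\ref{glo-lem1} governed by the sign of the principal eigenvalue $\lambda_*(\tau_0)$, and pass to the limit $\varepsilon\to0$ using elliptic regularity and the uniqueness of the limiting steady state. The only cosmetic difference is that the paper treats $\mathcal R_0<1$ and $\mathcal R_0=1$ separately (showing directly $\lambda_\varepsilon>0$ in the former and constructing $I^\varepsilon\to0$ in the latter), whereas you handle both under the single umbrella ``$\Psi^\varepsilon\to0$''; the underlying arguments are identical.
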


\begin{proof}
(i) Firstly, suppose $\mathcal R_0<1$. Then $\lambda_*>0$, where $\lambda_*$ is the principal eigenvalue of \eqref{eigen-prob}. As in the reasoning in Theorem \ref{thm-glo2}, we are led to the auxliary problems \eqref{thm-glo2-6} and \eqref{thm-glo2-6.5} (with $p=1$). To analyze the large-time dynamics of the unique solution to \eqref{thm-glo2-6}, for any small $\varepsilon>0$,  we consider the eigenvalue problem
\begin{equation}
\left\{ \begin{array}{ll}
-d_I \Delta \phi + \left(\gamma - \beta \left(\frac{N}{|\Omega|} + \varepsilon \right)^q  \right)\phi = \lambda \phi,&x\in\Omega,\\ \noalign {\vskip 3pt}
\frac{\partial \phi}{\partial \nu}=0,&x\in\partial\Omega,
\end{array}\right.
\nonumber
\end{equation}
whose principal eigenvalue is denoted by $\lambda_\varepsilon$. It follows from $\lambda_*>0$ that $\lambda_\varepsilon>0$ for small $\varepsilon>0$. We then claim that \eqref{thm-glo2-6} does not admit non-negative non-trivial steady state in the interval $[0,\frac{N}{|\Omega|}+\varepsilon]$. In fact, suppose $I_\varepsilon \geq 0,\, \not\equiv0$ is such a steady state. Then we derive from the variational formulation of $\lambda_\varepsilon$ and the equation satisfied by $I_\varepsilon$ that
\be\label{thm-glo4-0}
\begin{split}
\lambda_\varepsilon &= \inf_{0\neq \phi\in H^1(\Omega)} \frac{\int_\Omega \left[ d_I |\nabla \phi^2| + \left(\gamma - \beta \left(\frac{N}{|\Omega|}+\varepsilon \right)^q  \right) \phi^2 \right]}  {\int_\Omega \phi^2} \\
& \leq \frac{\int_\Omega \left[ d_I |\nabla I_\varepsilon^2| + \left(\gamma - \beta \left(\frac{N}{|\Omega|}+\varepsilon \right)^q  \right) I_\varepsilon^2 \right]}  {\int_\Omega I_\varepsilon^2} \\
& = \frac{\int_\Omega \beta I_\varepsilon^2 \left[\left(\frac{N}{|\Omega|}+\varepsilon - I_\varepsilon \right)^q  - \left(\frac{N}{|\Omega|}+\varepsilon \right)^q  \right]}  {\int_\Omega I_\varepsilon^2} \\
&<0,
\end{split}
\ee
contradicting $\lambda_\varepsilon>0$. As a result, $\lim_{t\to\infty}\bar I(\cdot,t) =0$ uniformly on $\overline \Omega$. Then we derive from \eqref{thm-glo2-7} that $\lim_{t\to\infty}I(\cdot,t) =0$ and therefore $\lim_{t\to\infty}S(\cdot,t) = \frac{N}{|\Omega|}$ uniformly on $\overline\Omega$.

Now suppose $\mathcal R_0=1$ or equivalently  $\lambda_*=0$. This coupled with \eqref{thm-glo4-0} shows  $\lambda_\varepsilon <0$ for small $\varepsilon>0$. Let $\phi_\varepsilon>0$ be the corresponding principal eigenfunction, normalized by $\|\phi_\varepsilon\|_{L^2} =1$. For the steady state problem of \eqref{thm-glo2-6}, one can choose small $\tau>0$ and use $\tau \phi_\varepsilon$ as a lower solution and $\frac{N}{|\Omega|} + \varepsilon$ as an upper solution to establish the existence of a positive solution $I^\varepsilon$ in $(0,\frac{N}{|\Omega|}+\varepsilon)$. The uniqueness and global asymptotic stability can be proved similarly as in the argument of Lemma \ref{glo-lem1}. Thus, we have
\be
 0 \leq \limsup_{t\to\infty}I(\cdot,t) \leq \lim_{t\to\infty}\bar I(\cdot,t) = I^\varepsilon(\cdot),
\label{thm-glo4-1}
\ee
uniformly on $\overline\Omega$. On the other hand, notice that $\{I^\varepsilon\}$ is uniformly bounded for all small $\varepsilon>0$. As before, along a subsequence of $\varepsilon\to0$, it holds $I^\varepsilon\to I^0 \geq 0$ in $C^2(\overline\Omega)$. The fact  $\lambda_*=0$ enforces that the limiting function $I^0$ must be zero, since otherwise one can easily derive a contradiction as in \eqref{thm-glo4-0}. Thus, \eqref{thm-glo4-res1} follows from \eqref{thm-glo4-1} and \eqref{thm-glo2-5}.

(ii) Since $\mathcal R_0>1$, we have $\lambda_*<0$ and $\lambda_\varepsilon<0$ for all small $\varepsilon>0$. Then both \eqref{thm-glo2-6} and \eqref{thm-glo2-6.5} admit a unique positive equilibrium which is globally asymptotically stable. The rest of the argument is quite similar to that of Theorem \ref{thm-glo2} and hence is omitted.
\end{proof}

\begin{remark}
\label{gs-rk}
For model \eqref{SIS-mass} with $0<p<1$, our findings in Theorems {\rm \ref{thm-glo1}} and {\rm \ref{thm-glo2}} strengthen the results in \cite[Theorem 2.5]{PW19}, where only uniform persistence property of solutions is shown. For \eqref{SIS-mass} with $p=1$, the global stability results in Theorems {\rm \ref{thm-glo3}} and {\rm \ref{thm-glo4}} confirm the conjecture in \cite[Remark 2.3 (i)]{PW19} and thus extends \cite[Theorem 4.1]{DW16}.
\end{remark}

{\small
\section*{Acknowledgment}

H. Li was partially supported by National Key R\&D Program of China (No. 2021YFA1002100), NSF of China (No. 11971498), the project of Guangzhou Science, Technology and Innovation Commission (No. 202102020807).  T. Xiang was partially supported by NSF of China (Nos. 12071476  and 11871226).

The authors would like to thank Prof. Rui Peng from Jiangsu Normal University, China for many useful discussions.
}

\end{document}